\DeclareRobustCommand{\nick}[1]{
\ifthenelse{\boolean{show_comments}}
{\begingroup\color{Blue}{[\textbf{Nick:} #1]}\endgroup}
{}
}
\DeclareRobustCommand{\maxim}[1]{
\ifthenelse{\boolean{show_comments}}
{\begingroup\color{Green}{[\textbf{Maxim:} #1]}\endgroup}
{}
}
\numberwithin{equation}{section}
\newtheorem{proposition}{Proposition}[section]
\newtheorem{theorem}[proposition]{Theorem}
\newtheorem{lemma}[proposition]{Lemma}
\newtheorem{definition}[proposition]{Definition}
\theoremstyle{definition}
\newtheorem{remark}[proposition]{Remark}
\newtheorem{example}{Example}
\renewenvironment{proof}{\smallskip\noindent\emph{\textbf{Proof.}}%
  \hspace{1pt}}{\hspace{-5pt}{\nobreak\quad\nobreak\hfill\nobreak%
    $\square$\vspace{2pt}\par}\smallskip\goodbreak}
\newenvironment{proofof}[1]{\smallskip\noindent{\textbf{Proof~of~#1.}}%
  \hspace{1pt}}{\hspace{-5pt}{\nobreak\quad\nobreak\hfill\nobreak%
    $\square$\vspace{2pt}\par}\smallskip\goodbreak}
\newcommand{\C}[1]{\mathbf{C}^{#1}}
\newcommand{\Cc}[1]{\mathbf{C}_c^{#1}}
\newcommand{\BV}{\mathbf{BV}}
\renewcommand{\L}[1]{{\mathbf{L}^#1}}
\newcommand{\Lw}[1]{{\mathbf{L}_{w}^{#1}}}
\newcommand{\Lip}{\mathbf{Lip}}
\newcommand{\norma}[1]{{\left\|#1\right\|}}
\renewcommand{\epsilon}{\varepsilon}
\renewcommand{\phi}{\varphi}
\newcommand{\spt}{\mathop{\rm spt}}
\newcommand{\wto}{\overset{\ast}{\rightharpoonup}}
\renewcommand{\d}[1]{\mathinner{\mathrm{d}{#1}}}
\newcommand{\id}{\mathbf{id}}
\def\Var{\mathop{\rm Var}\nolimits}
\begin{document}

\title{Impulsive control of nonlocal transport equations}
\date{}

\author{Nikolay Pogodaev\footnote{Matrosov Institute for System
    Dynamics and Control Theory, 134 Lermontov str., Irkutsk, 664033, Russia.  
    Email: \href{mailto:n.pogodaev@icc.ru}{n.pogodaev@icc.ru}, \href{mailto:starmax@icc.ru}{starmax@icc.ru}}
\footnote{Krasovskii Institute of Mathematics and Mechanics, 16 Kovalevskay str., Yekaterinburg, 620990, Russia.}
\footnote{Corresponding author}
   \and 
   Maxim Staritsyn$^*$
}

\maketitle

\begin{abstract}

  \noindent The paper extends an impulsive control-theoretical framework towards dynamic systems in the space of measures. We consider a transport equation describing the  time-evolution of a conservative ``mass'' (probability measure), which represents an infinite ensemble of interacting particles.  The driving vector field contains nonlocal terms and it is affine in the control variable.
  The control is assumed to be common for all the agents, i.e., it is a function of time variable only. The main feature of the addressed model is the admittance of ``shock'' impacts, i.e. controls, whose influence on each agent can be arbitrary close to Dirac-type distributions. We construct an impulsive relaxation of this system and of the corresponding optimal control problem. For the latter we  establish a necessary optimality condition in the form of Pontryagin's Maximum Principle.

  \medskip

  \noindent\textit{2000~Mathematics Subject Classification:} 49K20, 49J45, 93C20

  \medskip

  \noindent\textit{Keywords:} optimal control, impulsive control, multi-agent systems, nonlocal transport equations, necessary optimality conditions, Pontryagin's Maximum Principle 
\end{abstract}

\section{Introduction}\label{sec:intro}

Transport equations with nonlocal terms are extensively studied in the recent years. This is mainly inspired by two reasons. On one hand, such equations give a natural concept of 
dynamic system in the space of probability measures. On the other hand, they can often be viewed as certain limits of ordinary systems describing ensembles of indistinguishable interacting ``particles'' (multi-agent systems) as the number of particles tends to infinity. This feature entails the appearance of nonlocal transport equations when modeling collective behaviour in mathematical biology and social science \cite{CuckerSmale, Mogilner1999, Carrillo2010, Carrillo2014, Colombo2011}. 

Along with modeling issues, various control problems for transport equations (including optimal control and dynamic games) naturally arise. A significant progress in analyzing such problems
has been made in the last few years, mainly thanks to modern achievements in geometry and analysis on metric spaces of probability measures \cite{AGS,Villani}. The existing works are mainly concentrated in two directions: one collection of studies is devoted to necessary optimality conditions \cite{Bongini2017,Bonnet2019, BonnetRossi2019,Colombo2011,Pogodaev2016}, another part is focused on the dynamic programming approach~\cite{Cavagnari2018,Marigonda2019,Averboukh2018-1,Averboukh2018-2}. In all cited papers, the driving vector field is assumed to be $\L\infty$ bounded in time variable, which makes the problem relatively regular. On the other hand, in some applications it is reasonable to deal with ``unbounded'' vector fields (e.g., constrained in $\L1$). For example, 
in opinion formation models \cite{Albi2014,Toscani2006}, different types of ``shock'' events (financial defaults, acts of terrorism etc.) are sometimes unavoidable 
and should be taken into account. These shocks can produce ``almost discontinuous'' trajectories of the modelled dynamic processes; here, the reader can easily imagine (or remember) a situation when the public opinion, social environment or economic indicators change dramatically in a very short time period. Our goal is to investigate such ``impulsive'' phenomena in the control-theoretical context. 

More precisely, we study the transport equation
\begin{equation}
\label{eq:ce}
\partial_t \, \bm\mu_t + \nabla \cdot
\left( v_t \, \bm\mu_t\right) = 0, \quad t\in [0,T], \quad \bm\mu_0=\vartheta,
\end{equation}
actuated by a nonlocal vector field
\begin{equation}
\label{eq:vfield}
  v_t(x) = f_0(x) + \sum_{i=1}^m u_i(t) \, f_i(x) + (g* \bm\mu_t)(x).
\end{equation}
Here, $t \mapsto \bm \mu_t$ is a one-parametric family of probability measures, $[0,T]$ is a given time interval, and $\vartheta$ is a given initial probability measure; $g*\mu$ denotes the convolution of a function $g$ and a measure $\mu$, defined by
$$
(g*\mu)(x)\doteq \int_{\mathbb{R}^n} g(x-y)\d\mu(y), \quad x \in \mathbb R^n.
$$
In the setup of multi-agent systems, $\bm\mu_t$ represents the portion of ``individuals'' occupying a subset of the state space $\mathbb R^n$ at a time moment $t$. 
Function  $f_0: \, \mathbb R^n \mapsto \mathbb R^n$ represents the influence of the
media (natural drift), $f_i: \, \mathbb R^n \mapsto \mathbb R^n$, $i=\overline{1,m}$,
are control vector fields, and $g: \, \mathbb R^n \mapsto \mathbb R^n$  models the
communication of the agents.


The controls $u=(u_1, \ldots, u_m)$ 
are chosen from the class $\mathcal U$ of Borel measurable 
essentially bounded functions $[0,T]\mapsto \mathbb R^m$,
which are integrally constrained as follows:
\begin{equation}
    \label{eq:adm}
    F_{|u|}(T)\doteq  \int_0^T \sum_{i=1}^m \left|u_i(t)\right|\d t= M
  \end{equation}
with a given constant $M>0$ (hereinafter, $F_{f}(t)\doteq \int_0^tf(s)\d s$ denotes 
 the cumulative distribution of $f$ on $[0,T]$; we agree that $F_f(0^-)=0$,
 where $F(t^-)$ identifies the left one-sided limit of a function $F$ at a point
 $t$).

 \begin{remark} In the forthcoming construction of the impulsive relaxation of \eqref{eq:ce}, \eqref{eq:vfield} (\S~3.2) the constraint \eqref{eq:adm} can be replaced by the (more natural) condition $F_{|u|}(T) \leq M$. This follows from the fact that the set of pointwise limits of cumulative distributions of functions $u: \, [0,T] \mapsto \mathbb R^m$, constrained in a closed ball of $\L1$, coincide with the set of such limits of the functions, constrained in a sphere of $\L1$. Indeed, since controls are assumed to take values in the whole $\mathbb R^m$ (i.e. are not conically bounded), any function $u$ with $F_{|u|}(T) < M$ can be approximated by a sequence $\{u^k\}$ such that $F_{u^k} \to F_{u}$ pointwise, while $F_{|u^k|}(T)\to M$ as $k \to \infty$. This can be done, e.g., by spending the ``remaining resource'' $M_u\doteq M-F_{|u|}(T)$ of $u$ through an approximation of a ``fictitious impulse'' in an arbitrary component $u_i$ of $u$ at $t=0$: $u_i^k(t)=u_i(t)+M_u \, k$, $t \in [0, \frac{1}{2k}]$, $u_i^k(t)=u_i(t)-M_u \, k$, $t \in (\frac{1}{2k}, \frac{1}{k}]$, $u_i^k\equiv u_i$ on $(\frac{1}{k}, T]$, and $u_j^k\equiv u_j$ if $i\neq j$, for all $k$. 

 \end{remark}

Our choice of the set of admissible controls leads to the ill-posedness of the model (\ref{eq:ce})--(\ref{eq:adm}) in the sense that the tube of its trajectories is not closed in the space of continuous measure-valued curves, which causes, in particular, the absence of solutions to associated optimal control problems. Thus, one requires an appropriate mathematical technique for describing ``limit'' control processes with possibly discontinuous trajectories.

Such techniques were already developed for control ODEs, whose trajectories may jump (change very fast) \cite{Rishel1965,Vinter1988,Warga1965,Gurman1972,Motta-Rampazzo,Zavalishchin1997,BressanPiccoliBook,Dykhta2000} or ``vibrate'' rapidly \cite{Bressan1993}.   
The most common approach here is based on the so-called discontinuous time reparametrization \cite{Miller2003},  which ``extends'' instants of jumps of a limit trajectory into intervals and 
associates the limit trajectory to 
a trajectory of certain auxiliary ODE (the reduced equation) ``living'' on the extended time scale. In turn, the reduced equation can be transformed into an object called the \emph{generalized differential equation}, which is controlled by first-order distributions (vector-valued Borel measures) and admits discontinuous trajectories of bounded variation. 

In this paper, we adapt the time reparametrization approach to control system (\ref{eq:ce})--(\ref{eq:adm}). As a byproduct, we construct a relaxation of the Mayer type
optimal control problem
\begin{equation}
\tag{$P$}
    \inf\Big\{\int_{\mathbb{R}^n}\ell(x) \d{\bm\mu_T(x)}\;\colon\;\bm\mu\; \mbox{is a trajectory of (\ref{eq:ce})--(\ref{eq:adm})}\Big\}.
\end{equation}
This relaxation takes the form of an optimal control problem for the reduced transport equation.  Its solutions, which do exist, characterize the minimizing sequences of $(P)$.
Finally, we provide a necessary optimality condition in the form of Pontryagin's Maximum Principle for the relaxed problem.

\smallskip

The study 
extends our recent works \cite{Star,StarPogo2018} 
to the \emph{nonlocal} case. 

 
\medskip
\noindent\textbf{Outline.} The manuscript is organized as follows. The introductory Section~\ref{sec:intro} performs a very brief state of the art, and organization of the paper. Section~\ref{sec:prelim} collects notations and a few preliminaries related to measure theory (\S~\ref{sec:measure}) and nonlocal transport equations (\S~\ref{sec:PDE}). 
Section~\ref{sec:relax} is devoted to the relaxation of control system~\eqref{eq:ce}--\eqref{eq:adm} and
minimization problem $(P)$. 
In \S~\ref{sec:well-posedness}, we establish the well-posedness (i.e. the
continuous dependence of a distributional solution on both control input and
initial data) of the Cauchy problem for nonlocal transport equations with
bounded vector fields. Next, in \S~\ref{sec:time}, we reduce our pre-impulsive
model to an auxiliary PDE with measurable uniformly bounded inputs.
\S~\ref{sec:imp} exposes the actual extension of the original control system and
its specification in the tradition of impulsive control theory.
\S~\ref{sec:comm} discusses a representation of the impuslive control system in
the case of commutative control vector fields, and \S~\ref{sec:relaxed-problem}
establishes the connection between the extremal problems, stated over the
reduced and impulsive equations. The main result of the paper~--- the necessary
optimality condition for the relaxed control problem in its reduced and
impulsive representations~--- is presented in Section~\ref{sec:necessary-cond} (\S\S~\ref{sec:PMP},~\ref{sec:PMP-Imp}), preceded by some reasonings, such as the Approximate Pontryagin's Maximum Principle for the case of purely atomic initial distribution (\S\S~\ref{sec:approx-PMP}, \ref{sec:discrete}) and analysis of the Hamiltonian system (\S~\ref{sec:Ham}). In the concluding Section~\ref{sec:conclusion}, we discuss possible applications of the obtained results to numeric analysis of ensemble and mean-field control problems and mention their natural generalizations. In order to clarify the presentation, the (most bulky) proofs of two auxiliary theorems~--- the well-posedness and Approximate Maximum Principle~--- are given in Appendices A and B.

\section{Preliminaries}\label{sec:prelim}
\subsection{Notations}

\begin{longtable}{p{.17\textwidth} p{.83\textwidth}} 
$\mathbb{N}$ &  the set of positive integers\\
$\mathbb{R}^n$ & the $n$-dimensional arithmetic space \\
$|\cdot|$ & the Manhattan norm on $\mathbb{R}^n$\\
$\mathbb{R}_+$ & the set of nonnegative reals\\
$\Cc{\infty}(U)$ & the space of 
smooth functions with compact support lying in
$U\subset \mathbb R^m$\\
$\Lip_\kappa(\mathbb R^m)$\hspace{24pt} & the space of $\kappa$-Lipschitz functions $\mathbb R^m\mapsto\mathbb R$, $\kappa\geq 0$\\
$\C{0}(\mathbb R^m; \mathbb R^n)$ & the space of 
continuous functions $\mathbb R^m\mapsto \mathbb R^n$\\
$\C{1}(\mathbb R^m; \mathbb R^n)$ & the space of 
continuously differentiable functions $\mathbb R^m\mapsto \mathbb R^n$\\
$\C{0}([0,T]; \mathcal X)$ & the space of 
continuous curves $\mathbb R\supset [0,T]\mapsto \mathcal X$ in a metric space $(\mathcal X, d)$\\
$\mathbf{\mathbf{BV}}([0,T]; \mathcal X)$ & the set of functions $[0,T] \mapsto \mathcal X$ with bounded variation\\
$\mathbf{\mathbf{BV}}_+([0,T]; \mathcal X)$ & the set of $\mathbf{\mathbf{BV}}$ functions which are right continuous on $[0,T)$\\
${\mathbf L}^{1}([0,T]; \mathbb R^n)$ & the Lebesgue space of integrable functions $[0,T]\mapsto \mathbb R^n$\\
${\mathbf L}^{\infty}([0,T]; \mathbb R^n)$ & the Lebesgue space of essentially bounded measurable maps $[0,T]\!\mapsto\!\mathbb R^n$\\
  $\Lw\infty$ & the space $\L\infty$ equipped with the weak-$*$ topology
           $\sigma(\L{\infty},\L{1})$\\
$\mathcal L^n$ & the $n$-dimensional Lebesgue measure\\
  $\nabla F$ & the derivative of $F\colon \mathbb R^n \mapsto \mathbb R^m$\\
$F_\sharp \mu$ & the push-forward of a measure $\mu$ through a function $F:\, \mathbb R^n \mapsto \mathbb R^m$\\
$\wto$ & the weak-$*$ convergence\\
$\rightharpoondown$ & convergence in $\mathbf{\mathbf{BV}}_+([0,T]; \mathcal X)$ at all continuity points of the limit \\ & function and at $t\in \{0,T\}$\\
  $\mathfrak m_1(\mu)$ & the first moment of a measure $\mu$, i.e., \( \mathfrak m_{1}(\mu)=\int|x|\d\mu(x) \)\\
$\spt \mu$ & support of a measure $\mu$ \\
  $\mathcal P(\Omega)$ & the set of all Borel probability measures defined on \( \Omega\subset \mathbb{R}^{n} \)\\
  $\mathcal P_1=\mathcal P_1(\mathbb R^n)$ & the set of all measures \( \mu\in\mathcal P(\mathbb{R}^{n}) \) with finite first moment\\
  $\mathcal P_c=\mathcal P_c(\mathbb R^n)$ & the set of all measures
                                             \(\mu\in\mathcal P(\mathbb{R}^{n}) \) with compact support\\
$\mu$, $\nu$, $\gamma$ & probability measures\\
$\bm\mu$, $\bm\nu$, $\bm\gamma$ & curves $[0,T]\mapsto \mathcal{P}(\Omega)$ in the space of probability measures \\
$(S)$, $(\hat S)$ & the original and reduced control systems\\
$\mathcal M$, $\overline{\mathcal M}$ & the sets of distributional and generalized solutions of $(S)$
\end{longtable}


\subsection{Facts from measure theory}\label{sec:measure}

Recall that $\mathcal P_1$ is the set composed of all probability measures $\mu$ on $\mathbb R^n$ with finite first moments, i.e., such that
\begin{equation*}
\mathfrak m_{1}(\mu) \doteq \int_{\mathbb R^n} |x|\d\mu(x)<\infty.\label{moment}
\end{equation*}
This set admits a natural structure of complete separable metric space \cite{Villani} when it is endowed with the so-called \emph{Wasserstein distance} $W_1$ defined through the Kantorovich norm (see~\cite{BogachevWeak}) as follows
$$
W_1(\mu,\nu)=\|\mu-\nu\|_K=
\sup\left\{\int_{\mathbb R^n}\varphi \d(\mu-\nu)
\;\colon\;
\varphi\in \Lip_1(\mathbb R^n)\right\}.
$$ 
We always assume that $\mathcal P_1$ is equipped with the metric $W_1$. Remark that 
$\mu^j\to\mu$ in $\mathcal P_1$ iff 
\begin{equation}
\label{eq:weakconv}
\int \varphi\d\mu^j \to\int\varphi\d\mu,
\end{equation}
for all \emph{sublinear} continuous functions $\varphi\colon\mathbb R^n\to\mathbb R$,
i.e., continuous functions such that $|\varphi(x)|\leq C(1+|x|)$ for some $C>0$ (see~\cite[Chapter 6]{Villani}).

Given a probability measure $\mu$ on $\mathbb R^m$ and a Borel measurable $F\colon \mathbb R^m \mapsto \mathbb R^n$, the \emph{push-forward of $\mu$ through $F$}, denoted by $F_\sharp \mu$, is a probability measure on $\mathbb R^n$ such that
$$
F_\sharp \mu(E) \doteq \mu\big(F^{-1}(E)\big)\text{ for any Borel } \, E \subset \mathbb R^n.
$$ 
For any $F_\sharp\mu$-integrable function $\phi\colon\mathbb R^n\to \mathbb R$, the following change of variables formula holds:
$$
\int_{\mathbb R^n} \varphi(y) \, \d F_\sharp \mu(y) =\int_{\mathbb R^m} (\varphi \circ F)(x) \d \mu(x).
$$






Given an abstract metric space $\mathcal X\doteq (\mathcal X, d)$ and a closed interval $[0,T] \subset \mathbb R_+$, a function $F: \, \mathbb R \mapsto \mathcal X$  
is said to have \emph{bounded variation} ($\mathbf{BV}$) on $[0,T]$ iff
$$
\Var_{[0,T]} F\doteq \sup_{} \sum_{i=1}^{N-1} d\big(F({t_i}), F({t_{i+1}})\big) <\infty,
$$
where the $\sup$ is taken over all finite partitions $\{t_i\}_{i=\overline{0,N}}\subset [0,T]$, $t_i<t_{i+1}$, $t_0=0$, $t_N=T$ of $[0,T]$. 
Note that, for any ${\mathbf{BV}}$ function $F(\cdot)$, the set $\Delta_{F} \subset [0,T]$ of its discontinuity points (``jump points'') is at most countable.


On the set $\mathbf{\mathbf{BV}}_+([0,T]; \mathcal X)$ of right continuous $\BV$ functions
we consider the following notion of convergence: $F_k\rightharpoondown F$ iff $F_k(t)\to F(t)$ at
all continuity points of $F$ and at the boundary points of the interval $[0,T]$.\footnote{This convergence is well-defined. Indeed, let $F', F'' \in \mathbf{\mathbf{BV}}_+$ be such that $F_k \rightharpoondown F'$ and $F_k \rightharpoondown F''$. 
By the definition of ``$\rightharpoondown$'', $F'(t)=F''(t)$ as soon as $t \in \{0,T\}$ or $t \in (0,T)$ is a continuity point of both $F'$ and $F''$. Fixed an arbitrary $\tau \in (0,T)$, 
consider a sequence of points $t_j \in (0,T)$  such that $t_j>\tau$, $\lim_{j\to\infty} t_j=\tau$, and $t_j$ are continuity points of $F'-F''$ (such a sequence does exist because the difference of two $\mathbf{\mathbf{BV}}$ functions is $\mathbf{\mathbf{BV}}$, and the set of  continuity points of a $\mathbf{\mathbf{BV}}$ function is dense in $[0,T]$).
Since both $F'$ and $F''$ are right continuous, and $F'(t_j)=F''(t_j)$ for all $t_j$, we have that $F'(\tau)-F''(\tau)=\lim_{j\to \infty} F'(t_j)- \lim_{j\to \infty}F''(t_j)=\lim_{j\to \infty} \big(F'(t_j)- F''(t_j)\big)=0$. Thus, $F'=F''$ on the whole $[0,T]$.}
In the special case where $\mathcal X=\mathbb R$ and all $F_k$ are monotone, the convergence $F_k\rightharpoondown F$ is equivalent to the weak convergence of the respective (nonnegative or nonpositive) Lebesgue-Stieltjes measures $d F_k \to d F$.

Finally, recall that any $F \in \mathbf{\mathbf{BV}}_+([0,T]; \mathbb R^n)$ admits a unique Lebesgue decomposition $F=F^c+F^d\doteq F^{ac}+F^{sc}+F^d$, where $F^{ac}$ and $F^{sc}$ are the absolutely continuous and singular continuous (with respect to $\mathcal L^1$) functions, and $F^d$ is the sum of jumps $F(\tau)-F(\tau^-)$, $\tau \in \Delta_F$.

\subsection{Basic assumptions}\label{sec:PDE}

In what follows, we impose the standard structural hypotheses
\begin{itemize}
    \item[$(\mathbf A_1)$] There exist $C,L>0$ such that
\begin{align*}
  \sum_{i=0}^m\left|f_i(x)\right|+\left|g(x)\right|&\leq C\;\mbox{ for all }x\in \mathbb{R}^n,\mbox{ and}\\
  \sum_{i=0}^m\left|f_i(x^1)-f_i(x^2)\right|+\left|g(x^1)-g(x^2)\right|&\leq
  L \,|x^1-x^2|\; \mbox{ for any }x^1, x^2 \in \mathbb R^n.
\end{align*}
\end{itemize}


These assumptions guarantee that~\eqref{eq:ce}, \eqref{eq:vfield}
admits a unique solution, for any $u \in \L\infty([0,T]; \mathbb R^m)$ and $\vartheta \in \mathcal P_1$ (see~\cite{PiccoliRossi2013} or Theorem~\ref{thm:wellposed} below). 
Recall that solutions of \eqref{eq:ce}, \eqref{eq:vfield} shall be understood in the weak sense, i.e., as absolutely continuous curves $\bm\mu\colon[0, T] \mapsto \mathcal P_1$ satisfying the relations:  $\bm\mu_0=\vartheta$,
$$
\int_{0}^T\int_{\mathbb R^n} \big(\partial_t \, \varphi(t,x) + \big\langle v_t(x), \nabla_x \, \varphi(t,x)\big\rangle\big) \, \d{\bm\mu_t(x)} \, \d t =0
$$
for all $\varphi\in \Cc\infty((0,T)\times \mathbb R^n)$,
and~\eqref{eq:vfield} for all $x\in\mathbb R^n$ and a.e. $t\in [0,T]$. 

We abbreviate system (\ref{eq:ce})--(\ref{eq:adm}) as $(S)$, and denote the set of its distributional solutions  by $\mathcal M$. 
Here, we shall stress again that the set $\mathcal M$ of all such curves is not
closed inside $\C0([0,T]; \mathcal P_1)$.
Indeed, put $m=1$, $f_0=g\equiv 0$, $f_1=1$, $\vartheta=\delta$, and take a sequence $\{u_k\}\subset\mathcal{U}$ such that $u^k\mathcal L^1 \to M\, \delta$ weakly.
Then the respective trajectories of (\ref{eq:ce}) converge to a discontinuous function, which is not admitted by $(S)$.

\section{Impulsive relaxation}\label{sec:relax}

The forthcoming system relaxation relies on the discontinuous time reparametrization technique \cite{Miller2003}, a standard workhorse of the finite-dimensional impulsive control theory, 
that was first adapted to the framework of transport equations in \cite{Star}. We use it to reduce $(S)$, which is driven by an $\L1$-bounded family $v_t$ of vector fields, to an auxiliary PDE, actuated by an $\L\infty$-bounded family $w_s$. The reduced system belongs to a particular class of nonlocal transport equations, whose well-posedness is established below. 

\subsection{Bounded controls: well-posedness}\label{sec:well-posedness}

Consider a general nonlocal transport equation
\begin{equation}
\label{eq:genpde}
    \partial_t \, \bm\mu_t + \nabla\cdot \left( V\big[\bm\mu_t,u(t)\big]\,\bm\mu_t\right) = 0,\quad t\in [0,T],\quad \bm\mu_0=\vartheta,
\end{equation}
where $V\colon \mathcal{P}_1\times \mathbb R^m\to\C0(\mathbb{R}^n;\mathbb{R}^n)$ is a function which maps states and control parameters to vector fields. Note that the choice 
$V[\mu,\upsilon]=f_0 + \sum_{i=1}^m \upsilon_i \, f_i + g*\mu$ boils equation~\eqref{eq:genpde} down to~\eqref{eq:ce}, \eqref{eq:vfield}.

The controls $u$ are taken from the class
$\tilde{\mathcal U}\doteq \L{\infty}([0,T]; U)$ produced by a compact convex set $U\subset\mathbb{R}^m$.
Observe that $\tilde{\mathcal U}$ is bounded in $\L\infty$, in contrast to $\mathcal{U}$.

\begin{theorem}
 \label{thm:wellposed}
Suppose that $V$ meets the following assumptions: 
\begin{enumerate}
  \item[$(a)$] There exists $C>0$ such that, for all
    $x\in \mathbb{R}^n$, $\mu\in\mathcal{P}_1$, and
    $\upsilon \in U$, one has
    \begin{gather*}
      \left|V[\mu,\upsilon](x)\right| \leq C\left(1+|x| + \mathfrak m_{1}(\mu)\right).
    \end{gather*}
  \item[$(b)$] There exists $L>0$ such that, for all
    $x,x'\in \mathbb{R}^n$, $\mu,\mu'\in\mathcal{P}_1$, and
    $\upsilon \in U$, one has
\begin{gather*}
\left|V[\mu,\upsilon](x) - V[\mu,\upsilon](x')\right|\leq L \, |x-x'|,\quad
 \left|V[\mu,\upsilon](x) - V[\mu', \upsilon](x)\right|\leq L \, \|\mu-\mu'\|_K.
\end{gather*}
\item [$(c)$]
For any curve $\bm\mu\in\C0([0,T];\mathcal{P}_1)$ 
and any $u\in\tilde{\mathcal U}$, the time dependent vector field
$v$ constructed by the rule $v_t \doteq V[\bm\mu_t,u(t)]$ is measurable
in $t$. Moreover, 
$u^k\wto u$ implies
$v_{(\cdot)}^k(x)\wto v_{(\cdot)}(x)$ for any $x$, 
whenever
  $u^k\in\tilde{\mathcal{U}}$, $v^k_t \doteq V[\bm\mu_t,u^k(t)]$, and $k\in\mathbb N$.

\end{enumerate}
  Then, for any $u\in \tilde{\mathcal{U}}$ and $\vartheta\in \mathcal{P}_1$,
  equation~\eqref{eq:genpde} has a unique solution $\bm\mu[u,\vartheta]$. Furthermore, the map \( (u , \vartheta)\mapsto
  \bm\mu[u, \vartheta] \) is continuous as a function
  $\tilde{\mathcal{U}}\times\mathcal{P}_1\mapsto \C0([0,T];\mathcal{P}_1)$,  where
  $\tilde{\mathcal U}$ is equipped with the weak-$*$ topology of \(\L{\infty} \).
\end{theorem}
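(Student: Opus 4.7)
The plan is a two-stage argument: a Banach fixed-point construction supplying existence and uniqueness for each fixed $(u,\vartheta)$, followed by an Ascoli-type compactness scheme that promotes this to joint continuity, with the weak-$*$ hypothesis $(c)$ used precisely to pass to the limit in the control.

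For existence and uniqueness at fixed $(u,\vartheta)$, I would introduce the operator $\mathcal{T}\colon \C0([0,T];\mathcal{P}_1)\to \C0([0,T];\mathcal{P}_1)$ that decouples the nonlocality: given $\bm\nu$, set $w_t(x)\doteq V[\bm\nu_t,u(t)](x)$, which by $(a)$, $(b)$, and the measurability part of $(c)$ is a Carath\'eodory vector field, $L$-Lipschitz in $x$ and of linear growth. Its flow $\Phi^{\bm\nu}_t$ is therefore globally defined, and I put $\mathcal{T}(\bm\nu)_t\doteq (\Phi^{\bm\nu}_t)_\sharp \vartheta$. A standard Gr\"onwall estimate on the characteristics controls the first moment of $\mathcal{T}(\bm\nu)$, so $\mathcal T$ maps $\C0([0,T];\mathcal{P}_1)$ into itself. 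Comparing characteristics for two inputs $\bm\nu,\bm\nu'$ via the coupling $(\Phi^{\bm\nu}_t,\Phi^{\bm\nu'}_t)_\sharp \vartheta$ and invoking both Lipschitz inequalities of $(b)$ yields
\[
W_1\bigl(\mathcal{T}(\bm\nu)_t,\mathcal{T}(\bm\nu')_t\bigr)\le L\,e^{LT}\int_0^t W_1(\bm\nu_s,\bm\nu'_s)\d s.
\]
Equipping $\C0([0,T];\mathcal{P}_1)$ with a Bielecki-type norm $\|\bm\nu\|_\lambda\doteq\sup_t e^{-\lambda t}W_1(\bm\nu_t,\delta_0)$ for sufficiently large $\lambda>0$ turns $\mathcal T$ into a strict contraction, and the Banach theorem supplies the unique solution $\bm\mu[u,\vartheta]$.

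Continuity in the initial datum follows at once from the same characteristic comparison. The delicate part is continuity in $u$ under the weak-$*$ topology. Take $u^k\wto u$, set $\bm\mu^k\doteq \bm\mu[u^k,\vartheta]$, and extract by an Ascoli-type theorem for curves into $(\mathcal{P}_1,W_1)$ a subsequence converging to some $\bm\nu\in\C0([0,T];\mathcal{P}_1)$: the uniform linear-growth bound from $(a)$ gives a uniform first-moment estimate and the equicontinuity estimate $W_1(\bm\mu^k_t,\bm\mu^k_s)\le C|t-s|$, which together with tightness deliver the required relative compactness. To identify the limit, one passes to the limit in the characteristic ODE: the frozen fields $V[\bm\mu^k_t,u^k(t)](x)$ converge to $V[\bm\nu_t,u(t)](x)$ in the $\sigma(\L\infty,\L1)$ sense in $t$ for every $x$, by combining $(b)$ (which absorbs the strong convergence $\bm\mu^k\to\bm\nu$) with $(c)$ (which transports the weak-$*$ convergence $u^k\wto u$). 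This is enough to derive uniform-on-compacts convergence of the corresponding flows; hence $\bm\nu_t=(\Phi^{\bm\nu,u}_t)_\sharp\vartheta$ solves the limit equation, and by the uniqueness established above $\bm\nu=\bm\mu[u,\vartheta]$, so the whole sequence converges. The main obstacle is precisely this last limit passage, where the nonlinear-in-$\mu$ and linear-in-$u$ dependencies must be disentangled in the correct order via $(b)$ and $(c)$.
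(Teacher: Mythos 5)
Your existence--uniqueness half is essentially the paper's: the same decoupled operator $\bm\nu\mapsto(\Phi^{\bm\nu}_{0,t})_\sharp\vartheta$, the same Gr\"onwall comparison of characteristics giving $W_1(\mathcal T(\bm\nu)_t,\mathcal T(\bm\nu')_t)\le L e^{LT}\int_0^t W_1(\bm\nu_s,\bm\nu'_s)\d s$, and the same exponentially weighted (Bielecki) metric to force a contraction (note that what you need is the weighted \emph{distance} $\sup_t e^{-\lambda t}W_1(\bm\nu_t,\bm\nu'_t)$, not the ``norm'' relative to $\delta_0$ you wrote). Where you genuinely diverge is the continuity statement. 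The paper invokes a parametrized contraction mapping theorem: since the contraction constant is uniform in $(u,\vartheta)$, joint continuity of the fixed point follows once one checks continuity of $(u,\vartheta)\mapsto\mathcal F(u,\vartheta,\bm\mu)$ for a \emph{frozen} curve $\bm\mu$; with $\bm\mu$ frozen the vector fields differ only through the control, so assumption $(c)$ applies directly and no compactness of the solution set is ever needed. You instead run the classical compactness--identification--uniqueness scheme on the solutions themselves: Ascoli in $\C0([0,T];\mathcal P_1)$, then a limit passage in which the measure and control dependencies are split via $(b)$ and $(c)$ exactly as you describe. Both routes ultimately rest on the same nontrivial external fact (convergence of flows when the driving fields converge weakly-$*$ in $t$ pointwise in $x$, under uniform Lipschitz and growth bounds --- Lemma~2.8 of the cited reference), so your assertion ``this is enough to derive convergence of the flows'' is at the paper's own level of rigor. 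The one step you should not gloss over is the Ascoli step: relative compactness of $\{\bm\mu^k_t\}_k$ in $(\mathcal P_1,W_1)$ requires tightness \emph{plus uniform integrability of the first moments}, not merely a uniform first-moment bound; this does follow here, since $\bm\mu^k_t=(\Phi^k_{0,t})_\sharp\vartheta$ with $|\Phi^k_{0,t}(x)|\le(|x|+C')e^{CT}$ uniformly in $k$, so the tails $\int_{|y|>R}|y|\d\bm\mu^k_t(y)$ are controlled by the tails of $\vartheta\in\mathcal P_1$, but it deserves a line. The trade-off: the paper's route is shorter and avoids compactness of the trajectory set altogether; yours is more self-contained in spirit (no abstract parametrized fixed-point theorem) and generalizes more readily to settings where the contraction constant is not uniform in the parameter.
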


If the initial measure is compactly supported, the assumption \( (b) \) of Theorem~\ref{thm:wellposed} can be replaced by a weaker assumption.

\begin{theorem}
  \label{thm:wellposedc}
  Suppose that $V$ meets the assumptions $(a)$, $(c)$ of Theorem~\ref{thm:wellposed} together with   \begin{enumerate}
    \item[\( (b') \)] For any compact \( K\subset \mathbb{R}^{n} \), there exists $L_{ K}>0$ such that, for all
    $x,x'\in K$, $\mu,\mu'\in\mathcal{P}(K)$, and
    $\upsilon \in U$, one has
\begin{gather*}
\left|V[\mu,\upsilon](x) - V[\mu,\upsilon](x')\right|\leq L_{K} \, |x-x'|,\quad
 \left|V[\mu,\upsilon](x) - V[\mu', \upsilon](x)\right|\leq L_{K} \, \|\mu-\mu'\|_K.
\end{gather*}
  \end{enumerate}
  Then, for any $u\in \tilde{\mathcal U}$ and $\vartheta \in \mathcal P_c$, the
  continuity equation~\eqref{eq:genpde} has a unique solution
  $\bm\mu[u,\vartheta]$. The map \( (u , \vartheta)\mapsto \bm\mu[u,
  \vartheta] \) is continuous as a function $\tilde{\mathcal{U}}\times\mathcal{P}(K)\mapsto
  \C0([0,T];\mathcal{P}_c)$, for any compact \(K\subset \mathbb{R}^{n} \).
\end{theorem}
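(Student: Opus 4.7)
The natural strategy is to deduce Theorem~\ref{thm:wellposedc} from Theorem~\ref{thm:wellposed} via an \emph{a priori} confinement of $\spt\bm\mu_t$ combined with a truncation of the vector field. First I would establish that, uniformly over $u\in\tilde{\mathcal U}$ and $\vartheta\in\mathcal P(\overline{B_{R_0}})$, every solution of~\eqref{eq:genpde} satisfies $\spt\bm\mu_t\subset\overline{B_{R_1}}$ for some $R_1=R_1(R_0,C,T)$. Indeed, representing $\bm\mu_t=X_{t\sharp}\vartheta$ along characteristics $\dot X_t=V[\bm\mu_t,u(t)](X_t)$ and using $(a)$ together with $\mathfrak m_1(\bm\mu_t)\le \sup_{x\in\spt\vartheta}|X_t(x)|$, Gronwall yields a bound of the form $|X_t(x)|\le (R_0+CT)e^{2CT}$ for $x\in\overline{B_{R_0}}$, independent of $u$.

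Next I would truncate $V$ outside a fixed ball $\overline{B_R}$ with $R>R_1$. Letting $\pi\colon\mathbb R^n\to\overline{B_R}$ denote the $1$-Lipschitz metric projection, define
\[
\tilde V[\mu,\upsilon](x)\doteq V\big[\pi_\sharp\mu,\,\upsilon\big]\big(\pi(x)\big).
\]
Since $\pi$ is $1$-Lipschitz, $W_1(\pi_\sharp\mu,\pi_\sharp\mu')\le W_1(\mu,\mu')$, and both the spatial argument $\pi(x)$ and the measure argument $\pi_\sharp\mu$ are supported in the compact $\overline{B_R}$. Hence $(b')$ with $K=\overline{B_R}$ upgrades to the \emph{global} Lipschitz bound $(b)$ required by Theorem~\ref{thm:wellposed}. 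Condition $(a)$ transfers to $\tilde V$ (the projection only shrinks $|x|$ and $\mathfrak m_1$), and $(c)$ is preserved because $\pi$ and $\pi_\sharp$ act on $(x,\mu)$ only, leaving the $u$-dependence untouched.

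Applying Theorem~\ref{thm:wellposed} to $\tilde V$ yields a unique solution $\tilde{\bm\mu}[u,\vartheta]\in\C0([0,T];\mathcal P_1)$ depending continuously on $(u,\vartheta)\in\tilde{\mathcal U}\times\mathcal P_1$. The same confinement argument from the first step, now applied to $\tilde V$ (which still satisfies $(a)$), gives $\spt\tilde{\bm\mu}_t\subset\overline{B_{R_1}}\subset\overline{B_R}$; therefore $\tilde V[\tilde{\bm\mu}_t,u(t)]\equiv V[\tilde{\bm\mu}_t,u(t)]$ along the curve, so $\tilde{\bm\mu}$ solves the original equation~\eqref{eq:genpde}. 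Conversely, any distributional solution with initial datum in $\mathcal P(K)$ (for a fixed compact $K$, after enlarging $R_0$) meets the same confinement and hence solves the truncated system, so the uniqueness statement of Theorem~\ref{thm:wellposed} closes the argument. The continuity as a map $\tilde{\mathcal U}\times\mathcal P(K)\mapsto \C0([0,T];\mathcal P_c)$ follows at once from the continuity of $(u,\vartheta)\mapsto\tilde{\bm\mu}[u,\vartheta]$ combined with the uniform support bound $\overline{B_{R_1}}$.

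I expect the delicate step to be the verification that the truncation $\tilde V$ satisfies assumption $(c)$: one must check that replacing $\mu$ by $\pi_\sharp\mu$ and $x$ by $\pi(x)$ still preserves measurability in $t$ and weak-$*$ continuity in $u$. This reduces to the observation that continuous composition with $\pi$ and $\pi_\sharp$ is harmless, but it must be spelled out carefully because the projection is not smooth. Should one prefer to keep $V$ smooth in $x$, the same scheme goes through with $\pi$ replaced by a smooth cut-off multiplier supported in $\overline{B_{R+1}}$ and $\pi_\sharp$ replaced by an analogous Wasserstein truncation, at the price of slightly heavier bookkeeping.
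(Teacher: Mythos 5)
Your argument is correct, but it closes the gap between Theorems~\ref{thm:wellposed} and~\ref{thm:wellposedc} by a different mechanism than the paper. The first half of your proof---the a priori confinement of $\spt\bm\mu_t$ in a ball $\overline{B_{R_1}}$ via characteristics and Gronwall---is exactly the paper's Lemma~\ref{lem:compspt}, and both routes hinge on it. Where you diverge is in how the merely local Lipschitz hypothesis $(b')$ is exploited: the paper does \emph{not} modify the vector field, but instead reruns the Banach fixed-point construction of Theorem~\ref{thm:wellposed} verbatim on the restricted spaces $\Lambda=\tilde{\mathcal U}\times\mathcal P(K)$ and $\mathcal X=\C0\big([0,T];\mathcal P(B_R(0))\big)$, on which $(b')$ already supplies the uniform contraction constant; you instead truncate $V$ by the metric projection $\pi$ onto $\overline{B_R}$ (in both the spatial and the measure argument) so that $(b)$ holds globally, invoke Theorem~\ref{thm:wellposed} as a black box, and then use the confinement to identify solutions of the truncated and original equations. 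Your version buys a genuinely modular proof, at the price of verifying that $\tilde V$ inherits $(a)$, $(b)$, $(c)$ and of the projection's regularity (note that since the paper's $|\cdot|$ is the Manhattan norm, $\pi$ is only Lipschitz with a dimension-dependent constant, or one should use the radial retraction; this only changes constants). The paper's version avoids any surgery on $V$ but requires re-inspecting the contraction proof. Both arguments share the same implicit step, which the paper also glosses over in Lemma~\ref{lem:compspt}: to confine an \emph{arbitrary} distributional solution $\bm\mu\in\C0([0,T];\mathcal P_1)$ (as needed for uniqueness in the full class), one must first know it is transported by the flow of $v_t=V[\bm\mu_t,u(t)]$, whose local Lipschitzness under $(b')$ alone is only guaranteed once compact support is known; this circularity is resolved by the standard superposition/uniqueness theory for linear continuity equations, and you are not held to a higher standard here than the paper itself.
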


The proofs of Theorems~\ref{thm:wellposed} and~\ref{thm:wellposedc}
are rather standard and thus deferred to Appendix~\ref{Append1}.

\medskip


For each $N\in \mathbb{N}$, define $f_N\colon \mathbb{R}^{(N+1)n}\times U \to \mathbb{R}^n$ as follows:
\[
    f_N(x, y_1, \ldots, y_N,\upsilon)
    \doteq
    V\left[\frac{1}{N}\sum_{j=1}^N\delta_{y_j},\upsilon\right](x),
    \quad 
    \upsilon\in U,\; x,y_j\in \mathbb{R}^n,\; j=\overline{1,N}.
\]
The assumption \( (a) \) of Theorem~\ref{thm:wellposedc} implies that $f_N$ is
sublinear; by \( (b') \), it is locally Lipschitz in $x$ and all
$y_j$; 
due to \( (c) \) the map $t \mapsto f_N(x,y_1,\ldots,y_N,u(t))$ is measurable for any $u\in\tilde{\mathcal{U}}$. Furthermore, $f_N$ is symmetric in $y_j$, that is
\[
f_N\left(x, y_{\sigma(1)}, \ldots, y_{\sigma(N)}, \upsilon\right) = 
f_N(x, y_1, \ldots, y_N, \upsilon),
\]
for any permutation $\sigma$ of $\{1,\ldots,N\}$. Hence, the following system of ODEs 
\begin{equation}
  \label{eq:discrete}
\dot x_k(t) = f_N\left(x_k(t),x_1(t),\ldots,x_N(t),u(t)\right), \quad k=\overline{1,N},
\end{equation}
is well-defined in the sense that it has a unique solution for any initial point
$(x_1^0,\ldots, x_N^0)$.

\begin{proposition}[Discrete initial data]
  \label{prop:discrete} 
  Under the assumptions of Theorem~\ref{thm:wellposedc}, the solution of~\eqref{eq:genpde} corresponding to $u\in \tilde{\mathcal{U}}$ and $\vartheta^N =
  \frac{1}{N}\sum_{k=1}^N\delta_{x_k^0}$ 
  takes the form
  \begin{displaymath}
  \bm\mu_t^N[u,\vartheta^N]=\frac{1}{N}\sum_{k=1}^N\delta_{x_k(t)},\quad t\in [0,T],
\end{displaymath}
where 
$\left(x_1(\cdot),\ldots,x_N(\cdot)
\right)$ satisfies~\eqref{eq:discrete} with
initial conditions $x_k(0)=x_k^0$, $k=\overline{1,N}$.
\end{proposition}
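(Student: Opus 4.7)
The plan is to define the candidate curve $\bm\nu_t \doteq \frac{1}{N}\sum_{k=1}^N \delta_{x_k(t)}$, verify that it is a distributional solution of~\eqref{eq:genpde} associated to the pair $(u,\vartheta^N)$, and then conclude by the uniqueness part of Theorem~\ref{thm:wellposedc}. First I would justify that the ODE system~\eqref{eq:discrete} has a global solution: hypothesis $(a)$ yields sublinear growth of $f_N$ in its spatial variables, and $(b')$ provides the local Lipschitz property needed for uniqueness; measurability in $t$ follows from $(c)$. Hence the Cauchy problem with data $x_k(0)=x_k^0$ has a unique absolutely continuous solution on $[0,T]$, and $\bm\nu_t$ is well-defined, lies in $\mathcal{P}_c$, and depends continuously on $t$ in the $W_1$ metric (since each $x_k(\cdot)$ is continuous).

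Next I would check the weak formulation. The key observation is self-referential: plugging $\bm\nu_t$ into $V$ gives, for every $x\in\mathbb{R}^n$,
\[
V[\bm\nu_t,u(t)](x) \;=\; V\!\left[\tfrac{1}{N}\sum_{j=1}^N \delta_{x_j(t)},\, u(t)\right]\!(x) \;=\; f_N\big(x, x_1(t),\ldots,x_N(t), u(t)\big).
\]
Therefore $V[\bm\nu_t,u(t)](x_k(t)) = \dot x_k(t)$ for almost every $t$, by the very definition of $x_k$ as a solution of~\eqref{eq:discrete}. For any $\varphi\in\Cc\infty((0,T)\times\mathbb{R}^n)$, the chain rule gives
\[
\frac{d}{dt}\varphi(t,x_k(t)) = \partial_t\varphi(t,x_k(t)) + \big\langle \dot x_k(t),\nabla_x\varphi(t,x_k(t))\big\rangle \;=\; \partial_t\varphi(t,x_k(t)) + \big\langle V[\bm\nu_t,u(t)](x_k(t)),\nabla_x\varphi(t,x_k(t))\big\rangle,
\]
and integrating in $t$ yields zero because $\varphi$ has compact support in $(0,T)\times\mathbb{R}^n$. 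Summing over $k$ and dividing by $N$ gives exactly the weak form of~\eqref{eq:genpde} tested against $\varphi$, with $\bm\nu_0 = \vartheta^N$.

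Finally, since Theorem~\ref{thm:wellposedc} guarantees that~\eqref{eq:genpde} admits a unique solution in $\C0([0,T];\mathcal P_c)$ for the data $(u,\vartheta^N)$, the curve $\bm\nu$ must coincide with $\bm\mu^N[u,\vartheta^N]$, which proves the claim. I do not anticipate a real obstacle; the only point requiring some care is the identification $V[\bm\nu_t,u(t)] = f_N(\cdot,x_1(t),\dots,x_N(t),u(t))$ which relies on the definition of $f_N$ via empirical measures, and the compatibility between the chain rule (applied for a.e.\ $t$) and the distributional identity.
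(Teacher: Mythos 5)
Your proposal is correct and follows essentially the same route as the paper: the crucial point in both is the identification $V[\bm\nu_t,u(t)](\cdot)=f_N(\cdot,x_1(t),\ldots,x_N(t),u(t))$, after which the empirical curve is recognized as a solution and uniqueness (Theorem~\ref{thm:wellposedc}) finishes the argument. The only cosmetic difference is that the paper observes that $\bm\mu^N$ is a fixed point of the operator $\mathcal F(u,\vartheta^N,\cdot)$ via the pushforward-through-the-flow representation, whereas you verify the weak formulation directly with the chain rule; these are equivalent.
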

The proof of Proposition~\ref{prop:discrete} is also given in Appendix~\ref{Append1}.

\begin{remark}
\label{rem:discrete}
In view of Theorem~\ref{thm:wellposed} and Proposition~\ref{prop:discrete}, one can consider~\eqref{eq:genpde} as a limit form of~\eqref{eq:discrete} as $N\to\infty$. Indeed, for any fixed $u\in\tilde{\mathcal{U}}$, the ODE~\eqref{eq:discrete} allows to construct $\vartheta^N$ and $\bm\mu^N$ in the way of Proposition~\ref{prop:discrete}. Now, suppose that $\vartheta^N$
converges to some $\vartheta \in \mathcal P_1$. Then, by Theorem~\ref{thm:wellposed}, $\bm\mu^N$ converges uniformly to
$\bm\mu[u,\vartheta]$, i.e., to the unique solution of~\eqref{eq:genpde} that corresponds to the initial measure $\vartheta$.
\end{remark}


\subsection{Unbounded controls: time rescaling and system relaxation}\label{sec:time}

The following continuity equation, ``living'' in the extended time scale $[0,S]$,
where $S\doteq T+M$ and $M$ is from \eqref{eq:adm}, is called the \emph{reduced} control system and abbreviated as $(\hat{S})$:
\begin{gather}
 \displaystyle\partial_s \, \bm\nu_s + \nabla \cdot\left( w_s\,\bm\nu_s\right)=0, \quad s\in [0,S],\quad \bm\nu_0=\vartheta,  \label{red-conteq}\\
 w_s(x)\doteq \alpha(s) \,\big(f_0(x)  + (g*\bm\nu_s)(x)\big) + \sum_{i=1}^m f_i(x) \, \beta_i(s),\label{hat-v}\\
(\alpha, \beta)\in \hat{\mathcal U}, \quad \beta\doteq(\beta_1, \ldots, \beta_m),\label{red-control-constr}\\
\displaystyle\hat{\mathcal U}\doteq \Big\{(\alpha, \beta) \in \L\infty([0,S];A)\;\colon\; \int_0^S \alpha(s) \, \d s=T\Big\},\nonumber\\
A\doteq \big\{(a, b) \in \mathbb R^{1+m}\;\colon\; a\geq 0, \ a + \sum_{j=1}^m |b_j|\leq 1\big\}.\nonumber
\end{gather}

\begin{remark}\label{remark-VF}
As one can easily check  (c.f. Lemma~\ref{lem:regularity}),  
the map
$V[\nu,\upsilon]=\alpha(f_0 + g*\nu) + \sum_{i=1}^m f_i\beta_i$
with $\upsilon=(\alpha,\beta)$,
satisfies all the assumptions of Theorem~\ref{thm:wellposed}.
According to this theorem, for each $(\alpha,\beta)\in \hat{\mathcal U}$ and $\vartheta\in\mathcal P_1$, equation~\eqref{red-conteq}, \eqref{hat-v} has a unique solution $\bm\nu[\alpha,\beta,\vartheta]$.
Moreover, 
the map $(\alpha,\beta,\vartheta)\mapsto \bm\nu[\alpha,\beta,\vartheta]$ is continuous as a function $\hat{\mathcal U}\times\mathcal{P}_1\mapsto \C0([0,S];\mathbb{R}^n)$ when $\hat{\mathcal U}$ is equipped with the weak-$*$ topology.
In other words, 
control system  $(\hat{S})$ is well-posed.
\end{remark}

The original system $(S)$ 
is naturally embedded in $(\hat{S})$ by the change of variable $t=\xi(s)$, where $\xi$ is the inverse of
$$
\Xi(t)\doteq t+\int_{0}^t\sum_{i=1}^m |u_i(\varsigma)|\, \d \varsigma,\quad t\in [0,T].
$$
Indeed, setting
\begin{equation}
\alpha(s)\doteq \frac{{\rm d}}{{\rm d}s}\xi(s)= \left.\frac{1}{1+\sum_{i=1}^m|u_i(t)|}\right|_{t=\xi(s)} \mbox{ and }\;  \beta_i(s)\doteq \left.\frac{u_i(t)}{1+\sum_{i=1}^m |u_i(t)|}\right|_{t=\xi(s)},\label{alphabeta}
\end{equation}
one observes that $(\alpha, \beta) \in \hat{\mathcal U}$, more precisely, $\alpha > 0$ and $\alpha+\sum_{j=1}^m |\beta_j|= 1$ $\mathcal L^1$-a.e. on $[0,S]$. Furthermore, suppose that $(\bm\mu, u)$ and $(\bm\nu, \alpha, \beta)$ are related by~\eqref{alphabeta} and
$\bm\nu_{s}=\bm\mu_{\xi(s)}$.
Then $(\bm\mu,u)$ satisfies $(S)$ iff $(\bm\nu, \alpha, \beta)$ satisfies~\eqref{red-conteq}, \eqref{hat-v} (see, e.g. \cite[Lemma~2.7]{AGS}). Note that $\xi$ meets
\begin{equation}
\label{xi}
\xi(s)= \int_{0}^{s} \alpha(\varsigma) \d \varsigma, \quad s \in [0,S],
\end{equation}
and $\xi(S)=T$.

Taken an arbitrary control $(\alpha, \beta) \in \hat{\mathcal U}$, define $\xi=\xi[\alpha]$ by (\ref{xi}). Now, the function $t \mapsto \xi(t)$ is not strictly monotone anymore, and its inverse $\Xi=\xi^{-1}$ is undefined. Still, one can introduce the pseudo-inverse \( \xi^\leftarrow \) of $\xi$ as
\[
  \xi^{\leftarrow}(t)=
\begin{cases} 
  \inf\big\{s\in [0,S]\;\colon\; \xi(s)>t\big\}, & t \in [0, T),\\
  S, & t=T.
\end{cases} 
\]

The following assertion claims that, along with control processes of $(S)$, 
reduced system $(\hat S)$ also describes
all their limits if the convergence is understood in an appropriate sense.   

\begin{theorem}\label{th-relax}
Suppose that a sequence 
\((\bm\mu^k,u^k)\) of control processes
of $(S)$ converges to some $(\bm\mu,\mathfrak{U})\in \mathbf{\mathbf{BV}}_+([0,T]; \mathcal P_1)\times
\mathbf{\mathbf{BV}}_+([0,T]; \mathbb{R}^m)$ in the sense that $\bm\mu^k\rightharpoondown\bm\mu$ and 
$F_{u^k}\rightharpoondown \mathfrak U$. Let 
$(\bm\nu^k,\alpha^k,\beta^k)$ be the control processes of $(\hat S)$ associated with $(\bm\mu^k,u^k)$, i.e., $(\alpha^k, \beta^k)$ are defined by~\eqref{alphabeta} with $u=u^k$, and
$\bm\nu^k$ is the corresponding solution of~\eqref{red-conteq}, \eqref{hat-v}.
Then
\begin{enumerate}[label=\emph{(\arabic*)}] 
\item  the sequence $(\bm\nu^k,\alpha^k,\beta^k)$ has at least one cluster point $(\bm\nu,\alpha,\beta)$ in $\C0([0,T];\mathcal P_1)\times \Lw\infty([0,T];\mathbb{R}^{m+1})$;
\item each cluster point $(\bm\nu,\alpha,\beta)$
is a control process of $(\hat S)$;
\item each cluster point $(\bm\nu,\alpha,\beta)$ satisfies the following identities:
\begin{align}
\bm\mu_t = \bm\nu_{\xi^\leftarrow(t)},
\quad
\mathfrak{U}(t) = \int_0^{\xi^\leftarrow(t)}\beta(s)\d s,
\quad t\in [0,T],\label{limit-relations}
\end{align}
where $\xi$ is defined by (\ref{xi}). 
\end{enumerate} 
\end{theorem}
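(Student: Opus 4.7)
The plan is to leverage the well-posedness of the reduced system $(\hat S)$ to obtain items (1) and (2) almost automatically, and then to derive the identities (\ref{limit-relations}) by passing to the limit in their $k$-indexed counterparts at a dense set of times, closing the argument by right-continuity.

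For (1)--(2), the set $\hat{\mathcal U}$ sits inside the $\L\infty$-bounded set $\L\infty([0,S]; A)$ with $A$ compact and convex, so Banach--Alaoglu furnishes a weak-$*$ cluster point $(\alpha,\beta)$ of the sequence $(\alpha^k,\beta^k)$. The scalar constraint $\int_0^S \alpha^k(s)\,ds = T$ passes to the limit by pairing against $1\in\L1$, so $(\alpha,\beta)\in\hat{\mathcal U}$. Since the vector field $V[\nu,(\alpha,\beta)] = \alpha(f_0 + g\ast\nu) + \sum_{i=1}^m \beta_i f_i$ meets the hypotheses of Theorem~\ref{thm:wellposed} (cf.~Remark~\ref{remark-VF}), the solution operator $(\alpha,\beta)\mapsto \bm\nu[\alpha,\beta,\vartheta]$ is weak-$*$-to-uniform continuous; along the extracted subsequence, $\bm\nu^k \to \bm\nu\doteq\bm\nu[\alpha,\beta,\vartheta]$ in $\C0([0,S];\mathcal P_1)$. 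This yields (1) and (2) simultaneously.

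For (3), strict positivity of each $\alpha^k$ makes $\xi^k$ a strictly increasing bijection, so $\xi^{k,\leftarrow} = (\xi^k)^{-1}$; the change of coordinates recalled after (\ref{alphabeta}) then gives, for all $k$ and all $t\in[0,T]$,
\[
\bm\mu^k_t = \bm\nu^k_{\xi^{k,\leftarrow}(t)}, \qquad F_{u^k}(t) = \int_0^{\xi^{k,\leftarrow}(t)}\beta^k(s)\,ds,
\]
the second equality obtained through the substitution $\tau = \xi^k(s)$. I would then assemble four convergences to pass to the limit: the equi-$1$-Lipschitz primitives $\xi^k$ tend to $\xi$ uniformly on $[0,S]$ (pointwise convergence from $\alpha^k\wto\alpha$, uniformity from Arzel\`a--Ascoli); the classical Helly-type lemma yields $\xi^{k,\leftarrow}(t)\to\xi^\leftarrow(t)$ at every continuity point of the monotone $\xi^\leftarrow$; $\bm\nu^k\to\bm\nu$ uniformly in $s$; and $\int_0^\cdot \beta^k\to\int_0^\cdot\beta$ uniformly by the same equi-Lipschitz argument. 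Combining these at a time $t$ which is simultaneously a continuity point of $\bm\mu$, $\mathfrak{U}$ and $\xi^\leftarrow$ gives $\bm\mu_t = \bm\nu_{\xi^\leftarrow(t)}$ and $\mathfrak{U}(t) = \int_0^{\xi^\leftarrow(t)}\beta(s)\,ds$. Such $t$ form a cocountable, hence dense, subset of $[0,T]$. Since $\bm\nu$ is continuous and $\xi^\leftarrow$ is right-continuous (from the strict-inequality infimum in its definition), both sides of each identity are right-continuous $\mathbf{BV}$-functions; the uniqueness argument recorded in the footnote of \S\ref{sec:measure} then upgrades the identities to every $t\in[0,T]$.

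The main delicacy I anticipate is the behavior at the endpoints $t\in\{0,T\}$. These points are singled out in the convergence $\rightharpoondown$ (which pins down $\bm\mu_0$, $\bm\mu_T$, $\mathfrak{U}(0)$, $\mathfrak{U}(T)$ to the componentwise limits of their $k$-indexed counterparts) and in the definition of $\xi^\leftarrow$ (explicitly $\xi^\leftarrow(T) = S$, and $\xi^\leftarrow(0)$ via an $\inf$ rule); moreover $\xi^\leftarrow$ may jump at the endpoints, corresponding to ``ghost intervals'' in $[0,S]$ on which $\alpha\equiv 0$ and $\bm\nu$ absorbs a boundary impulse. I would settle them by a direct computation exploiting $F_{u^k}(0)=0$, $\bm\mu^k_0=\vartheta$, $\xi^k(S)=T$, and the uniform convergence $\bm\nu^k_S\to\bm\nu_S$; once these endpoint identifications are in hand, the density plus right-continuity argument of the previous paragraph closes the proof.
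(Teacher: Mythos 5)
Your proposal is correct and follows essentially the same route as the paper: compactness of $\hat{\mathcal U}$ in $\Lw\infty$ plus the well-posedness of $(\hat S)$ for (1)--(2), and for (3) the $k$-level identities $\bm\mu^k_t=\bm\nu^k_{\xi^{k,\leftarrow}(t)}$, $F_{u^k}(t)=\int_0^{\xi^{k,\leftarrow}(t)}\beta^k$, passed to the limit by combining uniform convergence of $\bm\nu^k$ and of the primitives with pointwise convergence of the pseudo-inverses at continuity points of $\xi^\leftarrow$ (the paper cites Miller's Lemma~2.5 where you invoke the equivalent Helly-type fact), closed by right-continuity and the uniqueness of $\rightharpoondown$-limits. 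Your extra care at the endpoints is resolved exactly as you sketch ($\xi^{k,\leftarrow}(T)=S=\xi^\leftarrow(T)$ directly, and $t=0$ via right-continuity of both sides), so no gap remains.
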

\begin{proof}
Since $\hat{\mathcal U}\subset\L\infty$ is closed and convex, it is also
closed in the weak (and thus in the weak-$*$) topology of $\L\infty$. Now, it follows from the Banach-Alaoglu theorem that $\hat{\mathcal U}$ is compact in $\Lw\infty$.
This fact and the continuity of the input-output map $(\alpha, \beta) \mapsto \bm\nu[\alpha,\beta,\vartheta]$ (Remark~\ref{remark-VF}) imply assertions (1) and (2).

Let us prove assertion (3). Without loss of generality, we can assume that
$(\bm\nu^k,\alpha^k,\beta^k)$ converges to 
$(\bm\nu,\alpha,\beta)$. Consider the distribution functions $\xi^k$, $\zeta^k$, $\xi$, $\zeta$
of controls $\alpha^k$, $\beta^k$, $\alpha$, $\beta$,
and denote by $\Xi^k$ the inverse of $\xi^k$.
In view of (\ref{alphabeta}), one has 
\begin{align*}
\zeta^k\big(\Xi^k(t)\big)&=\int_{0}^{\Xi^k(t)}\beta^k(s)\d s=\int_{0}^{\Xi^k(t)}\frac{\beta^k(s)}{\alpha^k(s)} \d\xi^k(s)
\\&= \int_{0}^{t}\frac{\beta^k}{\alpha^k}\big|_{s=\Xi^k(\varsigma)}\d\varsigma=\int_{0}^{t} u^k(\varsigma) \d \varsigma=F_{u^k}(t).
\end{align*}
By assumptions of the theorem, $F_{u^k}\rightharpoondown \mathfrak U$. Now, the convergence $\zeta^k\circ \Xi^k\rightharpoondown\zeta\circ \xi^{\leftarrow}$ would
imply the identity
$\mathfrak{U}(t) = \int_0^{\xi^\leftarrow(t)}\beta(s)\d s$. 
Analyzing the inequality
\[
\left|\zeta^k\circ\Xi^k(t)-\zeta\circ\xi^\leftarrow(t)\right|
\leq
\left|\zeta^k\circ\Xi^k(t) - \zeta\circ\Xi^k(t)\right|+
\left|\zeta\circ\Xi^k(t) - \zeta\circ\xi^\leftarrow(t)\right|,
\]
we conclude that, after passing to the limit, the first term on the right-hand side vanishes because $\zeta^k\to\zeta$ uniformly, while the second term
vanishes at all continuity points of $\xi^\leftarrow$ and at $T$ since $\Xi^k\rightharpoondown\xi^\leftarrow$ due to~\cite[Lemma~2.5]{Miller2003}. This gives $\zeta^k\circ \Xi^k\rightharpoondown\zeta\circ \xi^{\leftarrow}$.


Since $\xi^\leftarrow$ is right continuous and $\BV$, we deduce that $\bm\nu_{\xi^\leftarrow}\in
\BV_+([0,T];\mathcal{P}_1)$ and it has the same set of continuity points as $\xi^\leftarrow$. It remains to show that
$\bm\mu^k\rightharpoondown\bm\nu_{\xi^\leftarrow}$. 
To this end, recall that $\bm\mu^k_t = \bm\nu^k_{\Xi^k(t)}$ and
consider the inequality
\[
\left\|\bm\nu^k_{\Xi^k(t)} - \bm\nu_{\xi^\leftarrow(t)}\right\|_K
\leq
\left\|\bm\nu^k_{\Xi^k(t)}
 - \bm\nu_{\Xi^k(t)}\right\|_K
 +
\left\|\bm\nu_{\Xi^k(t)}- \bm\nu_{\xi^\leftarrow(t)}\right\|_K.
\]
Again, pass to the limit as $k \to \infty$. 
The first term on the right-hand side tends to zero because $\bm\nu^k\to\bm\nu$ uniformly (this follows from Theorem~\ref{thm:wellposed} applied to $(\hat S)$), and the second term tends to zero at all points of continuity of $\bm\nu_{\xi^\leftarrow}$ since
$\Xi^k\rightharpoondown\xi^\leftarrow$.
\end{proof}
\begin{remark}\label{right-continuity}
(i) Any sequence $(\bm\mu^k,u^k)$ of control processes of $(S)$ contains a subsequence converging to some $(\bm\mu,\mathfrak{U})$ in the sense of Theorem~\ref{th-relax}\footnote{Indeed, by the classical Helly's selection principle, $\{F_{u^k}\}$ contains a subsequence $\{F_{u^{k_j}}\}$ converging pointwise to a $\textbf{BV}$ function $F$. By changing $F$ at its discontinuity points, we obtain a right continuous function $\mathfrak{U}$ such that $F_{u^{k_j}} \rightharpoondown \mathfrak U$.  Consider the sequence $(\bm\nu^{k_j},\alpha^{k_j},\beta^{k_j})$ of
control processes  of $(\hat S)$ associated to $(\bm\mu^{k_j},u^{k_j})$. Theorem~\ref{th-relax} says that there is a cluster point $(\bm\nu,\alpha,\beta)$
being a control process of $(\hat S)$. Recalling the arguments from the proof of assertion (3) of Theorem~\ref{th-relax}, one ensures that $\bm\mu^{k_j}\rightharpoondown\bm\nu_{\xi^\leftarrow}$. It remains to define $\bm\mu$ by (\ref{limit-relations}).   
}.

(ii) The sequence $(\bm\nu^k,\alpha^k,\beta^k)$ may contain 
many cluster points. But all of them are ``projected'' by the discontinuous time change $\xi^\leftarrow$ to the same point $(\bm\mu,\mathfrak{U})$.

\end{remark}

The next result can be considered as an inverse of Theorem~\ref{th-relax}.

\begin{theorem}\label{th-relax-inverse}
Let $(\bm\nu,\alpha, \beta)$ be a control process of the reduced system $(\hat S)$. Then the pair $(\bm\mu,\mathfrak{U})$ defined by~\eqref{limit-relations}
is a limit point for some sequence $(\bm\mu^k,u^k)$ of control
processes of $(S)$.
\end{theorem}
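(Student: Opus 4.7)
The plan is to exhibit $(\bm\mu, \mathfrak{U})$ as the limit of genuine control processes of $(S)$ by approximating the given reduced control $(\alpha, \beta) \in \hat{\mathcal U}$ within the subset
\[
\hat{\mathcal U}_\circ \doteq \bigl\{(\tilde\alpha, \tilde\beta) \in \hat{\mathcal U} : \tilde\alpha \text{ is bounded below by a positive constant and } \tilde\alpha + \sum_i |\tilde\beta_i| = 1 \text{ a.e.}\bigr\},
\]
which is precisely the image of $\mathcal U$ under the parametrization (\ref{alphabeta}); the map $u \mapsto (\alpha, \beta)$ is a bijection of $\mathcal U$ onto $\hat{\mathcal U}_\circ$, with inverse $u_i(t) = \tilde\beta_i(\Xi(t))/\tilde\alpha(\Xi(t))$, where $\Xi = \xi^{-1}$. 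Once an approximating sequence $(\alpha^k, \beta^k) \in \hat{\mathcal U}_\circ$ with $(\alpha^k, \beta^k) \wto (\alpha, \beta)$ is constructed, the argument from the proof of Theorem~\ref{th-relax}(3) transfers it to the $(S)$-side and furnishes the desired limit.

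\medskip

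\textbf{Step 1 (weak-$*$ density of $\hat{\mathcal U}_\circ$ in $\hat{\mathcal U}$).} The simplex $A$ is the convex hull of $(1, 0) \in \mathbb R^{1+m}$ together with $(0, \pm e_i)$ for $i = 1, \ldots, m$; all but the first of these extreme points have $\alpha = 0$, so I will replace them by the perturbed sphere points $(\epsilon_k, \pm (1-\epsilon_k) e_i)$ with $\epsilon_k \downarrow 0$, which still lie on the sphere $\{\alpha + \sum_i |\beta_i| = 1\}$ and satisfy $\alpha = \epsilon_k > 0$. A standard chattering argument on a partition of $[0, S]$ with mesh $\to 0$ then produces piecewise constant controls $(\alpha^k, \beta^k)$ whose values are convex combinations of these perturbed extremes, matching the local averages of $(\alpha, \beta)$ up to a total error of order $\epsilon_k$ and converging to $(\alpha, \beta)$ weakly-$*$ in $\L\infty$. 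A final corrective modification on a subinterval of vanishing length enforces the exact constraint $\int_0^S \alpha^k(s) \d s = T$ while preserving membership in $\hat{\mathcal U}_\circ$.

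\medskip

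\textbf{Step 2 (passage to $(S)$ and convergence).} For each $k$, set $\xi^k(s) = \int_0^s \alpha^k$ and $\Xi^k = (\xi^k)^{-1} \colon [0, T] \to [0, S]$; define $u^k \in \mathcal U$ by $u^k_i(t) = \beta^k_i(\Xi^k(t))/\alpha^k(\Xi^k(t))$ and $\bm\mu^k_t = \bm\nu^k_{\Xi^k(t)}$, where $\bm\nu^k$ is the unique solution of $(\hat S)$ associated with $(\alpha^k, \beta^k)$. A change of variables shows that $F_{|u^k|}(T) = \int_0^S \sum_i |\beta^k_i| = M$, so that $(\bm\mu^k, u^k)$ is a bona fide control process of $(S)$ (\S\ref{sec:time}). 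The well-posedness of $(\hat S)$ (Theorem~\ref{thm:wellposed} and Remark~\ref{remark-VF}) yields $\bm\nu^k \to \bm\nu$ uniformly in $\C0([0, S]; \mathcal P_1)$, while the uniform convergence $\xi^k \to \xi$ together with \cite[Lemma~2.5]{Miller2003} gives $\Xi^k \rightharpoondown \xi^\leftarrow$. Combining these exactly as in the proof of Theorem~\ref{th-relax}(3), $\bm\mu^k_t = \bm\nu^k_{\Xi^k(t)} \to \bm\nu_{\xi^\leftarrow(t)} = \bm\mu_t$ at every continuity point of $\xi^\leftarrow$ and at $t \in \{0, T\}$, i.e., $\bm\mu^k \rightharpoondown \bm\mu$; the same argument applied to $F_{u^k}(t) = \int_0^{\Xi^k(t)} \beta^k(s) \d s$ yields $F_{u^k} \rightharpoondown \mathfrak{U}$.

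\medskip

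The principal technical obstacle is Step~1: simultaneously meeting the pointwise sphere identity, the strict positivity of $\alpha^k$, and the exact integral constraint $\int_0^S \alpha^k = T$ under weak-$*$ convergence. A naive convex-combination scheme fails because averages of sphere-valued controls generically lie in the interior of $A$ rather than on its boundary, which is precisely what necessitates the Young-measure-style chattering. The positivity perturbation $\epsilon_k \downarrow 0$ is harmless on its own but introduces an $O(\epsilon_k)$ shift in $\int_0^S \alpha^k$; this residual can be eliminated by a compensating modification on a set of vanishing measure.
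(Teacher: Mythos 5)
Your proposal is correct and follows essentially the same route as the paper: the paper's (very terse) proof likewise approximates $(\alpha,\beta)$ in $\Lw\infty$ by controls with $\alpha^k>0$ lying in the image of $\mathcal U$ under \eqref{alphabeta} (deferring the construction to Miller's Lemma~5.2) and then reruns the arguments of Theorem~\ref{th-relax}. Your Step~1 simply makes explicit, via chattering on the perturbed sphere points and a final correction of $\int_0^S\alpha^k$, the approximation the paper outsources to the cited reference.
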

\begin{proof} One just needs to approximate  $(\alpha, \beta)$ in $\Lw\infty$ by a sequence $(\alpha^k, \beta^k) \in \hat{\mathcal U}$ satisfying $\alpha^k>0$ $\mathcal L^1$-a.e. on $[0,S]$, and recruit the same arguments as in the proof of Theorem~\ref{th-relax}. An appropriate approximation can be designed as in \cite[Proof of Lemma 5.2]{Miller2003}.
\end{proof}

\subsection{Impulsive control theory formalism
}\label{sec:imp}

The following definition, extending \cite{Star}, is an infinite-dimensional version of the notion \cite{Miller2003} of generalized solution to a control system:
\begin{definition}\label{def-solution}
A function $\bm\mu \in \mathbf{BV}_+([0,T]; \mathcal P_1)$ is called a generalized solution of $(S)$  
if there is a sequence $\{\bm\mu^k\}\subset \mathcal M$
such that $\bm\mu^k\rightharpoondown\bm\mu$.
\end{definition}
The set of generalized solutions is denoted by $\overline{\mathcal M}$. By construction, this is the sequential closure of $\mathcal M$ in $\big(\mathbf{BV}_+([0,T]; \mathcal P_1), \rightharpoondown\big)$.
Moreover, Theorems~\ref{th-relax} and \ref{th-relax-inverse} say that the trajectories of the reduced system $(\hat S)$, after an appropriate discontinuous time reparametrization, produce exactly the set $\overline{\mathcal M}$.

Below, we obtain an alternative representation of $\overline{\mathcal M}$  through a specific impulsive system on $\mathcal P_1$, i.e., a ``measure-driven equation in the space of measures'' (see equation~\eqref{MCE}).


\smallskip

Consider a generalized solution $\bm\mu$ produced by a sequence $\{\bm\mu^k\}\subset \mathcal M$ of admissible arcs of $(S)$. Let $\{u^k\}\subset \mathcal U$ be the corresponding control sequence. We can always assume 
that cumulative distributions $(F_{u^k}, F_{|u^k|})$
converge in $\big(\mathbf{BV}_+([0,T], \mathbb R^{m+1}), \rightharpoondown\big)$ to some function $(\mathfrak U,\mathfrak V)$,\footnote{Again, if it is not the case, we can pass to a pointwise converging subsequence of $\{F_{(u^k,|u^k|)}\}$, whose existence is guaranteed by the Helly's selection principle, and make it right continuous.} and agree that $(\mathfrak U, \mathfrak V)(0^-)\doteq 0\in \mathbb R^{m+1}$.  Note that $|\mathfrak U|\leq \mathfrak V$ (not necessarily ``$=$''), and $\mathfrak V(T)= M$.

\begin{proposition}\label{MDEinSM}
Let  $\bm\mu$
be a generalized solution produced by a control sequence $\{u^k\}\subset \mathcal U$, and $(F_{u^k}, F_{|u^k|}) \rightharpoondown (\mathfrak U, \mathfrak V)$. Assume that $\mathfrak V^{sc}=0$, where $\mathfrak V^{sc}$ is the singular continuous part of $\mathfrak V$, and  that the (at most countable) set $\Delta_{\mathfrak V}\doteq \{\tau \in [0,T]: \, \mathfrak V(\tau)-\mathfrak V(\tau^-)\neq 0\}$ is naturally ordered, i.e., $\Delta_{\mathfrak V}=\{0\leq \tau_1<\tau_2<\ldots<\tau_j<\tau_{j+1}<\ldots\leq T\}$.\footnote{The natural ordering of jump points of $\mathfrak V$, as well as the triviality of the singular continuous part $\mathfrak V^{sc}$, are rather  artificial assumptions, which are hard to be guaranteed a priori. Meanwhile, this situation takes place in all known to us applications of impulsive control theory (see e.g. \cite{Dykhta2000,Miller2003,Zavalishchin1997,BressanPiccoliBook}) etc.} Then there exist

\begin{itemize}
\item measurable functions $u^\tau_i
: \, [0, T_\tau]\mapsto \mathbb{R}$, $\tau \in \Delta_{\mathfrak V}$, $i=\overline{1,m}$,  $T_\tau \doteq \mathfrak V(\tau)-\mathfrak V(\tau^-)$, satisfying
\begin{equation}
\hspace{-0.3cm}\displaystyle\sum_{i=1}^m |u^\tau_i|=1; \quad \int_0^{T_\tau}u_i^\tau(\varsigma)\d \varsigma = \mathfrak U_i(\tau)-\mathfrak U_i(\tau^-), \ \ i=\overline{1,m},
\label{u-tau-constraint}
\end{equation}
    \item 
    absolutely continuous curves $\bm m^\tau: \, [0,T_\tau] \mapsto \mathcal P_1$ with the property
\begin{equation}
\hspace{-0.3cm}\bm m_0^\tau=\bm\mu_{\tau^-}, \quad \bm m_{T_\tau}^\tau=\bm\mu_{\tau}, \ \ \ \tau \in \Delta_{\mathfrak V},  
\label{limit-endpoint}\end{equation}
\end{itemize}
such that $\bm\mu$ satisfies the following equation
\begin{align}
0&= \int_0^{T}\int_{\mathbb R^n}\!\!\Big(\partial_t \, \varphi(t,x) +
 \big[f_0(x) + (g*\bm\mu_t)(x) + \sum_{i=1}^m \dot{\mathfrak U}^{ac}_i(t) \, f_i(x)\big] \cdot \nabla \, \varphi(t,x)\Big)
 \d {\bm\mu}_t(x) \d t \nonumber\\
&+\sum_{\tau \in \Delta_{\mathfrak V}} \int_{0}^{T_\tau}\int_{\mathbb R^n} \Big(\partial_\varsigma \, \varphi^\tau(\varsigma,x) +
\big[\sum_{i=1}^m  u^\tau_i(\varsigma) \, f_i(x)\big] \cdot \nabla \, \varphi^\tau(\varsigma,x)\Big)
\d{\bm m}^\tau_\varsigma(x) \d\varsigma
\label{MCE}
\end{align}
for all collections $\Phi=(\varphi, \{\varphi^\tau\}_{\tau \in \Delta_{\mathfrak V}})$ of test functions $\varphi: \, (0,T)\times \mathbb R^n \mapsto \mathbb R$, $\varphi^\tau: \, [0,T_\tau]\times \mathbb R^n\mapsto \mathbb R$, $\tau \in \Delta_{\mathfrak V}$, with the properties:
\begin{itemize}
    \item $\varphi$ is right continuous in $t$ for all $x \in \mathbb R^n$, and is $\C\infty_c$ on each $(\tau_j, \tau_{j+1}) \times \mathbb R^n$;
    \item all $\varphi^\tau$, $\tau \in \Delta_{\mathfrak V}$, are $\C\infty_c$ on the respective sets $(0, T_\tau) \times \mathbb R^n$, and
    \item $\varphi(\tau^-, x)=\varphi^\tau(0, x)$ and $\varphi(\tau, x)=\varphi^\tau(T_\tau, x)$, for all $\tau \in \Delta_{\mathfrak V}$ and $x \in \mathbb R^n$.
\end{itemize}
Here, $\dot{\mathfrak U}^{ac}$ denotes the absolutely continuous part of $\dot{\mathfrak U}$. 

\end{proposition}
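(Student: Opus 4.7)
The plan is to work on the extended time scale $[0,S]$ via the reduced system $(\hat S)$ of Section~\ref{sec:time}, isolate the plateaus of the time change $\xi(s)=\int_0^s\alpha(\varsigma)\,\mathrm d\varsigma$, and then translate the resulting equation back to $[0,T]$ by identifying each plateau with a jump point. First, extracting a subsequence, Theorem~\ref{th-relax} furnishes a reduced control process $(\bm\nu,\alpha,\beta)$ of $(\hat S)$ with $\bm\mu_t=\bm\nu_{\xi^\leftarrow(t)}$ and $\mathfrak U(t)=\int_0^{\xi^\leftarrow(t)}\beta\,\mathrm d s$. Since $\Xi^k(t)=t+F_{|u^k|}(t)\rightharpoondown t+\mathfrak V(t)$ and also $\Xi^k\rightharpoondown\xi^\leftarrow$ (an intermediate step in the proof of Theorem~\ref{th-relax}), we must have $\xi^\leftarrow(t)=t+\mathfrak V(t)$. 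Consequently, each $\tau\in\Delta_{\mathfrak V}$ corresponds to a plateau $I_\tau=[s_\tau^-,s_\tau^+]$ of $\xi$ of length exactly $T_\tau$; the hypothesis $\mathfrak V^{sc}=0$ guarantees that $\bigcup_\tau I_\tau$ exhausts $\{\alpha=0\}\subset[0,S]$ up to $\mathcal L^1$-null sets, and hence $\alpha>0$ a.e.\ on its complement.

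\textbf{Intermediate curves and reassembly of \eqref{MCE}.} Next, I would set $\bm m^\tau_\varsigma:=\bm\nu_{s_\tau^-+\varsigma}$ for $\varsigma\in[0,T_\tau]$; the continuity of $\bm\nu$ then gives the endpoint relations \eqref{limit-endpoint}. On $I_\tau$, where $\alpha=0$ a.e., the reduced vector field collapses to $\sum_i f_i\beta_i(s)$, so $\bm m^\tau$ satisfies the transport equation driven by $\sum_i f_iu^\tau_i$ for any $u^\tau_i$ consistent with $\beta_i|_{I_\tau}$; in particular $\int_0^{T_\tau}u^\tau_i\,\mathrm d\varsigma=\int_{I_\tau}\beta_i\,\mathrm d s=\mathfrak U_i(\tau)-\mathfrak U_i(\tau^-)$ for the natural choice. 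To extract \eqref{MCE}, I lift a given test collection $\Phi=(\varphi,\{\varphi^\tau\})$ to a function $\psi$ on $[0,S]\times\mathbb R^n$ via $\psi(s,x):=\varphi(\xi(s),x)$ outside $\bigcup_\tau I_\tau$ and $\psi(s_\tau^-+\varsigma,x):=\varphi^\tau(\varsigma,x)$ on each $I_\tau$; the matching conditions at $s_\tau^\pm$ render $\psi$ continuous and, after routine mollification, admissible in the weak formulation of $(\hat S)$. Splitting the $s$-integral along the plateaus and applying the change of variable $t=\xi(s)$ on the complement, together with the identity
\[
\frac{\beta_i(s)}{\alpha(s)}\bigg|_{s=\xi^\leftarrow(t)}=\dot{\mathfrak U}^{ac}_i(t)\qquad\text{for a.e.}\ t\in[0,T]\setminus\Delta_{\mathfrak V}
\]
(a consequence of the Lebesgue decomposition of $\mathfrak U(t)=\int_0^{\xi^\leftarrow(t)}\beta\,\mathrm d s$ under $\mathfrak V^{sc}=0$, which forces $\mathfrak U^{sc}=0$), the first integral of \eqref{MCE} emerges, while each plateau $I_\tau$ contributes the matching summand of the second series.

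\textbf{Main obstacle.} The delicate point is to secure the pointwise identity $\sum_i|u^\tau_i|=1$ in \eqref{u-tau-constraint}. The naive choice $u^\tau_i=\beta_i(s_\tau^-+\cdot)$ only gives $\sum|u^\tau_i|\leq 1$: the exact identity $\alpha^k+\sum_i|\beta^k_i|=1$ holds for each reduction of $u^k$, but the $\ell^1$-sphere $\{\sum|y|=1\}\subset\mathbb R^m$ is not weakly-$*$ closed, so equality may degrade in the limit. To enforce it, I would reparametrize each $u^k$ directly by its cumulative $\ell^1$-mass on a neighborhood $[a^k_\tau,b^k_\tau]\ni\tau$: setting $\phi^k(t)=F_{|u^k|}(t)-F_{|u^k|}(a^k_\tau)$ with $\phi^k(b^k_\tau)\to T_\tau$ and $u^{\tau,k}_i(\varsigma):=u^k_i(t)/\sum_j|u^k_j(t)|\big|_{t=(\phi^k)^{-1}(\varsigma)}$, one has $\sum_i|u^{\tau,k}_i|=1$ tautologically wherever $|u^k|>0$. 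The curves $\bm m^{\tau,k}_\varsigma:=\bm\mu^k_{(\phi^k)^{-1}(\varsigma)}$ solve, up to a term of order $1/\sum_j|u^k_j|\to 0$, the transport equation driven by $\sum_i f_iu^{\tau,k}_i$, and Theorem~\ref{thm:wellposed} produces a uniform cluster point $\bm m^\tau$ and a weak-$*$ cluster point $u^\tau$ satisfying the endpoints \eqref{limit-endpoint} and the integral constraint in \eqref{u-tau-constraint}. The persistence of $\sum_i|u^\tau_i|=1$ in the limit is what I expect to demand the most care: the argument should pass through either a Young-measure/barycenter description of the cluster point, or a selection within the set of admissible pairs $(u^\tau,\bm m^\tau)$ that exploits the freedom still present in the approximating sequence.
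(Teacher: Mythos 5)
Your overall route coincides with the paper's: pass to a reduced process $(\bm\nu,\alpha,\beta)$ of $(\hat S)$ via Theorem~\ref{th-relax}, split $[0,S]$ into the plateaus of $\xi$ and their complement, lift the test collection $\Phi$ to a single test function $\psi$ on $[0,S]$, and recover the drift coefficient of the first integral from the identity $\dot{\mathfrak U}^{ac}=\big(\beta\,\alpha^\oplus\big)\circ\xi^\leftarrow$. Those parts are sound, and the observation $\xi^\leftarrow=\mathrm{id}+\mathfrak V$ (hence $|I_\tau|=T_\tau$) is a clean way to match plateaus with atoms of $\mathfrak V$.

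The genuine gap is the one you flag yourself: the pointwise normalization $\sum_i|u^\tau_i|=1$ in \eqref{u-tau-constraint}. Your fixed-speed parametrization $\bm m^\tau_\varsigma=\bm\nu_{s_\tau^-+\varsigma}$, $u^\tau_i=\beta_i(s_\tau^-+\cdot)$ only yields $\sum_i|u^\tau_i|\le 1$, and your proposed repair --- renormalizing the approximants $u^{\tau,k}$ onto the $\ell^1$-sphere and extracting a weak-$*$ cluster point --- collides with exactly the obstruction you diagnosed two sentences earlier: the sphere is not weak-$*$ closed, so the cluster point again satisfies only the inequality. Neither the Young-measure nor the selection route is carried out, so the argument is not closed at the one point where a new idea is needed. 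The paper's resolution is deterministic and avoids any further limit passage: it reparametrizes each plateau not by $s$ but by the cumulative $\ell^1$-mass $\upsilon(s)=\int_0^s|\beta(\varsigma)|\,\d\varsigma$, setting $\Upsilon^\tau(\varsigma)=\inf\{s\colon \upsilon(s)-\mathfrak V(\tau^-)>\varsigma\}$, $\bm m^\tau_\varsigma=\bm\nu_{\Upsilon^\tau(\varsigma)}$ and $u^\tau_i=\big(\beta_i\,|\beta|^\oplus\big)\circ\Upsilon^\tau$. With this clock one has $\sum_i|u^\tau_i|=1$ by construction wherever $|\beta|\ne 0$, and the exceptional set $\{|\beta|=0\}$ inside the plateau is invisible in the $\varsigma$-variable, since the pushforward of Lebesgue measure under $\Upsilon^\tau$ is $|\beta|\,\d s$. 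The price is one extra change of variables $\varsigma=\upsilon(s)-\mathfrak V(\tau^-)$ in each plateau integral, which is routine. (Note that this device still presupposes $\mathfrak V(\tau)-\mathfrak V(\tau^-)=\int_{I_\tau}|\beta|\,\d s$, i.e.\ that no $\ell^1$-mass is lost in the weak-$*$ limit of $|\beta^k|$ --- the very phenomenon you worry about --- so your concern is well placed; but the arc-length reparametrization, not a renormalized second limit, is the intended mechanism.) If you replace your fixed-speed parametrization of the jump arcs by this one, the rest of your argument goes through.
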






\medskip


\begin{proof}
In accordance with Theorem~\ref{th-relax}, there exist a control process $(\bm\nu,\alpha, \beta)$ of $(\hat S)$ such that
$(\bm\mu, \mathfrak U, \mathfrak V)$ are related to $(\bm\nu, \alpha, \beta)$
by the formulas
\[
\bm\mu_t=\bm\nu_{\xi^{\leftarrow}(t)},\quad
\mathfrak U(t)=\int_0^{\xi^\leftarrow(t)}\!\! \beta(s) \d s,\quad
\mathfrak V(t)=\int_0^{\xi^\leftarrow(t)}\left|\beta(s)\right|\d s,\quad
\xi(s)=\int_0^s\alpha(\varsigma)\d\varsigma.
\]
We observe that $\mathfrak U$ and $\mathfrak V$ are combinations of a continuous function (the Lebesgue integral of a measurable function with variable upper limit) and a $\mathbf{BV}$ function $\xi^\leftarrow$, which implies that $\Delta_{\mathfrak U}=\Delta_{\mathfrak V}\subseteq \Delta_{\xi^\leftarrow}$, and the (possibly, trivial) jumps of $\mathfrak U$ and $\mathfrak V$ at points $\tau \in \Delta_{\xi^{\leftarrow}}$ are calculated as follows:
$$
\mathfrak U(\tau)-\mathfrak U(\tau^-)=\int_{\xi^{\leftarrow}(\tau^-)}^{\xi^{\leftarrow}(\tau)} \beta(s) \, {\rm d}s, \quad \mathfrak V(\tau)-\mathfrak V(\tau^-)=\int_{\xi^{\leftarrow}(\tau^-)}^{\xi^{\leftarrow}(\tau)} \left|\beta(s)\right| \, {\rm d}s.
$$ 
At the same time, by the definition of $\xi^{\leftarrow}$, its jumps correspond to disjoint closed intervals $[\xi^{\leftarrow}(\tau^-), \xi^{\leftarrow}(\tau)]$ of constancy of $\xi$.

 In view of the assumption $\mathfrak V^{sc}=0$ (which immediately implies $\mathfrak U^{sc}=0$), we find out that there could be no set $\Omega \subset [0,S]$ satisfying the following three conditions: i) $\Omega$ is nowhere dense, ii) $\alpha=0$ $\mathcal L^1$-a.e. on $\Omega$, and iii) $\mathcal L^1(\Omega \cap\spt|\beta|)>0$.

 Let $\mathcal S$ denote the maximal in inclusion (which exists due to Zorn's lemma) subset of $[0,S]$ with properties i), ii), on which  $\alpha=|\beta|=0$ $\mathcal L^1$-a.e. Since the vector field $w_s$ of the reduced system $(\hat S)$ is linear in $(\alpha, \beta)$, we have
 $w_s=0$ for $\mathcal L^1$-a.e. $s \in \mathcal S$. Hence, the corresponding solution $s \mapsto \bm\nu_s[\alpha, \beta,\vartheta]$ stays constant on $\mathcal {S}$. This means that we can exclude $\mathcal S$ by an appropriate time rescaling such that the respective measure-valued arc remains continuous, or just assume $\mathcal S=\varnothing$.

Now, the set $\{s \in [0,S]: \, \alpha=0\}$ is nothing more than the unification of intervals $[\xi^{\leftarrow}(\tau^-), \xi^{\leftarrow}(\tau)]$ over $\tau \in \Delta_{\xi^{\leftarrow}}$, up to an $\mathcal L^1$-null set. Therefore, the remaining continuous part $\mathfrak U^c=\mathfrak U^{ac}$ of $\mathfrak U$ is represented as
$$
\mathfrak U^{ac}(t) =\int_{0}^{\xi^{\leftarrow}(t)} \chi_{\{\alpha>0\}}(s) \, \beta(s) \, {\rm d}s
$$
($\chi_{\{\alpha>0\}}$ stands for the characteristic function of the set $\{s \in [0,S]: \, \alpha(s)>0\}$, with a certain fixed representative of $\alpha$), and the classical Lebesgue theorem on the change of variables under the sign of the Lebesgue-Stieltjes integral gives:
\begin{multline}
\mathfrak U^{ac}(t) =\int_{0}^{\xi^{\leftarrow}(t)}  \beta(s) \, \alpha(s)^\oplus \, \big(\chi_{\{\alpha>0\}}(s)  \, \alpha(s) \, {\rm d}s\big)=\int_{0}^{\xi^{\leftarrow}(t)}  \beta(s)\,\alpha(s)^\oplus \, \d \xi(s)\\
=\int_{0}^{t}  \big(\beta \, \alpha^\oplus\big)\big|_{s=\xi^{\leftarrow}(\varsigma)} \, \d \varsigma\, 
\quad\Leftrightarrow \quad \dot{\mathfrak U}^{ac}=\big(\beta \, \alpha^\oplus\big)\circ\xi^{\leftarrow}.\label{dot-U}
\end{multline}
Here, the operation $^\oplus$ is defined as follows: $a^\oplus \doteq \left\{\begin{array}{ll}a^{-1}, & \mbox{if }a\neq 0, \\ 0, & \mbox{if }a=0.\end{array}\right.$

In what follows, we abbreviate $f_s(x)\doteq f_0(x) + (g*\bm\nu_s)(x)$.  Now, the definition of the distributional solution writes
$$
0=\int_0^{S}\int_{\mathbb R^n} \!\big(\partial_s \, \psi(s,x) + \big\langle \alpha(s) \, f_s(x) + \sum \beta_i(s) \, f_i(x), \nabla \, \psi(s,x)\big\rangle\Big) \, {\rm d} \bm\nu_s(x) \, {\rm d}s =I_1+
\!\!\sum_{\tau \in \Delta_{\mathfrak V}} \! I_2^\tau,
$$
where
$$
I_1\doteq \int_0^{S} \mathfrak I(s) \ \chi_{\{\alpha>0\}}(s) \, \alpha(s) \, {\rm d}s, \quad \mathfrak I(s) \doteq \int_{\mathbb R^n}  \mathrm q(s, x) \, {\rm d} \bm\nu_s(x),
$$
$$
\mathrm q(s, x)\doteq \alpha(s)^\oplus\partial_s \, \psi(s,x) + \big\langle f_s(x) + \sum \beta_i(s)\, \alpha(s)^\oplus \, f_i(x), \nabla \, \psi(s,x)\big\rangle
$$
(again, $(\alpha, \beta)$ is an arbitrary fixed representative), and
$$
I_2^\tau\doteq\int_{\xi^{\leftarrow}(\tau^-)}^{\xi^{\leftarrow}(\tau)} \int_{\mathbb R^n} \Big(\partial_s \, \psi(s,x) + \big\langle \sum \beta_i(s) \, f_i(x), \nabla \, \psi(s,x)\big\rangle\Big) \, {\rm d} \bm\nu_s(x) \, {\rm d}s.
$$
By the change of variable $s=\xi^{\leftarrow}(t)$ we derive:
$$
I_1=\int_{0}^{\xi^{\leftarrow}(T)}\mathfrak J(s)\, {\rm d}\xi(s)=\int_{0}^{T}\mathfrak J\big(\xi^{\leftarrow}(t)\big) \, {\rm d}t.
$$
Let $\varphi(t, x)\doteq \psi\big(\xi^{\leftarrow}(t),x\big)$, for all $t \in (0,T)$ and $x \in \mathbb R^n$, and note that, on $\big\{s \in [0,S]: \ \alpha(s)>0\big\}$, it holds
$$
\mathrm q(s, x)=\dot\xi^{\leftarrow}(t)\Big|_{t=\xi(s)}\partial_s \, \psi(s,x) + \big\langle  f_s(x) + \sum\dot{\mathfrak U}^{ac}_i\big(\xi(s)\big) \, f_i(x), \nabla \, \psi(s,x)\big\rangle,
$$
while $\partial_t \, \varphi(t, x)\doteq \frac{{\rm d}}{{\rm d}t} \, \psi\big(\xi^{\leftarrow}(t),x\big)=\dot\xi^{\leftarrow}(t)\, \partial_s \, \psi(s,x)\big|_{s=\xi^{\leftarrow}(t)}$ and $\nabla \, \varphi(t, x)=\nabla \, \psi(s,x)\big|_{s=\xi^{\leftarrow}(t)}$. In view of (\ref{dot-U}), we come to the representation
$$
I_1=\displaystyle\int_0^{T}\int_{\mathbb R^n}\!\!\Big(\partial_t \, \varphi(t,x) +
 \big\langle f_{\xi^\leftarrow(t)}(x) + \sum_{i=1}^m \dot{\mathfrak  U}^{ac}_i(t) \, f_i(x), \nabla \, \varphi(t,x)\big\rangle\Big)
 \, {\rm d}\bm\mu_t(x) \, {\rm d}t,
$$
and note that $f_{\xi^\leftarrow(t)}=f_0(x) + (g*\bm\mu_t)(x)$.

Set $\upsilon(s)=\displaystyle\int_{0}^{s} \left|\beta(\varsigma)\right| \, \d\varsigma$, $\Upsilon^\tau(\varsigma)\doteq \inf\{s: \, \upsilon(s)-\mathfrak V(\tau^-)>\varsigma\}$, $\varphi^\tau(\varsigma, x)\doteq \psi\big(\Upsilon^\tau(\varsigma),x\big)$, for all $\varsigma \in [0,T_\tau]$ and $x \in \mathbb R^n$. As is simply checked, the collection $\Phi=(\varphi, \{\varphi^\tau\}_{\tau \in D_{\mathfrak V}})$ meets all the hypotheses of the theorem.

Introduce functions $u^\tau_i$, $i=\overline{1,m}$, $\tau \in \Delta_{\mathfrak V}$, as $u^\tau_i\doteq (\beta_i \, |\beta|^\oplus) \circ \Upsilon^\tau$, and curves $\varsigma \mapsto \bm m_\varsigma^\tau$, $\tau \in \Delta_{\mathfrak V}$, by compositions
$\bm m_\varsigma^\tau=\bm\nu_{\Upsilon^\tau(\varsigma)}$, $\varsigma \in [0,T_\tau]$.  Clearly, $\bm m_0^\tau= \bm\nu_{\xi^{\leftarrow}(\tau^-)}\doteq \bm\mu_{\tau^-}$ and $\bm m_{T_\tau}^\tau= \bm\nu_{\xi^{\leftarrow}(\tau)}\doteq \bm\mu_\tau$, by construction. 

Substitution $\varsigma = \upsilon(s)-\mathfrak V(\tau^-)$ maps $[\xi^{\leftarrow}(\tau^-), \xi^{\leftarrow}(\tau)]$ to $[0, T_\tau]$, and its pseudo-inverse $s=\Upsilon^\tau(\varsigma)$ gives:
$$
I_2^\tau=\int_{\Upsilon^\tau(0)}^{\Upsilon^\tau(T_\tau)} \mathfrak J^\tau(s) \, {\rm d}s=\int_{0}^{T_\tau} \mathfrak J^\tau\big(\Upsilon^\tau(\varsigma)\big) \, {\rm d}\big(\Upsilon^\tau\big)^{-1}(\varsigma)=\int_{0}^{T_\tau} \mathfrak J^\tau\big(\Upsilon^\tau(\varsigma)\big) \, \big|\beta(s)\big|^\oplus\Big|_{s=\Upsilon^\tau(\varsigma)} \, \d\varsigma,
$$
where
$$
\mathfrak J^\tau(s)\doteq \int_{\mathbb R^n} \Big(\partial_s \, \psi(s,x) + \big\langle \sum \beta_i(s) \, f_i(x), \nabla \, \psi(s,x)\big\rangle\Big) \, {\rm d} \bm\nu_s(x).
$$
Again, noted that $\partial_\varsigma \, \varphi^\tau(\varsigma, x)\doteq \frac{{\rm d}}{{\rm d}\varsigma} \, \psi\big(\Upsilon^\tau(\varsigma),x\big)=\frac{{\rm d} \Upsilon^\tau(\varsigma)}{{\rm d}\varsigma}\, \partial_s \, \psi(s,x)\big|_{s=\Upsilon^\tau(\varsigma)}$ and $\nabla \, \varphi^\tau(\varsigma, x)=\nabla \, \psi(s,x)\big|_{s=\Upsilon^\tau(\varsigma)}$, we come to
$$
I_2^\tau= \int_{0}^{T_\tau}\int_{\mathbb R^n} \Big(\partial_\varsigma \, \varphi^\tau(\varsigma,x) +
\big\langle\sum_{i=1}^m  u^\tau_i(\varsigma) \, f_i(x), \nabla \, \varphi^\tau(\varsigma,x)\big\rangle\Big)
\, {\rm d}\bm m^\tau_\varsigma(x) \, {\rm d}\varsigma,
$$
as desired. It remains to observe that $u^\tau_i$, $i=\overline{1,m}$, enjoy conditions (\ref{u-tau-constraint}) by their definition, which finishes the proof.
\end{proof}
\begin{remark}
In the tradition of impulsive control theory \cite{Bressan2014,Miller2003,Arutyunov2010,Karamzin2015}, a collection $\{\bm m^\tau\}_{\tau \in \Delta_{\mathfrak V}}$, as in Proposition~\ref{MDEinSM}, is called the \emph{graph completion} of a discontinuous curve $t \mapsto \bm\mu_t$. The main idea behind the notion of graph completion is to regard each jump of $\bm\mu$ as a sort of ``fast motion'', connecting the one-sided limits $\bm\mu_{\tau^-}$ and $\bm\mu_{\tau}$ at a point $\tau \in \Delta_{\mathfrak V}$. From (\ref{MCE}), we see that the time-lapse representation of such a fast motion in given by the curve $\varsigma \mapsto \bm m^\tau_\varsigma$ being a distributional solution on the interval $[0,T_\tau]$ of a \emph{linear} PDE, to be named the \textit{limit transport equation}:
\begin{eqnarray}
&\displaystyle\int_{\mathbb R^n}\varphi^\tau(T_\tau, x)\d {\bm\mu_{\tau}}(x)-\int_{\mathbb R^n}\varphi^\tau(0, x)\d{\bm\mu_{\tau^-}}(x)=\nonumber\\
&\displaystyle\int_{0}^{T_\tau}\int_{\mathbb R^n} \Big(\partial_\varsigma \, \varphi^\tau(\varsigma,x) +
\big[\sum_{i=1}^m  u^\tau_i(\varsigma) \, f_i(x)\big] \cdot \nabla \, \varphi^\tau(\varsigma,x)\Big)
\, {\rm d} \bm m^\tau_\varsigma(x) \, {\rm d}\varsigma.
\label{limit-cont-eq}
\end{eqnarray}

We also note that the actual input of the relaxed system (\ref{limit-endpoint}), (\ref{MCE}) is not only the function $\mathfrak U$ but the collection $(\mathfrak U,\mathfrak V, \{u^\tau\})$, respectively consisting of an $\mathbb R^m$-valued function $\mathfrak U$ of bounded variation (or rather, vector measure on $[0,T]$), accompanied by a certain majorant $\mathfrak V$ of its total variation and some additional ``attached'' measurable controls $u^\tau$, which drive the above mentioned fast motions $\varsigma \mapsto\bm m^\tau_\varsigma$. It is natural to call the collections $(\mathfrak U,\mathfrak V, \{u^\tau\})$ satisfying (\ref{u-tau-constraint})  \emph{impulsive controls}.  

\end{remark}

\subsection{The case of commutative vector fields}\label{sec:comm}


Looking at (\ref{limit-cont-eq}), we observe that the nonlocal part $g*\mu$ of the driving vector field does not participate in the phase of jump (during fast motions). 

As it is well-known, a solution of (\ref{limit-cont-eq}) can be represented in terms of the pushforward of the initial measure $\mu_{\tau-}$ through the flow $\Phi^\tau$ of the respective characteristic system
\begin{equation}
\dot \varkappa=\sum_{i=1}^m  u^\tau_i \, f_i(\varkappa), \quad \varkappa(0)=x,
    \label{limit-characts}
\end{equation}
that is,
\begin{equation*}
\bm m^\tau_\varsigma={\Phi^\tau_{0,\varsigma}}_\sharp \, \bm\mu_{\tau-}.\label{limit-rep}
\end{equation*}
Now, assumed that vector fields $f_i$ commute, i.e.,
$$
[f_i,f_j]\doteq  (f_i)_x \, f_j - (f_j)_x \,  f_i =0, \quad i, j=\overline{1,m}, \ i\neq j,
$$
by the well-known Frobenius theorem, a solution of (\ref{limit-characts}) takes the form \cite{Brockett} (see also \cite[Lemma 2.7 and Section~4.2.2]{Miller2003}):\footnote{By the Frobenius theorem, this representation takes place only for sufficiently small $t>0$, but assumptions $(\mathbf A_1)$ enable to extend it to the whole $[0,T_\tau]$.}
$$
\varkappa^\tau(\varsigma)=X_n\Big(F_{u_n^\tau}(\varsigma), X_{n-1}\big(F_{u_{n-1}^\tau}(\varsigma), \ldots, X_1(F_{u_1^\tau}(\varsigma),x)\big)\Big),
$$
where 
$\varsigma\mapsto X_i(\varsigma, x)$ is a solution of
\begin{equation}
\dot \varkappa=f_i(\varkappa), \quad \varkappa(0)=x.
    \label{limit-charact-comp}
\end{equation}
Thus, we come to the following representation of jump exit points $\bm\mu_\tau\doteq\bm m^\tau_{T_\tau}$ of $\bm\mu$:
\begin{equation}
\bm m^\tau_{T_\tau}={\big(\Psi^{n}_{0,\omega^\tau_n}\circ \Psi^{n-1}_{0,\omega^\tau_{n-1}} \circ \ldots \circ \Psi^{1}_{0,\omega^\tau_1}\big)}_\sharp \,\bm\mu_{\tau-},\label{limit-repr-int}
\end{equation}
where $\Psi^{i}_{0, \varsigma}$ 
denotes the flow $x \mapsto  X_i(\varsigma, x)$ of (\ref{limit-charact-comp}) along $[0,\varsigma]$, and
$
\omega^\tau_i \doteq \displaystyle \int_0^{T_\tau} u_i^\tau(\varsigma) \, \d \varsigma=\mathfrak U_i(\tau)-\mathfrak U_i(\tau^-)$ (recall (\ref{u-tau-constraint})). We observe that (\ref{limit-repr-int}) does not involve the actual information about fast-time controls $u^\tau$, and depends only on $\mathfrak U$. This means that, in the case of commutative control vector fields, 
the limiting curve $\bm\mu$ does not depend on the approximating sequence $\{\bm\mu^k\}$.  The picture is, thus, pretty same as in the finite-dimensional case \cite[Section~4.2.2]{Miller2003}.

\subsection{Relaxation of the optimal control problem}\label{sec:relaxed-problem}

Now we shall state the following \emph{optimal impulsive control problem}:
\[
\min\left\{\int\ell\d{\bm\mu_T}\;\colon\; \ \bm\mu \in \overline{\mathcal M}\right\},
\tag{$\bar P$}
\]
which, by the definition of $\overline{\mathcal M}$, is an extension of problem $(P)$. Based on the results of Subsections~\ref{sec:time} and \ref{sec:imp}, 
one transforms $(\overline{P})$ into the following optimal control problem stated on trajectories of the reduced system $(\hat{S})$:  
\[
\min\left\{\int\ell\d{\bm\nu_S}\;\colon\; \bm\nu\;\text{is a trajectory of  $(\hat{S})$}\right\}.
\tag{$\hat P$}
\]

\begin{theorem}\label{thm:solexist} Assume that $(\mathbf A_1)$ holds, and $\ell$ is sublinear and continuous. Then, problems $(\bar P)$ and $(\hat P)$ have solutions. Moreover,
$\min(\bar P) = \min(\hat P) = \inf(P)$.
\end{theorem}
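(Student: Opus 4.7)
The plan is to first settle existence for the reduced problem $(\hat P)$, then to piggyback on the reduction machinery of \S\ref{sec:time} to deduce existence for $(\bar P)$ together with the chain of equalities. For $(\hat P)$, observe that $\hat{\mathcal U}$ is a bounded, convex, norm-closed subset of $\L\infty([0,S];\mathbb{R}^{m+1})$ (it lies in the unit ball over the compact convex set $A$), and the linear functional $(\alpha,\beta)\mapsto \int_0^S\alpha\d s$ is weak-$*$ continuous since $1\in\L1([0,S])$. By Banach--Alaoglu, $\hat{\mathcal U}$ is weak-$*$ compact. Remark~\ref{remark-VF} provides continuity of $(\alpha,\beta)\mapsto \bm\nu[\alpha,\beta,\vartheta]$ from $\hat{\mathcal U}$ (weak-$*$) into $\C0([0,S];\mathcal P_1)$, whence the evaluation $(\alpha,\beta)\mapsto \bm\nu_S$ is continuous into $\mathcal P_1$. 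Since $\ell$ is sublinear and continuous, the criterion \eqref{eq:weakconv} makes $\mu\mapsto\int\ell\d\mu$ continuous on $\mathcal P_1$, and Weierstrass delivers a minimizer of $(\hat P)$.

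Next I would match $(\hat P)$ with $(\bar P)$ via Theorems~\ref{th-relax} and~\ref{th-relax-inverse}. Both directions produce the same terminal measure: if $(\bm\nu,\alpha,\beta)$ is a process of $(\hat S)$, formulas \eqref{limit-relations} yield a generalized solution $\bm\mu\in\overline{\mathcal M}$ with $\bm\mu_T=\bm\nu_{\xi^\leftarrow(T)}=\bm\nu_S$ (since $\xi^\leftarrow(T)=S$); conversely, every $\bm\mu\in\overline{\mathcal M}$ arises, via Definition~\ref{def-solution} together with Remark~\ref{right-continuity}(i) applied to an approximating sequence in $\mathcal M$, as $\bm\nu_{\xi^\leftarrow(\cdot)}$ for some process of $(\hat S)$. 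This bijectively transports attainable values of $\int\ell\d\bm\nu_S$ onto attainable values of $\int\ell\d\bm\mu_T$, so existence for $(\hat P)$ immediately yields existence for $(\bar P)$ together with $\min(\bar P)=\min(\hat P)$.

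It then remains to show $\min(\bar P)=\inf(P)$. Since $\mathcal M\subset\overline{\mathcal M}$, the inequality $\min(\bar P)\leq\inf(P)$ is automatic. For the reverse, take any $\bm\mu\in\overline{\mathcal M}$ and pick $\{\bm\mu^k\}\subset\mathcal M$ with $\bm\mu^k\rightharpoondown\bm\mu$, which exists by Definition~\ref{def-solution}. The convergence $\rightharpoondown$ explicitly includes the boundary point $t=T$, so $\bm\mu^k_T\to\bm\mu_T$ in $\mathcal P_1$; the sublinear continuity of $\ell$ then gives $\int\ell\d\bm\mu^k_T\to\int\ell\d\bm\mu_T$, so $\int\ell\d\bm\mu_T\geq\inf(P)$, and taking the infimum over $\bm\mu\in\overline{\mathcal M}$ yields $\min(\bar P)\geq\inf(P)$.

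The one step that deserves care is precisely this boundary evaluation at $t=T$: without the explicit inclusion of $T$ in the definition of $\rightharpoondown$, the terminal-cost transfer could fail, and none of the three identities would close. Every other ingredient is a direct invocation of results already in place (Banach--Alaoglu, the continuity statement of Remark~\ref{remark-VF}, the equivalence \eqref{eq:weakconv} for $\mathcal P_1$-convergence, and the two reduction theorems), so the proof is essentially an orchestration rather than a new computation.
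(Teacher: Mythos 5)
Your proof is correct and follows essentially the same route as the paper: Weierstrass via weak-$*$ compactness of $\hat{\mathcal U}$ and continuity of the input-output map for $(\hat P)$, then Theorems~\ref{th-relax} and~\ref{th-relax-inverse} to transfer existence and the value to $(\bar P)$, and the definition of $\overline{\mathcal M}$ for $\min(\bar P)=\inf(P)$. You merely spell out the last equality (both inequalities, with the convergence at $t=T$ built into $\rightharpoondown$) which the paper leaves as a one-line remark.
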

\begin{proof}
The cost function in $(\hat P)$, i.e., $(\alpha,\beta)\mapsto\int\ell\d{\bm\nu_S[\alpha,\beta,\vartheta]}$, can be expressed as the composition of maps
\[
(\alpha,\beta)\mapsto\bm\nu_S[\alpha,\beta,\vartheta]
\quad\text{and}\quad \nu\mapsto \int\ell\d\nu
\]
The first one is continuous as $\Lw\infty([0,T];\mathbb R^{m+1})\mapsto\mathcal{P}_1$ due to Theorem~\ref{thm:wellposed}
applied to $(\hat S)$; the second one is continuous as
$\mathcal P_1\mapsto\mathbb R$ by an equivalent definition of $W_1$ convergence (see \S\ref{sec:measure}).
We have mentioned, in the proof of Theorem~\ref{th-relax},
that $\hat{\mathcal U}$ is compact in $\Lw\infty([0,T];\mathbb R^{m+1})$. Thus, $(\hat P)$ admits a solution by the Weierstrass theorem. Now, according to Theorems~\ref{th-relax} and \ref{th-relax-inverse}, $(\bar P)$ has a solution as well, moreover
$\min(\hat P) = \min(\bar P)$. Finally, $\min(\bar P)=\inf(P)$ follows from the very definition of $\overline{\mathcal{M}}$.
\end{proof}

Theorems~\ref{th-relax} and~\ref{thm:solexist} imply that any solution of $(\hat P)$ can be transformed to a solution of $(\bar P)$ (or rather to a minimizing sequence of $(P)$); such a transformation can be made by explicit formulas similar to \cite[Proof of Theorem~4.7]{Miller2003}. In turn, minimizers for $(\hat P)$ can be characterized by a necessary optimality condition which 
we shall derive in the next section.

\section{Necessary optimality condition}\label{sec:necessary-cond}

The necessary optimality condition that we are going to establish (Theorem~\ref{thm:pmp}) can be formally deduced from a general result by B. Bonnet and F. Rossi~\cite{BonnetRossi2019}, who
reproduced the standard proof of Pontryagin's Maximum Principle~\cite{BressanPiccoliBook} (based on the needle variation) in the context of control systems in the space of probability measures.  We propose an alternative (less demanding, in our opinion) approach employing discretization of the initial measure and the application of Ekeland's variational principle.

Hence, we start our analysis by assuming that the initial distribution is a discrete measure. In this case, $(\hat P)$ boils down to a finite-dimensional optimal control problem. For this problem, we can write down an approximate version of Pontryagin's Maximum Principle ($\epsilon$-Maximum Principle). 

\subsection{Approximate Pontryagin's Maximum Principle}\label{sec:approx-PMP}

Consider the following optimal control problem
\[
\tag{$OCP$}
\begin{cases}
  \text{Minimize}\quad \mathcal{I}[\alpha, \beta] \doteq  q\left(x(S)\right)\quad\text{subject to}\\
  \dot x(t) = f\left(x(t),\alpha(t),\beta(t)\right),\quad t\in [0,S],\quad
  x(0)=x_0,\\
  \displaystyle(\alpha,\beta)\in 
  \hat{\mathcal{U}}, 
\end{cases}
\]
where 
\begin{gather*}
f(x,a,b) \doteq a \, g_0(x) + \sum_{i=1}^m b_i \, g_i(x),
\end{gather*}
and $g\colon \mathbb R^n \mapsto \mathbb R$ are given. 

Suppose that $f\colon \mathbb{R}^n\times A\mapsto\mathbb{R}^n$ and $q\colon\mathbb{R}^n\mapsto\mathbb{R}$ are $\C1$ in $x$, and there exists $K>0$ such that
\begin{equation}
\label{eq:ek-assumptions}    
\big|q(x)\big| + \big|\nabla_x\, q(x)\big| + \big|f(x,a,b)\big| + \big|\nabla_x\,f(x,a,b)\big|\leq K \quad \forall (x,a,b)\in \mathbb{R}^n\times A.
\end{equation}
The Hamiltonian of $(OCP)$ takes the form
\[
H(x,p,\lambda,a,b) = a\lambda + a p\cdot g_0(x) + \sum_{i=1}^m b_i \, p\cdot g_i(x),
\]
where $p\in \mathbb{R}^n$ and $\lambda \in \mathbb{R}$.
Simple computations give:
\[
\max_{(a,b)\in A} H\left(x,p,\lambda,a,b\right) = \max\big\{p\cdot g_0(x)+\lambda,\max_{1\leq i\leq m}|p\cdot g_i(x)|\big\}.
\]

\begin{theorem}[Pontryagin's $\epsilon$-maximum principle]
\label{thm:ekeland}
Given $\epsilon>0$, let $(\bar\alpha,\bar\beta)\in\hat{\mathcal{U}}$ satisfy
\begin{equation}
    \label{eq:epsopt}
\mathcal{I}[\bar\alpha,\bar\beta]\leq \inf(OCP) + \epsilon.
\end{equation}
Then there exists another pair $(\alpha^\epsilon,\beta^\epsilon)\in\hat{\mathcal{U}}$ such that
\begin{enumerate}[label=\emph{(\arabic*)}]
    \item $\mathcal{I}[\alpha^\epsilon,\beta^\epsilon]\leq \mathcal{I}[\bar\alpha,\bar\beta]$;
    \item $\|\bar\alpha -\alpha^\epsilon\|_\infty + 
    \sum_{i=1}^m\|\bar\beta_i -\beta_i^\epsilon\|_\infty\leq \sqrt{\epsilon}$;
    \item 
    there exist $\lambda^\epsilon\in[\lambda_-,\lambda_+]$ and absolutely continuous arcs 
$x^\epsilon,p^\epsilon: \ [0,T]\mapsto \mathbb{R}^n$ satisfying the \textbf{Hamiltonian system}
\[
\begin{cases}
\dot x(t) = f\left(x(t),\alpha^\epsilon(t),\beta^\epsilon(t)\right)
& x(0)=x_0,\\
\dot p(t) = -p(t) \, \nabla_x \, f\left(x(t),p(t),\alpha^\epsilon(t),\beta^\epsilon(t)\right),
& p(S)=-\kappa\nabla q\left(x(T)\right),
\end{cases}
\]
together with the \textbf{approximate maximum condition}
\[
\int_0^S\!
\Big[
\max_{(a,b)\in A}H\left(x^\epsilon(t),\kappa^{-1}p^\epsilon(t),\lambda^\epsilon,a,b\right) -
H\left(x^\epsilon(t), \kappa^{-1}p^\epsilon(t), \lambda^\epsilon, \alpha^\epsilon(t),\beta^\epsilon(t)\right)
\Big]\!\d t\leq  2\sqrt{\epsilon}.
\]
Here $\lambda_-$, $\lambda_+$ are finite constants depending only on $K$, $S$, and $T$, while $\kappa$ is an arbitrary positive real.
\end{enumerate}
\end{theorem}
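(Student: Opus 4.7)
The plan is to derive the $\epsilon$-maximum principle by applying Ekeland's variational principle to $\mathcal{I}$ on $\hat{\mathcal{U}}$ equipped with a suitable complete metric, and then extracting a Hamiltonian first-order condition from the resulting quasi-minimization inequality. First I would endow $\hat{\mathcal{U}}$ with a metric compatible with the weak-$*$ topology that it inherits from $\L{\infty}$; a convenient choice is the ``cumulative-distribution'' distance
\[
d\big((\alpha_1,\beta_1),(\alpha_2,\beta_2)\big)\doteq \sup_{t\in[0,S]}\left|\int_0^t\!\big((\alpha_1,\beta_1)-(\alpha_2,\beta_2)\big)(s)\,\d s\right|,
\]
in which $\hat{\mathcal{U}}$ is compact (hence complete) by the proof of Theorem~\ref{th-relax}, and $\mathcal{I}$ is continuous thanks to the continuous dependence of ODE solutions on controls under~\eqref{eq:ek-assumptions} together with the continuity of $q$.

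Next I would apply Ekeland's principle with radius $\sqrt{\epsilon}$ to the $\epsilon$-minimizer $(\bar\alpha,\bar\beta)$ of~\eqref{eq:epsopt}. This yields $(\alpha^\epsilon,\beta^\epsilon)\in\hat{\mathcal{U}}$ with $\mathcal{I}[\alpha^\epsilon,\beta^\epsilon]\leq\mathcal{I}[\bar\alpha,\bar\beta]$, satisfying the required closeness bound in the chosen metric (which is exactly the expression~(2) of the statement), and, most importantly, the quasi-minimization property
\[
(\alpha^\epsilon,\beta^\epsilon)=\argmin_{(\alpha,\beta)\in\hat{\mathcal{U}}}\Big\{\mathcal{I}[\alpha,\beta]+\sqrt{\epsilon}\,d\big((\alpha,\beta),(\alpha^\epsilon,\beta^\epsilon)\big)\Big\}.
\]

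To handle the isoperimetric constraint $\int_0^S\alpha\,\d s=T$ I would augment the state with $z(t)\doteq\int_0^t\alpha(s)\,\d s$ so that $z(0)=0$, $z(S)=T$ becomes a terminal constraint and the pointwise constraint $(\alpha,\beta)\in A$ is the only remaining restriction on controls. Introduce the adjoint variable $p^\epsilon$ satisfying the stated backward Cauchy problem with $p^\epsilon(S)=-\kappa\nabla q(x^\epsilon(S))$, together with a constant multiplier $\lambda^\epsilon$ associated with $z$ (constant because $z$ does not enter the dynamics). A standard integration-by-parts/Duhamel linearization on the Hamiltonian system, combined with the quasi-minimization inequality above and the $\C1$ bound~\eqref{eq:ek-assumptions}, converts the inequality
\[
\mathcal{I}[\alpha,\beta]-\mathcal{I}[\alpha^\epsilon,\beta^\epsilon]+\sqrt{\epsilon}\,d\big((\alpha,\beta),(\alpha^\epsilon,\beta^\epsilon)\big)\geq 0
\]
into an integral estimate of the form
\[
\int_0^S\!\Big[H\big(x^\epsilon,\kappa^{-1}p^\epsilon,\lambda^\epsilon,\alpha^\epsilon,\beta^\epsilon\big)-H\big(x^\epsilon,\kappa^{-1}p^\epsilon,\lambda^\epsilon,\alpha,\beta\big)\Big]\d t\geq -2\sqrt{\epsilon}
\]
valid for every $(\alpha,\beta)\in\hat{\mathcal{U}}$. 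A measurable selection argument on the compact set $A$ produces a control whose pointwise value maximizes $H$, and substituting it gives the approximate maximum condition in item~(3).

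The main obstacle is the interplay between the pointwise control constraint $(\alpha,\beta)(t)\in A$ and the global integral constraint on $\alpha$: one has to make sure that the linearized needle-type perturbations remain in $\hat{\mathcal{U}}$, which forces compensating variations. Handling this through the augmented state and the constant multiplier $\lambda^\epsilon$ is clean, but then one must also obtain the \emph{uniform-in-$\epsilon$} bounds $\lambda_-\leq\lambda^\epsilon\leq\lambda_+$. These come from an \emph{a priori} normality argument: the linearized map $\delta\alpha\mapsto\int_0^S\delta\alpha\,\d s$ is surjective onto $\mathbb{R}$ under interior variations inside the convex set $A$, so $\lambda^\epsilon$ can be normalized together with $\kappa$, and its bounds depend only on the uniform bounds $K$, $S$, $T$ provided by~\eqref{eq:ek-assumptions}. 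This normality/boundedness step is the technically subtle point on which the whole conclusion rests.
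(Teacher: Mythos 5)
Your overall skeleton (Ekeland's variational principle on $\hat{\mathcal U}$, then a multiplier for the isoperimetric constraint $\int_0^S\alpha\,\d s=T$) is the same as the paper's, but two steps as written do not go through. First, the metric. You apply Ekeland in the cumulative-distribution distance $d\big((\alpha_1,\beta_1),(\alpha_2,\beta_2)\big)=\sup_t\big|\int_0^t(\cdot)\big|$ and assert that the resulting closeness estimate ``is exactly the expression (2) of the statement''. It is not: item (2) is an $\L\infty$-norm bound, and $d$ is strictly weaker than the $\L\infty$ distance (a rapidly oscillating difference has tiny $d$ but $\L\infty$ norm of order one), so Ekeland in $d$ only localizes $(\alpha^\epsilon,\beta^\epsilon)$ weakly and item (2) is lost. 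The paper instead equips $\hat{\mathcal U}$ with the $\L\infty$ \emph{norm} topology (complete because $\hat{\mathcal U}$ is norm-closed; $\mathcal I$ is norm-continuous under \eqref{eq:ek-assumptions}), which delivers (2) directly; the weak-$*$ compactness you invoke is irrelevant here, since Ekeland needs only completeness.

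Second, the passage to the pointwise maximum condition. After linearization you obtain an inequality valid only for competitors $(\alpha,\beta)\in\hat{\mathcal U}$, i.e.\ still subject to $\int_0^S\alpha\,\d s=T$; the control produced by your measurable-selection step (pointwise maximizer of $H$ over $A$) need not satisfy that constraint and therefore cannot be substituted. You correctly sense that this is ``the technically subtle point'', but the normality gesture you offer does not produce the multiplier or its uniform bounds. The paper resolves this by first reducing, via Filippov's lemma, the constrained maximization $\max_{(\alpha,\beta)\in\hat{\mathcal U}}\int_0^S p^\epsilon\cdot f\,\d t$ to a scalar isoperimetric problem in $\alpha$ alone, and then applying the Kuhn--Tucker-type Lemma~\ref{lem:kt} with the explicit test controls $\alpha\equiv 0$ and $\alpha\equiv 1$ (for which $\psi=-T<0$ and $\psi=S-T>0$): normality ($\eta=1$) and the two-sided bound $\lambda_-\le\lambda^\epsilon\le\lambda_+$, with constants depending only on $K$, $S$, $T$, fall out of the explicit Lagrange-multiplier estimate \eqref{eq:lbounds}. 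Also note that the paper's convex variation $(\alpha^\epsilon,\beta^\epsilon)+\tau\big((\alpha,\beta)-(\alpha^\epsilon,\beta^\epsilon)\big)$ (legitimate since $\hat{\mathcal U}$ is convex and $f$ is affine in the controls) replaces your needle variations and makes the compensation issue disappear at the linearization stage; and the reduction to $\kappa=1$ at the outset (linearity of the adjoint equation, affinity of $H$ in $p$) is a small step you omit but would need for the statement as phrased.
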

The proof of Theorem~\ref{thm:ekeland} can be found in Appendix~\ref{Append2}.  
\medskip

A pair $(\alpha,\beta)$ is said to be \emph{$\epsilon$-optimal} if it meets inequality \eqref{eq:epsopt}, and 
\emph{$\epsilon$-extremal} if it satisfies assumption (3) of Theorem~\ref{thm:ekeland}. In this terminology, the classical Pontryagin Maximum Principle says that any $0$-optimal pair $(\alpha,\beta)$ is $0$-extremal.







\subsection{Discrete initial distribution}\label{sec:discrete}



From now on, in addition to $(\mathbf A_1)$, we assume the following:
\begin{itemize}
  \item[$(\mathbf A_2)$] $g,f_{i}\in \C1(\mathbb R^n; \mathbb R^n)$, \(
    i=\overline{0,m} \); $g(-x)=-g(x)$ for all $x\in \mathbb R^n$; for any compact
    \( K\subset \mathbb{R}^{n} \) there exists \( L_{K}>0 \) such that
    \begin{displaymath}
      \left|\nabla g(x)-\nabla g(x')\right|\leq L_{K}|x-x'|,\quad
      \left|\nabla f_{i}(x)-\nabla f_{i}(x')\right|\leq L_{K}|x-x'|,\quad i=\overline{0,m},
    \end{displaymath}
    for all \( x,x'\in K \).
    \item[$(\mathbf A_3)$] $\ell \in \C1(\mathbb R^n; \mathbb R)$ and $|\ell(x)| + |\nabla\ell(x)|\leq C$ for all $x\in\mathbb{R}^n$.
\end{itemize}

\begin{remark}
Any $g$ generated by a smooth interaction potential $U\colon\mathbb R^+\to \mathbb R$ by the rule
\[
g(x) = \nabla_x U\left(|x|^2_2\right),\quad (\,|\cdot|_2\;\text{is the Euclidean norm}\,),
\]
satisfies $(\mathbf A_2)$. A typical example is the so called attraction-repulsion potential 
    $U(r) = -c_Ae^{-r/l_A} + c_Re^{-r/l_R}$
  with \( c \doteq c_R/c_A > 1 \), \( l \doteq l_R/l_A < 1 \), \( cl^2<1 \) (see~\cite{Mogilner1999}).
\end{remark}

Suppose that $\nu_0 = \vartheta = \displaystyle\frac{1}{N}\sum_{k=1}^N\delta_{x_k}$, where all $x_k\in \mathbb{R}^n$ are distinct points.
Then (see Proposition~\ref{prop:discrete}) the reduced equation~\eqref{red-conteq}, \eqref{hat-v} is equivalent to the following system of ODEs
\begin{equation}
    y_k'=\alpha \Big(f_0(y_k) + \frac{1}{N}\sum_{j=1}^N g(y_k-y_j)\Big)+\sum_{i=1}^m\beta_if_i(y_k),\quad y_k(0)=x_k,\quad k=\overline{1,N},
\label{y1}
\end{equation}
(prime denotes the derivative in $s$), while $(\hat P)$
turns into the finite-dimensional optimal control problem $(\hat P_N)$:
\[
\min\left\{\frac{1}{N}\sum_{k=1}^N\ell\left(y_k(S)\right)\colon\;  y\in \C0([0,S];\mathbb{R}^{nN})\;\text{ satisfies}\;~\eqref{y1},\;\eqref{red-control-constr}\right\}.
\]

Now, we shall apply the $\epsilon$-Maximum Principle (Theorem~\ref{thm:ekeland}) to problem $(\hat P_N)$. The Hamiltonian $\hat H_N$ of $(\hat P_N)$ takes the form
\begin{align*}
\hat H_N(y, p, \lambda, a, b)=
a\lambda + a\bigg(\sum_k p_k f_0(y_k) + \frac{1}{N}\sum_{k,j}p_k g(y_k-y_j) \bigg)
+
\sum_{i} b_i\sum_k p_k f_i(y_k),
\end{align*}
where $\lambda\in\mathbb{R}$ and $p = (p_1,\ldots,p_N)\in \mathbb{R}^{nN}$.
By $(\mathbf{A}_2)$, we have
\begin{align*}
\sum_{k,j}p_kg(y_k-y_j) &= \frac{1}{2}
\bigg[
  \sum_{k,j}p_kg(y_k-y_j) + \sum_{k,j}p_jg(y_j-y_k)
\bigg]\\
&= \frac{1}{2}\sum_{k,j}(p_k-p_j)g(y_k-y_j).
\end{align*}
Thus, $\hat H_N$ can be rewritten in the following (``more symmetric'') form:
\begin{align*}
\hat H_N(y, p, \lambda, a,b)&=a\lambda
+ \frac{a}{2N}\sum_{k,j}(p_k-p_j)g(y_k-y_j) +
a\sum_kp_kf_0(y_k) + \sum_{i} b_i\sum_k p_k f_i(y_k).
\end{align*}
Each term \( c_{kj}=c_{jk} \doteq  (p_k-p_j)g(y_k-y_j) \)
appears in $\hat H$ exactly two times. Therefore, the sum of all terms
containing \( y_k \) is
\begin{displaymath}
\frac{a}{N}\sum_j(p_k-p_j) g(y_k-y_j)
+ p_k\big(af_0(y_k)
 + \sum_{i} b_i f_i(y_k)\big).
\end{displaymath}
This enables us to calculate the partial derivative of $\hat H_N$ in $y_k$ as
\begin{align*}
  \nabla_{y_k}\hat H_N = 
    \frac{a}{N}\sum_j(p_k-p_j)\nabla g(y_k-y_j)+
    p_k\big(a \nabla f_0(y_k)
 + \sum_{i} b_i \nabla f_i(y_k)\big).
\end{align*}
Thus, the Hamiltonian system takes the form
\begin{equation}
\label{eq:dhamilton}
  \begin{cases}
    y_k' = \frac{\alpha}{N}\sum_j g(y_k-y_j)+\alpha f_0(y_k) + \sum_i \beta_i f_i(y_k),\\[0.4cm]
    p_k' = -
  \frac{\alpha}{N}\sum_j (p_k-p_j) \nabla g(y_k-y_j)
  -p_k\big(\alpha \nabla f_0(y_k) 
   +\sum_{i}\beta_i \nabla f_i(y_k)\big),
  \end{cases}
  \quad
  k=\overline{1,N}.
\end{equation}
Recall the initial condition:
\begin{equation}
\label{eq:dinit}
y_k(0) = x_k, \quad k=\overline{1,N}.
\end{equation}
As for the terminal condition, we have certain freedom in selecting it (see Appendix~\ref{Append2}).
For our preference, we choose the following one:
\begin{equation}
\label{eq:dterm}
p_k(S) = -\nabla\ell\left(y_k(S)\right), \quad 
  k=\overline{1,N}.
\end{equation}
This corresponds to the choice $\kappa=N$ in Theorem~\ref{thm:ekeland}, in which case the approximate maximum condition turns to contain the corrected Hamiltonian
$\hat H_N(y,p/N,\lambda,a,b)$.

Taken $(\alpha,\beta)\in \hat{\mathcal{U}}$, let $(y,p)$ be the associated solution of the Hamiltonian system~\eqref{eq:dhamilton} satisfying boundary conditions~\eqref{eq:dinit},~\eqref{eq:dterm}. Clearly, by setting
\begin{equation}
  \label{eq:regsol}
\bm\gamma^N_s\doteq\frac{1}{N}\sum_{j=1}^N\delta_{\left(y_j(s),p_j(s)\right)},
\end{equation}
we can rewrite the Hamiltonian system in the form
\begin{displaymath}
  \begin{cases}
    {y_k}' &= \alpha \int g(y_k-z)\d{\pi^1_\sharp\bm\gamma^N_s(z)} + \alpha f_0(y_k) + \sum_i\beta_i f_i(y_k),\\
    {p_k}' &= -\alpha\int(p_k-q) \, \nabla g(y_k-z)\d{\bm\gamma^N_s(z,q)} - p_k\big(\alpha \nabla f_0(y_k) + \sum_i\beta_i \nabla f_i(y_k)\big).
  \end{cases}
\end{displaymath}
Next, we introduce the map
 $\vec{H}: \ \mathcal{P}_1(\mathbb{R}^{2n})\times A\mapsto \C0(\mathbb{R}^{2n};\mathbb{R}^{2n})$ as
\begin{equation}
\label{eq:DH}
  \vec{H}[\gamma,a,b] (y,p)\doteq
  \begin{pmatrix}
    a\int g(y-z)\d{\pi^1_\sharp\gamma}(z,q) + a f_0(y)+\sum_kb_k f_k(y)\\
    -a\int(p-q) \, \nabla g(y-z)\d{\gamma(z,q)}  - p\big(a \, \nabla f_0(y) + \sum_kb_k\, \nabla f_k(y)\big)
  \end{pmatrix}
\end{equation}
and note that 
the right-hand side of~\eqref{eq:dhamilton} can be expressed as $\vec{H}\left[\bm\gamma^N_s,\alpha(s),\beta(s)\right](y_k,p_k)$.
As we will show in the following section, $\vec{H}$ meets the assumptions of Theorem~\ref{thm:wellposed}'. Therefore, by Proposition~\ref{prop:discrete}, the curve $\bm\gamma^N\colon[0,S]\to\mathcal{P}_1(\mathbb{R}^{2n})$ satisfies the nonlocal continuity equation
\begin{equation}
\label{eq:hsystem}
\partial_s\bm\gamma_s + \nabla_{(y,p)}\cdot \left(\vec{H}\left[\bm\gamma_s,\alpha(s),\beta(s)\right]\bm\gamma_s\right)=0.
\end{equation}
Moreover, the flow $\Phi_{0,S}$ of the vector field 
$v_s = \vec{H}\left[\bm\gamma^N_s,\alpha(s),\beta(s)\right]$
pushes the measure $\frac{1}{N}\sum_k\delta_{\left(y_k(0),p_k(0)\right)}$, whose
first marginal is $\frac{1}{N}\sum_k\delta_{x_k}$, to the measure $\frac{1}{N}\sum_k\delta_{\left(y_k(S),p_k(S)\right)}$,
whose first and second marginals 
are connected by the identity
\begin{displaymath}
  \frac{1}{N}\sum_k\delta_{p_k(S)} = 
   \frac{1}{N}\sum_k\delta_{-\nabla\ell\left(y_k(S)\right)} = 
  \left(
    -\nabla\ell
  \right)_\sharp
  \Big(
  \frac{1}{N}\sum_k\delta_{y_k(S)}
  \Big).
\end{displaymath}
In other words, in addition to~\eqref{eq:hsystem}, $\bm\gamma^N$ satisfies the boundary conditions
\begin{equation}
\label{eq:hboundary}
\pi^1_\sharp\bm\gamma_0 = \vartheta,\quad \pi^2_\sharp\bm\gamma_S = (-\nabla\ell)_\sharp\left(\pi^1_\sharp\bm\gamma_S\right).
\end{equation}
Finally, introduce the map $\mathcal H\colon \mathcal{P}_1(\mathbb{R}^{2n})\times \mathbb{R}\times A\mapsto \mathbb{R}$ as
\begin{align}
 \mathcal H(\gamma,\lambda,a,b) &= \frac{a}{2}\iint(p-q)g(y-z)\d{\gamma(z,q)}\d{\gamma(y,p)} + a\lambda \notag\\
                        &+ a\int pf_0(y)\d{\gamma(y,p)}+\sum_kb_k\int pf_k(y)\d{\gamma(y,p)},
    \label{eq:H}
\end{align}
and note that 
\[
\hat H_N\left(y(s),p(s)/N,\lambda,\alpha(s),\beta(s)\right) = \mathcal H\left(\bm\gamma^N_s,\lambda,\alpha(s),\beta(s)\right).
\]

The above computations allow us to formulate the $\epsilon$-Maximum Principle for $(\hat P_N)$ in the following form:
\begin{theorem}
\label{thm:eps-pont}
Given $\epsilon>0$, let $(\bar \alpha,\bar\beta)\in\hat{\mathcal{U}}$ be $\epsilon$-optimal for $(\hat P_N)$. Then there exists $(\alpha^\epsilon,\beta^\epsilon)\in\hat{\mathcal{U}}$ such that
\begin{enumerate}[$(1)$]
    \item $(\alpha^\epsilon,\beta^\epsilon)$ is also $\epsilon$-optimal;
    \item $\|\bar\alpha -\alpha^\epsilon\|_\infty + 
    \sum_{j=1}^m\|\bar\beta_j -\beta_j^\epsilon\|_\infty\leq \sqrt{\epsilon}$;
    \item there exists $\lambda^\epsilon\in[\lambda_-,\lambda_+]$ and an absolutely continuous arc $\bm\gamma^\epsilon:
    \ [0,S]\mapsto \mathcal{P}_c(\mathbb{R}^{2n})$ of the form~\eqref{eq:regsol} satisfying
    the Hamiltonian system~\eqref{eq:hsystem}, where $\alpha=\alpha^\epsilon$ and $\beta=\beta^\epsilon$, together with boundary conditions~\eqref{eq:hboundary}
    and the \textbf{approximate maximum condition}
    \[
    \int_0^S\left[
    \max_{(a,b)\in A} \mathcal H(\bm\gamma_s^\epsilon,\lambda^\epsilon,a,b) - \mathcal H\left(\bm\gamma_s^\epsilon,\lambda^\epsilon,\alpha^\epsilon(s),\beta^\epsilon(s)\right)
    \right]\d s\leq 2\sqrt{\epsilon}.
    \]
Here $\lambda_-$, $\lambda_+$ are finite constants depending only on $C$, $L$, $S$, and $T$.
\end{enumerate}
\end{theorem}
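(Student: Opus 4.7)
The strategy is to view $(\hat P_N)$ as a classical finite-dimensional optimal control problem of the form $(OCP)$, apply Theorem~\ref{thm:ekeland} to produce an approximately extremal $(\alpha^{\epsilon},\beta^{\epsilon})$, and then translate the resulting Hamiltonian system and maximum condition into the empirical-measure form~\eqref{eq:hsystem}--\eqref{eq:hboundary} via Proposition~\ref{prop:discrete}.

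More concretely, I would set $y=(y_{1},\ldots,y_{N})\in\mathbb R^{nN}$, $x_{0}=(x_{1},\ldots,x_{N})$, $q(y)=\frac{1}{N}\sum_{k=1}^{N}\ell(y_{k})$, and identify the dynamics~\eqref{y1} with $y'=\alpha\,g_{0}(y)+\sum_{i=1}^{m}\beta_{i}g_{i}(y)$ for $g_{0}(y)_{k}=f_{0}(y_{k})+\frac{1}{N}\sum_{j}g(y_{k}-y_{j})$ and $g_{i}(y)_{k}=f_{i}(y_{k})$. Assumptions $(\mathbf A_{1})$–$(\mathbf A_{3})$ yield the uniform bound~\eqref{eq:ek-assumptions} required by Theorem~\ref{thm:ekeland}, and a direct computation shows that the associated Hamiltonian coincides with $\hat H_{N}(y,p,\lambda,a,b)$ as written in the previous subsection. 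Choosing $\kappa=N$ in Theorem~\ref{thm:ekeland} rescales the costate so that its terminal condition becomes exactly $p_{k}(S)=-\nabla\ell(y_{k}(S))$ (i.e.~\eqref{eq:dterm}), and the Hamiltonian appearing in the approximate maximum condition becomes $\hat H_{N}(y,p/N,\lambda,a,b)$. Claims (1) and (2) of the theorem are then immediate, and the Hamiltonian system~\eqref{eq:dhamilton} with boundary conditions~\eqref{eq:dinit},~\eqref{eq:dterm} is exactly what Theorem~\ref{thm:ekeland}(3) delivers.

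To pass from the ODE system to~\eqref{eq:hsystem}, I would verify that the vector field $\vec H$ of~\eqref{eq:DH} satisfies the hypotheses of Theorem~\ref{thm:wellposedc} on the doubled space $\mathbb R^{2n}$: its $y$-component is sublinear by the uniform bound on $f_{i},g$ in $(\mathbf A_{1})$; its $p$-component is affine in $p$ with coefficients bounded in terms of $L$ and $\mathfrak m_{1}(\gamma)$, hence also sublinear in $(y,p)$ and $\mathfrak m_{1}(\gamma)$; the local Lipschitz estimates in $(y,p)$ and in $\gamma$ (with respect to $\|\cdot\|_{K}$) follow from the local Lipschitzness of $\nabla f_{i}$ and $\nabla g$ provided by $(\mathbf A_{2})$; measurability and weak-$*$ continuity in $(a,b)$ are clear from the linear dependence. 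Proposition~\ref{prop:discrete} then applies to the $\mathbb R^{2n}$-valued ODE system $\dot{(y_{k},p_{k})}=\vec H[\bm\gamma^{N}_{s},\alpha(s),\beta(s)](y_{k},p_{k})$, showing that $\bm\gamma^{N}$ defined by~\eqref{eq:regsol} solves~\eqref{eq:hsystem}. The boundary conditions~\eqref{eq:hboundary} then hold by direct inspection: the first marginal at $s=0$ equals $\vartheta$, and the second marginal at $s=S$ is the pushforward of the first marginal through $-\nabla\ell$ because of~\eqref{eq:dterm}. Substituting the identity $\hat H_{N}(y^{\epsilon}(s),p^{\epsilon}(s)/N,\lambda^{\epsilon},a,b)=\mathcal H(\bm\gamma^{\epsilon}_{s},\lambda^{\epsilon},a,b)$ (the very definition of $\mathcal H$) into the integral inequality from Theorem~\ref{thm:ekeland} yields the claimed approximate maximum condition.

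The only genuinely delicate point is the control of the constants $\lambda_{-},\lambda_{+}$ independently of $N$. In Theorem~\ref{thm:ekeland} they depend on the bound $K$ of~\eqref{eq:ek-assumptions} applied to $q$ and $f$; a literal application would give a $K$ growing with $N$. The choice $\kappa=N$ is exactly what compensates: it rescales the adjoint so that the effective data in the approximate maximum condition become the averaged Hamiltonian $\mathcal H$ and the rescaled costate $p/N$, whose bounds are expressed purely through the constants $C,L$ from $(\mathbf A_{1})$–$(\mathbf A_{3})$ together with $S$ and $T$. I would verify this by tracking the derivation of $\lambda_{\pm}$ inside the Ekeland-based proof of Theorem~\ref{thm:ekeland} and checking that the rescaling $p\mapsto p/N$ removes the $N$-dependence; this is the main (but routine) obstacle, made possible by the fact that the Hamiltonian is linear in $p$.
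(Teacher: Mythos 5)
Your proposal is correct and follows essentially the same route as the paper: the paper's argument for this theorem consists precisely of casting $(\hat P_N)$ as an instance of $(OCP)$, invoking Theorem~\ref{thm:ekeland} with $\kappa=N$ to get the terminal condition~\eqref{eq:dterm}, and then rewriting the Hamiltonian ODE system~\eqref{eq:dhamilton} as the nonlocal continuity equation~\eqref{eq:hsystem} for the empirical measure~\eqref{eq:regsol} via Proposition~\ref{prop:discrete} and the regularity of $\vec H$ (Lemma~\ref{lem:regularity}). Your remark on the $N$-independence of $\lambda_\pm$ correctly identifies the one point the paper leaves implicit, and your resolution (the rescaling $p\mapsto p/N$ makes the relevant quantities $h_i$ in the Ekeland argument depend only on $C$, $L$, $S$, $T$) is the intended one.
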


\subsection{Hamiltonian system}\label{sec:Ham}

Prior to exhibiting the main result, we shall comment on the Hamiltonian
system~\eqref{eq:hsystem},~\eqref{eq:hboundary}.
In general, this system admits infinitely many solutions, as the following example
shows.
\begin{example}
  \label{ex:HS}
  Let \( \vec H \equiv 0 \) and \( -\nabla \ell = \id \).
  Then~\eqref{eq:hsystem},~\eqref{eq:hboundary} takes the
  form
  \begin{equation}
    \label{eq:HExample}
    \partial_{s}\bm\gamma_{s}=0,\quad
    \pi^{1}_{\sharp}\bm\gamma_{0} = \vartheta,\quad \pi^{2}_{\sharp}\bm\gamma_{S} = \pi^{1}_{\sharp}\bm\gamma_{S},
  \end{equation}
  Denote by \( \Gamma(\vartheta,\vartheta) \) the set of all transport
  plans between \( \vartheta \) and \( \vartheta \) (i.e., probability measures
  on \( \mathbb{R}^{2n} \) with \( \pi^{1}_{\sharp}\gamma = \pi^{2}_{\sharp}\gamma = \vartheta \)).
  This set contains infinite number of elements unless \( \vartheta \) is concentrated in a single point. 
  It is obvious that any curve \( \bm\gamma_{s}\equiv\gamma \), \( \gamma\in \Gamma(\vartheta,\vartheta) \), satisfies~\eqref{eq:HExample}.
\end{example}

Our aim is to construct a particular solution $\bm\gamma=\bm\gamma[\alpha,\beta,\vartheta]$
of~\eqref{eq:hsystem},~\eqref{eq:hboundary}, which depends continuously on all
its parameters $(\alpha,\beta,\vartheta)$. We start by checking the regularity of \(
\vec H \).

\begin{lemma}
  \label{lem:regularity}
  The map \( \vec H \) defined by~\eqref{eq:DH} satisfies all the assumptions of Theorem~\ref{thm:wellposedc}.
\end{lemma}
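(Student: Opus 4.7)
The plan is to verify the three conditions $(a)$, $(b')$, $(c)$ of Theorem~\ref{thm:wellposedc} in turn, directly estimating the two components of $\vec H$ and invoking $(\mathbf A_1)$ for global bounds/Lipschitzness of $g, f_i$, and $(\mathbf A_2)$ for the local Lipschitzness of $\nabla g, \nabla f_i$. Throughout I would use the Kantorovich duality $\bigl|\int\phi\,\d(\gamma-\gamma')\bigr|\le \mathrm{Lip}(\phi)\,\|\gamma-\gamma'\|_K$ and the contraction property $\|\pi^1_\sharp(\gamma-\gamma')\|_K\le\|\gamma-\gamma'\|_K$ for the projection $\pi^1\colon\mathbb R^{2n}\to\mathbb R^n$.

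For the sublinearity $(a)$, I would bound the two components of $\vec H[\gamma,a,b](y,p)$ separately. The first component is uniformly bounded by a constant because $(a,b)\in A$ and $g,f_i$ are globally bounded by $(\mathbf A_1)$. The second component contains the two linear-in-$p$ terms $-a\int(p-q)\,\nabla g(y-z)\d\gamma(z,q)$ and $-p\bigl(a\,\nabla f_0(y)+\sum_k b_k\nabla f_k(y)\bigr)$; since $g,f_k$ are $L$-Lipschitz, $|\nabla g|,|\nabla f_k|\le L$ (everywhere, thanks to $(\mathbf A_2)$), hence this term is bounded by $L|p|+L\int|q|\d\gamma(z,q)=L(|p|+\mathfrak m_1(\pi^2_\sharp\gamma))\le L(|(y,p)|+\mathfrak m_1(\gamma))$, which gives the desired sublinear estimate.

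For the local Lipschitzness $(b')$, fix a compact $K\subset\mathbb R^{2n}$ and take $(y,p),(y',p')\in K$ and $\gamma,\gamma'\in\mathcal P(K)$. Lipschitz dependence in $(y,p)$ follows since (i) $g$ is globally Lipschitz in $y$, (ii) $\nabla g$ is locally Lipschitz on compacts by $(\mathbf A_2)$ and the term $(p-q)\nabla g(y-z)$ depends linearly on $p$ with coefficient bounded on $K$, and (iii) $f_i$ and $\nabla f_i$ are locally Lipschitz by $(\mathbf A_1)$--$(\mathbf A_2)$, giving a constant $L_K$. For the Lipschitz dependence in $\gamma$, for the first component I apply Kantorovich duality to $\phi(z,q)\doteq g(y-z)$, which is $L$-Lipschitz in $z$ uniformly in $y$, combined with the projection inequality above. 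For the second component, the relevant integrand is $\phi(z,q)\doteq(p-q)\nabla g(y-z)$; its Lipschitz constant on $K$ in $(z,q)$ is controlled by $|p|\cdot\mathrm{Lip}(\nabla g|_{K})+\sup_{K}|\nabla g|$, which is finite precisely because both $p$ and $q$ range over a bounded set (here the restriction $\mu\in\mathcal P(K)$ is essential), and we obtain a constant $L_K$ as required.

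For $(c)$, the affine structure of $\vec H$ in $(a,b)$ is decisive: writing $\vec H[\gamma,a,b]=a\,\vec H_0[\gamma]+\sum_k b_k\,\vec H_k$ with $\vec H_0,\vec H_k$ continuous in $\gamma$ (from the Kantorovich estimates above) and independent of $(a,b)$, the measurability of $s\mapsto \vec H[\bm\gamma_s,\alpha(s),\beta(s)](y,p)$ follows because $s\mapsto\bm\gamma_s$ is continuous in $\mathcal P_1$ and $(\alpha,\beta)$ is measurable. The weak-$*$ continuity statement is just the observation that if $u^k\wto u$ in $\L{\infty}([0,S];\mathbb R^{m+1})$ then, for fixed continuous $\bm\gamma$ and fixed $(y,p)$, the scalar coefficients of $\alpha$ and each $\beta_k$ are bounded continuous functions of $s$, so testing against $\L1$ functions gives convergence. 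The main obstacle is handling the Lipschitz dependence in $\gamma$ of the $q$-integral in the second component, which is why the hypothesis $\mu\in\mathcal P(K)$ in $(b')$ (as opposed to $\mathcal P_1$ in $(b)$) is exactly what is needed: it forces the test function $(z,q)\mapsto(p-q)\nabla g(y-z)$ to be genuinely Lipschitz, with a constant depending only on $K$.
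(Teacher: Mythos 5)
Your proposal is correct and follows essentially the same route as the paper: verify $(a)$ by the global bounds in $(\mathbf A_1)$, verify $(b')$ by combining global Lipschitzness of $g,f_i$ with local Lipschitzness of $\nabla g,\nabla f_i$ and a Kantorovich-duality estimate for the dependence on $\gamma$ (the paper packages this duality step as Lemma~\ref{lem:aux}), and verify $(c)$ by exploiting the affine dependence on $(a,b)$ with bounded measurable coefficients. The only cosmetic difference is that the paper treats in detail only the term $a\int(p-q)\nabla g(y-z)\d\gamma(z,q)$ and declares the rest similar, whereas you sketch all components.
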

\begin{proof}
  It suffices to check that \( (a) \), \( (b') \), \( (c) \) hold for each term from
  the right-hand side of~\eqref{eq:DH}. We shall deal with the most difficult part
  \begin{displaymath}
    Q[\gamma,a](y,p)\doteq a\int(p-q)\nabla g(y-z)\d{\gamma(z,q)},
  \end{displaymath}
  because the others can be treated similarly.

  From \( (\mathbf{A}_{1}) \) and \( |a|\leq 1 \) it
  follows that
  \begin{align*}
    \left|Q[\gamma,a](y,p)\right|&\leq    L|a| \int |p-q|\d\gamma(z,q) \\&\leq L|p| + L\int|q|\d\gamma(z,q)\\
    &\leq L\left(|p|+\mathfrak m_{1}(\gamma)\right),
  \end{align*}
  and therefore \( (a) \) does hold.

  Let us show that \( (y,p) \mapsto p\nabla g(y) \) is Lipschitz on any compact \( K\subset
  \mathbb{R}^{2n} \). Indeed, consider a ball \( B_{r}(0)\subset \mathbb{R}^{n} \) such that \(
  K\subset B_{r}(0)\times B_{r}(0) \). Thanks to \( (\textbf{A}_{1}) \), \( (\textbf{A}_{2}) \), we have
  \begin{align*}
    \left|p\nabla g(y)-p'\nabla g(y')\right|&\leq |p-p'|\,\left|\nabla g(y)\right| + |p'|\,\left|\nabla
    g(y)-\nabla g(y')\right|\\&\leq L|p-p'| + rL_{K}|y-y'|,
  \end{align*}
  for all \( (y,p),(y',p')\in K \). Now, \( (b') \) follows from
  Lemma~\ref{lem:aux}.

  Given \( \bm \gamma\in \C0\left([0,T]; \mathcal P_{1}(\mathbb R^{2n})\right) \) and $\alpha \in \tilde{\mathcal U}$, we have \( Q[\bm\gamma_{t},\alpha(t)](y,p) =
  \alpha(t)\psi(t,y,p) \), where
  \begin{displaymath}
    \psi(t,y,p) \doteq \int(p-q)\nabla g(y-z)\d{\bm\gamma_{t}(z,q)}.
  \end{displaymath}
  The map \( t\mapsto\psi(t,y,p) \) is continuous for all \( (y,p) \), because the integrand is continuous
  and sublinear. Furthermore, this map is bounded for each \( (y,p) \), since
  \[
    \left|\psi(t,y,p)\right| \leq L\left(|p|+\mathfrak m_{1}(\bm\gamma_{t})\right)\leq
    L\left(|p|+\max_{t\in [0,T]}\mathfrak m_{1}(\bm\gamma_{t})\right),\quad t\in [0,T].
  \]
  Hence, \( \psi(\cdot,y,p)\in \L1\left([0,T];\mathbb{R}^{n}\right)\) for all \( y,p\in \mathbb{R}^{n} \), and now \( (c) \) is obvious.
\end{proof}

For the sake of brevity, we denote by $F$ and $G$ the components of $\vec{H}$, i.e.
\begin{displaymath}
  \vec{H}[\gamma,a,b] =
  \begin{pmatrix}   
  F[\pi^1_\sharp\gamma,a,b]\\
  G[\gamma,a,b]
  \end{pmatrix}.
\end{displaymath}

\begin{lemma}
\label{lem:lift}
If $\bm\gamma\in\C0([0,S];\mathcal{P}_c(\mathbb{R}^{2n}))$ satisfies~\eqref{eq:hsystem} then
$\bm\nu\in \C0([0,S];\mathcal{P}_c(\mathbb{R}^n))$, defined by $\bm\nu_s = \pi^1_\sharp\bm\gamma_s$, satisfies 
\begin{equation}
\label{eq:temp}
\partial_s\bm\nu_s + \nabla_{x}\cdot\left(F\left[\nu_s,\alpha(s),\beta(s)\right]\bm\nu_s\right) = 0.
\end{equation}
\end{lemma}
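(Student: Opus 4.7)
The plan is to project the distributional form of the continuity equation~\eqref{eq:hsystem} onto the $y$-coordinate by testing against functions that are independent of $p$. Writing $\vec H = (F,G)$ in accordance with the display just above the lemma, the key observation is that the first component $F$ appearing in~\eqref{eq:DH} depends on $\gamma$ only through its first marginal $\pi^{1}_\sharp\gamma$, whereas the full measure $\gamma$ enters only through the $G$-component. Thus, if only the $F$-component survives in the weak formulation, we immediately obtain a closed continuity equation for $\bm\nu_s$ with vector field $F[\bm\nu_s,\alpha(s),\beta(s)]$, which is precisely~\eqref{eq:temp}.

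Concretely, I would fix an arbitrary $\phi\in\Cc{\infty}((0,S)\times\mathbb R^n)$ and aim to test~\eqref{eq:hsystem} against its lift $(s,y,p)\mapsto\phi(s,y)$. The only mild technical issue is that this lift is not compactly supported in $p$, so the weak formulation does not apply directly. To fix this, I would use the hypothesis $\bm\gamma\in \C0([0,S];\mathcal P_c(\mathbb R^{2n}))$, which combined with the continuity of $s\mapsto\bm\gamma_s$ in $W_1$ and the sublinearity/regularity of $\vec H$ established in Lemma~\ref{lem:regularity}, ensures that $\bigcup_{s\in[0,S]}\spt\bm\gamma_s$ lies in some ball $\mathbb R^n\times B_R$. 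I would then pick a smooth cutoff $\chi_R$ with $\chi_R\equiv 1$ on $B_R$ and $\spt\chi_R\subset B_{R+1}$, set $\tilde\phi(s,y,p)\doteq\phi(s,y)\chi_R(p)\in \Cc{\infty}((0,S)\times\mathbb R^{2n})$, and note that on $\spt\bm\gamma_s$ one has $\chi_R(p)=1$ and $\nabla_p\chi_R(p)=0$, so $\partial_s\tilde\phi=\partial_s\phi$ and $\nabla_{(y,p)}\tilde\phi=(\nabla_y\phi,0)$.

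Plugging $\tilde\phi$ into the weak form of~\eqref{eq:hsystem}, the inner product $\vec H\cdot\nabla_{(y,p)}\tilde\phi$ collapses to $F[\pi^{1}_\sharp\bm\gamma_s,\alpha(s),\beta(s)](y)\cdot\nabla_y\phi(s,y)$, since the $G$-component is paired with the zero $p$-gradient. The remaining identity reads
\[
\int_0^S\!\!\int_{\mathbb R^{2n}}\Big(\partial_s\phi(s,y) + F[\pi^{1}_\sharp\bm\gamma_s,\alpha(s),\beta(s)](y)\cdot\nabla_y\phi(s,y)\Big)\d{\bm\gamma_s(y,p)}\d s = 0.
\]
Applying the change-of-variables formula for the push-forward through $\pi^{1}$ and recalling $\bm\nu_s=\pi^{1}_\sharp\bm\gamma_s$, this reduces to
\[
\int_0^S\!\!\int_{\mathbb R^{n}}\Big(\partial_s\phi(s,y) + F[\bm\nu_s,\alpha(s),\beta(s)](y)\cdot\nabla_y\phi(s,y)\Big)\d{\bm\nu_s(y)}\d s = 0,
\]
which is the distributional form of~\eqref{eq:temp}. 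The continuity of $s\mapsto\bm\nu_s$ in $\mathcal P_c(\mathbb R^n)$ follows from the continuity of $s\mapsto\bm\gamma_s$ and the $1$-Lipschitz character of the push-forward through $\pi^{1}$ in the Wasserstein distance.

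The argument is essentially a projection of a continuity equation onto a marginal; I do not anticipate a genuine obstacle. The only point deserving care is the cutoff step used to make $\tilde\phi$ an admissible test function, which is easily handled thanks to the compactness of the supports of $\bm\gamma_s$ guaranteed by the hypothesis $\bm\gamma\in \C0([0,S];\mathcal P_c(\mathbb R^{2n}))$.
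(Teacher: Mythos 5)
Your proposal is correct and follows essentially the same route as the paper: test the weak form of~\eqref{eq:hsystem} against $\phi(s,y)\psi_R(p)$ with a cutoff in $p$ that is identically $1$ on a ball containing the supports of all $\bm\gamma_s$ (the paper obtains this uniform support bound from Lemma~\ref{lem:compspt}, which is exactly the consequence of the regularity of $\vec H$ you invoke), so that the $G$-term is paired with $\nabla_p\psi_R=0$ and drops out, leaving the projected continuity equation for $\bm\nu_s=\pi^1_\sharp\bm\gamma_s$.
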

\begin{proof}
Fix some $\phi\in \Cc\infty((0,T)\times \mathbb R^n)$ and
consider test functions of the form $(s,x,p)\mapsto \phi(s,x)\psi_R(p)$, where 
$\psi_R\in \Cc\infty(\mathbb R^n)$ is such that 
$\psi_R = 1$ inside $B_{R+1}(0) \subset \mathbb R^{n}$, the ball of radius $R+1$ centered at $0$. 
It follows from~\eqref{eq:hsystem} that
\begin{multline}
  \int_0^S \int_{\mathbb{R}^{2n}}\Big( \partial_s\phi(s,x)\psi_R(p) + F\left[\pi^1_\sharp\gamma_s,\alpha(s),\beta(s)\right]\nabla_x\phi(s,x)\psi_R(p) \\+ G\left[\bm\gamma_s,\alpha(s),\beta(s)\right]\phi(s,x)\nabla_p\psi_R(p)
  \Big)
  \d{\bm\gamma_s(x,p)}\d s = 0.
  \label{eq:ham_zero}
\end{multline}
According to Lemma~\ref{lem:compspt}, the radius $R$ can be chosen so that $\spt\bm\gamma_s\subset B_R(0)\times B_R(0)$ for all $s\in [0,S]$. Since $\psi_R\equiv1$ inside $B_{R+1}(0)$, the left-hand side of~\eqref{eq:ham_zero}
coincides with
\begin{displaymath}
  \int_0^S \int_{\mathbb{R}^{2n}}\left( \partial_s\phi(s,x) + F\left[\pi^1_\sharp\bm\gamma_s,\alpha(s),\beta(s)\right]\nabla_x\phi(s,x)
  \right)
  \d{\bm\gamma_s(x,p)}\d s.
\end{displaymath}
This observation completes the proof.
\end{proof}
\begin{remark}
In the language of geometric control theory, one can say that $s\mapsto\bm\gamma_s$ is a lift of $s\mapsto\bm\nu_s$ from $\mathcal{P}_1(\mathbb{R}^n)$ to $\mathcal{P}_1(\mathbb{R}^{2n})$.\end{remark}

\begin{proposition}
  \label{prop:boundary}
  There exists a continuous map $\bm\Gamma\colon\hat{\mathcal{U}}\times\mathcal{P}_c(\mathbb
  R^n)\to\C0\big([0,S];\mathcal{P}_c(\mathbb{R}^{2n})\big)$ whose image consists
  of curves \( s\mapsto\bm \Gamma_{s}[\alpha,\beta,\vartheta] \) 
  satisfying~\eqref{eq:hsystem},~\eqref{eq:hboundary}.
Here   $\hat{\mathcal U}$ is equipped with the weak-$*$ topology.
\end{proposition}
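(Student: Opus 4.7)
The strategy is to build $\bm\Gamma[\alpha,\beta,\vartheta]$ via its characteristics, exploiting the triangular structure of~\eqref{eq:hsystem} exposed in Lemma~\ref{lem:lift}: the first marginal evolves autonomously, and the vertical variable enters linearly in its own equation once the first marginal is known. I would first build the first marginal $\bm\nu$. Since the top component $F$ of $\vec H$ satisfies the hypotheses of Theorem~\ref{thm:wellposedc} (as verified in Remark~\ref{remark-VF}; it is the very map driving $(\hat S)$), that theorem yields a unique solution $\bm\nu=\bm\nu[\alpha,\beta,\vartheta]\in \C0([0,S];\mathcal P_{c}(\mathbb R^{n}))$ of
\[
  \partial_{s}\bm\nu_{s}+\nabla_{x}\cdot\bigl(F[\bm\nu_{s},\alpha(s),\beta(s)]\,\bm\nu_{s}\bigr)=0,\quad \bm\nu_{0}=\vartheta,
\]
depending continuously on $(\alpha,\beta,\vartheta)$. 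Denoting by $\Phi_{0,s}\colon \mathbb R^{n}\to \mathbb R^{n}$ the flow of $\dot y=F[\bm\nu_{s},\alpha(s),\beta(s)](y)$, Proposition~\ref{prop:discrete} together with the discrete-to-continuous passage of Remark~\ref{rem:discrete} give $\bm\nu_{s}=(\Phi_{0,s})_{\sharp}\vartheta$, with $\Phi$ depending continuously on the parameters on each compact by Gronwall.

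Next, I would construct the adjoint variable. Looking for $\bm\Gamma_{s}$ of the form $(\Phi_{0,s},P_{s})_{\sharp}\vartheta$ with $P_{s}\colon \spt\vartheta \to \mathbb R^{n}$ to be found, the second component of the characteristic system for $\vec H[\bm\Gamma_{s},\alpha(s),\beta(s)]$ reduces, along the flow, to the terminal-value problem
\begin{align*}
  \dot P_{s}(x) &= -P_{s}(x)\,M_{s}(\Phi_{0,s}(x)) + \alpha(s)\!\int\! P_{s}(x')\,\nabla g\bigl(\Phi_{0,s}(x)-\Phi_{0,s}(x')\bigr)\,\d\vartheta(x'),\\
  P_{S}(x) &= -\nabla \ell\bigl(\Phi_{0,S}(x)\bigr),
\end{align*}
where $M_{s}(y)\doteq \alpha(s)(\nabla g * \bm\nu_{s})(y)+\alpha(s)\nabla f_{0}(y)+\sum_{i}\beta_{i}(s)\nabla f_{i}(y)$. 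Regarded as an ODE in the Banach space $\C0(\spt\vartheta;\mathbb R^{n})$, this has a globally Lipschitz right-hand side by $(\mathbf A_{1})$, $(\mathbf A_{2})$ and the compactness of $\spt\bm\nu_{s}$; a standard Banach-space Picard argument run backward from $s=S$ delivers a unique $P\in \C0([0,S];\C0(\spt\vartheta;\mathbb R^{n}))$, continuous in $(\alpha,\beta,\vartheta)$.

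Setting $\bm\Gamma_{s}[\alpha,\beta,\vartheta]\doteq (\Phi_{0,s},P_{s})_{\sharp}\vartheta$, the boundary conditions~\eqref{eq:hboundary} are then immediate: $\pi^{1}_{\sharp}\bm\Gamma_{0}=\vartheta$, and $\pi^{2}_{\sharp}\bm\Gamma_{S}=(-\nabla\ell\circ\Phi_{0,S})_{\sharp}\vartheta=(-\nabla\ell)_{\sharp}\bm\nu_{S}=(-\nabla\ell)_{\sharp}(\pi^{1}_{\sharp}\bm\Gamma_{S})$. For the continuity equation~\eqref{eq:hsystem}, observe that $s\mapsto (\Phi_{0,s}(x),P_{s}(x))$ is, by construction, a characteristic of the vector field $\vec H[\bm\Gamma_{s},\alpha(s),\beta(s)]$; hence an empirical approximation $\vartheta^{N}\to\vartheta$, combined with Proposition~\ref{prop:discrete} applied to $\vec H$ and the stability statement of Remark~\ref{rem:discrete}, transfers~\eqref{eq:hsystem} to the limit. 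Continuity of the overall map $\bm\Gamma\colon \hat{\mathcal U}\times\mathcal P_{c}\mapsto \C0([0,S];\mathcal P_{c}(\mathbb R^{2n}))$ follows by composing the continuity statements for $\bm\nu$, $\Phi$, and $P$, together with Wasserstein-continuity of push-forwards under jointly continuous transport maps.

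The delicate step is the analysis of the adjoint equation for $P$: unlike the forward equation for $\bm\nu$, it is a \emph{terminal}-value problem with a nonlocal integrand, and it lives in a space of $\mathbb R^{n}$-valued functions rather than of measures. The key simplification making it tractable is that, once $\bm\nu$ and $\Phi$ have been frozen in Step~1, this equation is \emph{linear} in $P$, bypassing any Wasserstein-contraction subtleties and reducing continuous dependence on $(\alpha,\beta,\vartheta)$ to routine perturbation estimates in a standard Banach space.
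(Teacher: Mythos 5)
Your proof is correct, but it takes a genuinely different route from the paper's. The paper never touches characteristics: it solves the marginal equation~\eqref{eq:temp} forward from $\vartheta$ to get $\bm\nu$, then solves the \emph{full measure-valued} system~\eqref{eq:hsystem} backward from the terminal datum $(\id,-\nabla\ell)_\sharp\bm\nu_S$ using Theorem~\ref{thm:wellposedc} (applied to $\vec H$, whose regularity is Lemma~\ref{lem:regularity}) as a black box for existence, uniqueness and continuous dependence; the only remaining issue, namely that the first marginal of the backward solution at $s=0$ equals $\vartheta$, is settled by Lemma~\ref{lem:lift} plus uniqueness for~\eqref{eq:temp}. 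You instead build the solution explicitly as $(\Phi_{0,s},P_s)_\sharp\vartheta$, solving a linear backward ODE for the costate $P$ in $\C0(\spt\vartheta;\mathbb{R}^n)$. Your construction makes both boundary conditions in~\eqref{eq:hboundary} immediate (no lift-plus-uniqueness step needed) and exposes the triangular, $p$-linear structure very cleanly; the price is that you must re-prove, by hand, facts the paper gets for free from Theorem~\ref{thm:wellposedc}: that $(\Phi_{0,s},P_s)_\sharp\vartheta$ really solves~\eqref{eq:hsystem} (your detour through empirical measures works, though the direct flow representation $\bm\mu_t=\Phi_{0,t\sharp}\vartheta$ from Appendix~A is shorter), and, more delicately, joint continuity of $(\alpha,\beta,\vartheta)\mapsto(\Phi,P)$ when $\hat{\mathcal U}$ carries the weak-$*$ topology and when $\spt\vartheta$ varies — the latter requires working on a fixed common compact and uniform (not just pointwise) convergence of the transport maps before push-forward continuity in $W_1$ can be invoked. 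These are routine but not free; as written, that last step is the only place where your argument is thinner than the paper's.
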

\begin{proof}
Let $\bm\nu =\bm\nu[\alpha,\beta,\vartheta]$ be a solution of~\eqref{eq:temp} with initial condition $\bm\nu_0 = \vartheta$, and $\tilde{\bm\gamma}=\tilde{\bm\gamma}[\alpha,\beta,\gamma]$ be a solution of~\eqref{eq:hsystem} with terminal condition $\tilde{\bm\gamma}_S = \gamma$. By Theorem~\ref{thm:wellposedc}, both curves depend continuously on all the parameters. Hence their composition
\[
\bm \Gamma[\alpha,\beta,\vartheta] \doteq \tilde{\bm\gamma}\left[\alpha,\beta,(\id,-\nabla\ell)_\sharp\bm\nu_S[\alpha,\beta,\vartheta]\right]
\]
is continuous as well. By construction, $\bm\gamma=\bm\Gamma[\alpha,\beta,\vartheta]$ satisfies~\eqref{eq:hsystem} together with the terminal condition
\[
\pi^2_\sharp\bm\gamma_S = (-\nabla\ell)_\sharp(\pi^1_\sharp\bm\gamma_S).
\]
Now, by Lemma~\ref{lem:lift}, we conclude that $\tilde{\bm\nu}_s \doteq \pi^1_{\sharp}\bm\gamma_s$ meets~\eqref{eq:temp}. Since $\tilde{\bm\nu}_S = \bm\nu_S$,
Theorem~\ref{thm:wellposed} implies  that $\tilde{\bm\nu}_s = \bm\nu_s$ for all $s$. In particular, $\tilde{\bm\nu}_0 = \pi^1_\sharp\bm\gamma_0 = \vartheta$.
\end{proof}

\begin{definition}
  Let \( \bm\gamma \) be the map constructed in Proposition~\ref{prop:boundary}. Any
  curve \( s\mapsto\bm \gamma_{s}[\alpha,\beta,\vartheta] \) from its image is called a regular
  solution of~\eqref{eq:hsystem},~\eqref{eq:hboundary}.
\end{definition}

By construction, each triple \( (\alpha,\beta,\vartheta)\in \hat{\mathcal U} \times \mathcal P_{c} \)
produces a unique regular solution.
Let us stress that the curve \( \bm\gamma^{N} \) defined by~\eqref{eq:regsol} is the regular solution
of~\eqref{eq:hsystem},~\eqref{eq:hboundary} corresponding to \( (\alpha,\beta)\in
\hat{\mathcal U} \) and \( \vartheta =
\frac{1}{N}\sum_{j=1}^{N}\delta_{x_{j}} \). Indeed, it suffices to show that 
$\bm\gamma^N_S = (\id,-\nabla\ell)_\sharp \bm\nu^N_S$, where $\bm\nu^N_S \doteq \pi^1_\sharp\bm\gamma^N_S = \frac{1}{N}\sum_j\delta_{y_j(S)}$. It is true, since
\[
(\id,-\nabla\ell)_\sharp \left(\frac{1}{N}\sum_j\delta_{y_j(S)}\right) = 
\frac{1}{N}\sum_j\delta_{\left(y_j(S),-\nabla \ell(y_j(S)\right)} = \frac{1}{N}\sum_j\delta_{\left(y_j(S),p_j(S)\right)} \doteq \bm\gamma^N_S.
\]



\subsection{Necessary optimality condition}\label{sec:PMP}

Now we are ready to exhibit a version of Pontryagin's Maximum Principle for problem $(\hat P)$. 

\begin{theorem}
\label{thm:pmp}
Assume that \((\mathbf{A_{1}})\)--\((\mathbf{A_{3}})\) hold, \( \vartheta\in \mathcal P_{c}(\mathbb{R}^{n}) \), and let $(\bar\alpha,\bar\beta)\in \hat{\mathcal{U}}$ be optimal for $(\hat P)$. Then there exist $\lambda\in\mathbb{R}$ and a regular solution $\bm\gamma\colon [0,S]\mapsto \mathcal{P}_c(\mathbb{R}^{2n})$ of
\[
\begin{cases}
  \partial_s\bm\gamma_s + \nabla_{(y,p)}\cdot\left(\vec{H}\left[\bm\gamma_s,\bar\alpha(s),\bar\beta(s)\right]\bm\gamma_s\right)=0,\\
  \pi^1_\sharp\bm\gamma_0=\vartheta,\quad \pi^2_\sharp\bm\gamma_S = (-\nabla\ell)_\sharp\left(\pi^1_\sharp\bm\gamma_S\right),
\end{cases}
\]
such that the following maximum condition holds for $\mathcal L^1$-a.e. $s\in [0,S]$:
\begin{eqnarray}
&\hspace{-1cm}\displaystyle \mathcal H\left(\bm\gamma_s,\lambda,\bar\alpha(s),\bar\beta(s)\right)=\max_{(a,b)\in A}\mathcal H(\bm\gamma_s,\lambda,a,b)\doteq \max\big\{\mathcal H^1(\bm\gamma_s, \lambda), \mathcal H^0(\bm\gamma_s)\big\}.\label{maximum-cond-limit}
\end{eqnarray}
Here, 
$$
\mathcal{H}^1(\gamma, \lambda)\doteq \frac{1}{2}\iint(p-q)g(y-z)\d\gamma(z,q)\d\gamma(y,p)+\int pf_0(y)\d\gamma(y,p)+\lambda,
$$
and
$$
\mathcal{H}^0(\gamma)\doteq \max_{1\leq i\leq m}\Big|\int pf_i(y)\d\gamma(y,p)\Big|.
$$
\end{theorem}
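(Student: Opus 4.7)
The plan is to lift the $\epsilon$-Maximum Principle of Theorem~\ref{thm:eps-pont} to the continuous-initial-datum case by a combined particle-discretization/limit argument. Concretely, approximate the compactly supported measure $\vartheta$ by empirical measures $\vartheta^N=\frac{1}{N}\sum_{k=1}^N\delta_{x_k^N}$ with $W_1(\vartheta^N,\vartheta)\to 0$; run the $\epsilon$-PMP for $(\hat P_N)$ along a suitable $\epsilon_N\to 0$; then pass to the limit using the continuity properties of both the input-output map $(\alpha,\beta,\vartheta)\mapsto\bm\nu[\alpha,\beta,\vartheta]$ (Theorem~\ref{thm:wellposedc}) and the regular-solution map $\bm\Gamma$ (Proposition~\ref{prop:boundary}).

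The first sub-step is to verify that the fixed optimal $(\bar\alpha,\bar\beta)$ is $\epsilon_N$-optimal for $(\hat P_N)$. Since $\vartheta^N,\vartheta$ lie in a common compact $\mathcal P(K)$ for large $N$, Theorem~\ref{thm:wellposedc} gives joint continuity of $(\alpha,\beta,\vartheta)\mapsto\bm\nu_S$ on the compact $\hat{\mathcal U}\times\mathcal P(K)$, hence uniform convergence (in $(\alpha,\beta)$) of the cost $\int\ell\,\d\bm\nu_S[\alpha,\beta,\vartheta^N]\to\int\ell\,\d\bm\nu_S[\alpha,\beta,\vartheta]$. This yields both $\inf(\hat P_N)\to\inf(\hat P)$ and uniform closeness of $\text{cost}_N(\bar\alpha,\bar\beta)$ to $\inf(\hat P)$, whence $\epsilon_N$-optimality with some $\epsilon_N\to 0$. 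Applying Theorem~\ref{thm:eps-pont} yields $(\alpha^N,\beta^N)\in\hat{\mathcal U}$, $\lambda^N\in[\lambda_-,\lambda_+]$, and empirical curves $\bm\gamma^N$. By the comment after Proposition~\ref{prop:boundary}, these $\bm\gamma^N$ coincide with the regular solutions $\bm\Gamma[\alpha^N,\beta^N,\vartheta^N]$.

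I then pass to the limit. Property (2) of Theorem~\ref{thm:eps-pont} gives $(\alpha^N,\beta^N)\to(\bar\alpha,\bar\beta)$ in $\L\infty$, hence a fortiori in $\Lw\infty$; combined with $W_1(\vartheta^N,\vartheta)\to 0$ and the continuity of $\bm\Gamma$ in all arguments (Proposition~\ref{prop:boundary}), this gives $\bm\gamma^N\to\bm\gamma\doteq\bm\Gamma[\bar\alpha,\bar\beta,\vartheta]$ in $\C0([0,S];\mathcal P_c(\mathbb R^{2n}))$. Compactness of $[\lambda_-,\lambda_+]$ lets me extract $\lambda^N\to\lambda$ along a subsequence. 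By construction, $\bm\gamma$ is a regular solution of the Hamiltonian system with data $(\bar\alpha,\bar\beta,\vartheta)$, so the first conclusion of the theorem holds. Note also that, by Lemma~\ref{lem:compspt} applied uniformly (the initial data $\vartheta^N$ stay in a common compact, and the dynamics preserves the one-sided Lipschitz-type growth from $(\mathbf A_1)$, $(\mathbf A_2)$), the supports $\spt\bm\gamma^N_s$ lie in a single compact $K'\subset\mathbb R^{2n}$ independent of $N$ and $s$.

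The main obstacle is upgrading the approximate maximum condition to the pointwise identity~\eqref{maximum-cond-limit}. Define the nonnegative defect
\[
\Delta^N(s)\doteq\max_{(a,b)\in A}\mathcal H(\bm\gamma^N_s,\lambda^N,a,b)-\mathcal H(\bm\gamma^N_s,\lambda^N,\alpha^N(s),\beta^N(s)),
\]
and its candidate limit $\Delta(s)$ built from $(\bm\gamma,\lambda,\bar\alpha,\bar\beta)$. Uniform compactness of supports together with continuity of $\mathcal H$ (which, restricted to measures supported in $K'$, depends continuously on its $\gamma$-argument in $W_1$, jointly with its scalar arguments) plus $\L\infty$-convergence of $(\alpha^N,\beta^N)$ yield $\Delta^N(s)\to\Delta(s)$ for a.e.\ $s$, with a uniform bound. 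Dominated convergence then gives
\[
\int_0^S\Delta(s)\,\d s=\lim_N\int_0^S\Delta^N(s)\,\d s\leq\lim_N 2\sqrt{\epsilon_N}=0,
\]
and since $\Delta\geq 0$ this forces $\Delta\equiv 0$ a.e., which is exactly~\eqref{maximum-cond-limit}; the split into $\mathcal H^1$ and $\mathcal H^0$ is the explicit evaluation of $\max_{(a,b)\in A}\mathcal H$ already displayed in \S\ref{sec:approx-PMP}. The delicate point throughout is ensuring that the uniform bound on supports survives the limit $N\to\infty$, so that all $W_1$-continuity arguments for the nonlocal terms in $\mathcal H$ and $\vec H$ are actually valid.
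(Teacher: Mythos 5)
Your proposal is correct and follows essentially the same route as the paper's proof: discretize $\vartheta$ by empirical measures so that $(\bar\alpha,\bar\beta)$ becomes $\epsilon$-optimal for $(\hat P_N)$, invoke Theorem~\ref{thm:eps-pont}, use the continuity of the regular-solution map $\bm\Gamma$ from Proposition~\ref{prop:boundary} to pass to the limit, and kill the nonnegative defect in the approximate maximum condition by integrating and letting $\epsilon\to 0$. The only cosmetic difference is that you justify the final limit via uniform compactness of supports and dominated convergence, whereas the paper appeals to sublinearity of the integrands and $\mathcal P_1$-convergence; both are valid here.
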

\begin{proof}
\textbf{1.} 
The continuity of 
$(\alpha,\beta,\vartheta) \mapsto \int \ell(x) \d{\bm\nu_S[\alpha,\beta,\vartheta](x)}$ 
(which follows from Theorem~\ref{thm:wellposed}) implies the continuity of
\[
\vartheta\mapsto\min_{(\alpha,\beta)\in\hat{\mathcal U}} \int \ell(x) \d{\bm\nu_S[\alpha,\beta,\vartheta](x)}.
\]
Let \( K = \spt\vartheta \). Since discrete measures with rational coefficients are dense in $\mathcal{P}(K)$~\cite[Theorem 6.18]{Villani},
we deduce that, for each $\epsilon>0$, there exists $N_\epsilon\in \mathbb N$ and a discrete
measure $\vartheta^\epsilon=\frac{1}{N_\epsilon}\sum_{k=1}^{N_\epsilon}\delta_{x_k}$ such that \(
\vartheta^{\varepsilon} \) is supported in \( K \), 
 $\|\vartheta^\epsilon - \vartheta\|_K \leq \epsilon$ and
$(\bar\alpha,\bar\beta)$ is $\epsilon$-optimal for problem $(\hat P_{N_\epsilon})$ associated with $\vartheta^\epsilon$.

\textbf{2.} By Theorem~\ref{thm:eps-pont}, there exist $(\alpha^\epsilon,\beta^\epsilon)\in\hat{\mathcal U}$, $\lambda^\epsilon\in[\lambda_-,\lambda_+]$, and $\bm\gamma^\epsilon\in \C0([0,T];\mathcal{P}_1(\mathbb R^{2n}))$ with the following preperties:
\begin{enumerate}[$(a)$]
    \item $(\alpha^\epsilon,\beta^\epsilon)$ is $\epsilon$-optimal for $(\hat P_{N_\epsilon})$;
    \item $\|\bar\alpha -\alpha^\epsilon\|_\infty + 
    \sum_{j=1}^m\|\bar\beta_j -\beta_j^\epsilon\|_\infty\leq \sqrt{\epsilon}$;
    \item \( \bm\gamma^{\varepsilon} \) is a regular solution of
    $
    \begin{cases}
      \partial_s\bm\gamma^\epsilon_s + \nabla_{(y,p)} \cdot\left(\vec{H}\left(\bm\gamma^\epsilon_s,\alpha^\epsilon(s),\beta^\epsilon(s)\right)
      \bm\gamma^\epsilon_s\right)=0,\\
      \pi^1_\sharp\bm\gamma^\epsilon_0 = \vartheta,\quad \pi^2_\sharp\bm\gamma^\epsilon_S = (-\nabla\ell)_\sharp\left(\pi^1_\sharp\bm\gamma^\epsilon_S\right);
    \end{cases}
    $
    \item
    $
    \displaystyle\int_0^S\left[
    \max_{(a,b)\in A} \mathcal H(\bm\gamma_s^\epsilon,\lambda^\epsilon,a,b) - \mathcal H\left(\bm\gamma_s^\epsilon,\lambda^\epsilon,\alpha^\epsilon(s),\beta^\epsilon(s)\right)
    \right]\d s\leq 2\sqrt{\epsilon}
    $.
\end{enumerate}

\textbf{3.} We are going to pass to the limit as $\epsilon\to 0$. Without loss of generality,
it can be assumed that $\lambda^\epsilon$ converges to some $\lambda\in [\lambda_-,\lambda_+]$.
We easily obtain that $\vartheta^\epsilon\to\vartheta$ in $\mathcal{P}(K)$, $(\alpha^\epsilon,\beta^\epsilon)
\to(\bar\alpha,\bar\beta)$ in $\L\infty$, $\bm\gamma^\epsilon \doteq \bm\Gamma[\alpha^\epsilon,\beta^\epsilon,\vartheta^\epsilon]\to \bm\Gamma[\bar\alpha,\bar\beta,\vartheta]\doteq \bm\gamma$ in $\C0$
(the latter follows from Proposition~\ref{prop:boundary}). It remains to show that passage to the limit in (d) gives
\begin{equation}
\label{eq:last}
    \int_0^S\left[
    \max_{(a,b)\in A} \mathcal H(\bm\gamma_s,\lambda,a,b) - \mathcal H\left(\bm\gamma_s,\lambda,\bar\alpha(s),\bar\beta(s)\right)
    \right]\d s= 0,
\end{equation}
because then (\ref{maximum-cond-limit}) would follow due to the nonnegativity of the integrand.

\textbf{4.} Let us prove~\eqref{eq:last}. Since $\mathcal H$ depends on $a$ and $b$ linearly (see~\eqref{eq:H}), it suffices to show that
\begin{gather*}
\iint(p-q)g(y-z)\d{\bm\gamma_s^\epsilon(z,q)}\d{\bm\gamma_s^\epsilon(y,p)}
\to \iint(p-q)g(y-z)\d{\bm\gamma_s(z,q)}\d{\bm\gamma_s(y,p)},\\
\int pf_0(y)\d{\bm\gamma^\epsilon_s(y,p)}\to \int pf_0(y)\d{\bm\gamma_s(y,p)},
\quad
\int pf_k(y)\d{\bm\gamma_s^\epsilon(y,p)}\to \int pf_k(y)\d{\bm\gamma_s(y,p)}
\end{gather*}
pointwise. But this fact follows from the convergence $\bm\gamma^\epsilon_s\overset{\mathcal P_1}{\to} \bm\gamma_s$ because all the integrands are sublinear.
\end{proof}


\subsection{Impulsive Maximum Principle}\label{sec:PMP-Imp}

Above, we have derived two different representations for a relaxation of problem $(P)$. Theorem~\ref{thm:pmp} gives a necessary optimality condition for the first of them (in terms of an ``ordinary'' system $(\hat S)$). This section provides a necessary optimality condition for the second representation (in terms of impulsive system \eqref{MCE}).

Let $\bar{\bm \mu}$ be a minimizer for problem $(\overline{P})$, $\{\overline u^k\} \subset \mathcal U$ a control sequence producing $\bar{\bm \mu}$ as a generalized solution of \eqref{eq:ce}--\eqref{eq:adm}, and $(\overline{\mathfrak U}, \overline{\mathfrak V})$ a (partial) ``$\rightharpoondown$''-limit of the sequence $\{(F_{\overline u^k}, F_{|\overline u^k|})\}$. Here we shall assume that  $\overline{\mathfrak V}$ has a trivial singular continuous part, and the set of its atoms is naturally ordered.

Thanks to Theorem~\ref{th-relax}, we can associate to $\bar{\bm \mu}$ a process $(\overline{\bm \nu}, \overline \alpha, \overline \beta)$ of system $(\hat S)$, which should be optimal for $(\hat P)$ due to Theorems~\ref{th-relax-inverse} and \ref{thm:solexist}. This process satisfies:
\begin{align}
\overline{\bm\mu}_t = \bm\nu_{\overline\xi^\leftarrow(t)},
\quad
\overline{\mathfrak{U}}(t) = \int_0^{\overline{\xi}^\leftarrow(t)}\overline{\beta}(s)\d s,
\quad t\in [0,T],\label{limit-relations-PMP}
\end{align}
where $\overline\xi$ is defined by \eqref{xi} with $\alpha=\overline{\alpha}$, and $\overline{\xi}^\leftarrow$ is the pseudo-inverse of $\overline\xi$.

Similar to the proof of Theorem~\ref{th-relax}, one determines the attached controls $\overline u^\tau_i$, $i=\overline{1,m}$, $\tau \in \Delta_{\overline{\mathfrak V}}$, enjoying \eqref{u-tau-constraint}, by
$$\overline u^\tau_i\doteq (\overline \beta_i \, |\overline\beta|^\oplus) \circ \overline\Upsilon^\tau,$$ where
$\overline\Upsilon^\tau(\varsigma)\doteq \inf\{s: \, \overline\upsilon(s)-\overline{\mathfrak V}(\tau^-)>\varsigma\}$, $\overline\upsilon(s)\doteq \displaystyle\int_{0}^{s} \left|\overline\beta(\varsigma)\right| \, \d\varsigma$. Then the collection $(\overline{\mathfrak{U}}, \overline{\mathfrak{V}}, \{\overline u^\tau\})$ completely defines the measure-valued function $\bar{\bm \mu}$ in its discrete-continuous integral representation, proposed by Theorem~\ref{th-relax}, and thus can be called an \emph{optimal impulsive control}. In what follows, it will be appropriate for us to characterize the optimality of  $(\overline{\mathfrak{U}}, \overline{\mathfrak{V}}, \{\overline u^\tau\})$ instead of $\bar{\bm \mu}$.

For this, return to the characterization of the optimal process $(\overline{\bm \nu}, \overline \alpha, \overline \beta)$, given by Theorem~\ref{thm:pmp}, and focus on the Hamiltonian system \eqref{eq:hsystem}, \eqref{eq:hboundary}. Setting
\begin{equation}
\bm\varrho_t \doteq {\bm \gamma}_{\overline{\xi}^\leftarrow(t)}, \quad t \in [0,T],\label{varrho}
\end{equation}
and
$$
\bm \omega_\varsigma^\tau\doteq \bm\gamma_{\overline\Upsilon^\tau(\varsigma)}, \quad \varsigma \in [0,T_\tau], \quad \tau \in \Delta_{\overline{\mathfrak V}},
$$
we define a function  $\bm\varrho\in \mathbf{BV}_+\big([0,T];\mathcal{P}_c(\mathbb{R}^{2n})\big)$ and absolutely continuous curves $\bm \omega^\tau: \, [0,T_\tau] \mapsto \mathcal P_c(\mathbb R^{2n})$ with the property
\begin{equation}
\hspace{-0.3cm}\bm\omega_0^\tau=\bm\varrho_{\tau^-}, \quad \bm \omega_{T_\tau}^\tau=\bm\varrho_{\tau}, \ \ \ \tau \in \Delta_{\overline{\mathfrak V}}.\label{Ham-limit-endpoint}
\end{equation}

By the same simple calculations as in the proof of Theorem~\ref{th-relax} (which we drop here for brevity), one discovers that $\bm\varrho$ satisfies the integral relation
\begin{align}
&0=\nonumber\\
&\int_0^{T}\!\!\int_{}\,\,\Bigg[\partial_t \, \varphi(t,x,p) +
\begin{pmatrix}
    \int g(x-y)\d\pi^1_\sharp{\bm\varrho}_t(y) +  f_0(x)+\sum \dot{\overline{\mathfrak U}}^{ac}_k(t) f_k(x)\\
    -\int(p-q) \, \nabla g(x-y)\d{{\bm\varrho}_t(y,q)}  - p\big(\nabla f_0(x) + \sum \dot{\overline{\mathfrak U}}^{ac}_k(t)\, \nabla f_k(x)\big)
  \end{pmatrix} \cdot \nonumber\\
 &\cdot\nabla_{(x,p)} \, \varphi(t,x,p)\Bigg]
 \d{\bm\varrho}_t(x,p) \d t +\nonumber\\
&\sum_{\tau \in \Delta_{\mathfrak V}} \int_{0}^{T_\tau}\!\!\!\int_{} \,\, \bigg[\partial_\varsigma \, \varphi^\tau(\varsigma,x,p) +
\begin{pmatrix}
    \sum \overline u^\tau_k(\varsigma) f_k(x)\\
    -\sum \overline  u^\tau_k(\varsigma)\, \nabla f_k(x)
  \end{pmatrix}
  \cdot \nabla_{(x,p)} \, \varphi^\tau(\varsigma,x,p)\bigg]
\d{\bm\omega}^\tau_\varsigma(x,p) \d\varsigma,\label{Ham-IntRepr}
\end{align}
which should hold for all collections $(\varphi, \{\varphi^\tau\}_{\tau \in \Delta_{\overline{\mathfrak V}}})$ of functions $\varphi: \, (0,T)\times \mathbb R^{2n} \mapsto \mathbb R$, $\varphi^\tau: \, [0,T_\tau]\times \mathbb R^{2n}\mapsto \mathbb R$, $\tau \in \Delta_{\overline{\mathfrak V}}$, such that
\begin{itemize}
    \item $\varphi$ is right continuous in $t$ for all $x \in \mathbb R^{2n}$, and is $\C\infty_c$ on each $(\tau_j, \tau_{j+1}) \times \mathbb R^{2n}$;
    \item all $\varphi^\tau$, $\tau \in \Delta_{\overline{\mathfrak V}}$, are $\C\infty_c$ on the respective sets $(0, T_\tau) \times \mathbb R^{2n}$, and
    \item $\varphi(\tau^-, x)=\varphi^\tau(0, x)$ and $\varphi(\tau, x)=\varphi^\tau(T_\tau, x)$, for all $\tau \in \Delta_{\overline{\mathfrak V}}$ and $x \in \mathbb R^{2n}$.
\end{itemize}
Furthermore, from \eqref{varrho} and \eqref{eq:hboundary}, it is clear that
\begin{equation}
\label{eq:hboundary-imp}
\pi^1_\sharp\bm\varrho_0 = \vartheta,\quad \pi^2_\sharp\bm\varrho_T = (-\nabla\ell)_\sharp\left(\pi^1_\sharp\bm\varrho_T\right).
\end{equation}

\medskip


Conditions \eqref{Ham-IntRepr}, \eqref{eq:hboundary-imp} give an impulsive, discrete-continuous version of the Hamiltonian system \eqref{eq:hsystem}, \eqref{eq:hboundary}. This representation serves as an ingredient of the following
\begin{theorem}{(Impulsive Maximum Principle)}
\label{thm:pmp-imp}
Let \((\mathbf{A_{1}})\)--\((\mathbf{A_{3}})\) hold and $(\overline{\mathfrak U},\overline{\mathfrak V}, \{\overline u^\tau\})$ be an optimal impulsive control in $(\overline{P})$ such that $\overline{\mathfrak V}^{sc}=0$ and $\Delta_{\overline{\mathfrak V}}$ is naturally ordered. Then there exist $\lambda\in\mathbb{R}$ and a solution $\bm\varrho\in \mathbf{BV}_+\big([0,T];\mathcal{P}_c(\mathbb{R}^{2n})\big)$ of \eqref{Ham-IntRepr}, \eqref{eq:hboundary-imp} with some graph completion $\{\bm\omega^\tau\}$ satisfying \eqref{Ham-limit-endpoint}, such that the following conditions hold true:
    \begin{align}
         &\mathcal H^1(\bm\varrho_t, \lambda)\geq \mathcal H^0(\bm\varrho_t)\quad \mbox{ for $\mathcal{L}^n$-a.a. $t \in [0,T]\setminus \spt {\rm d}\overline{\mathfrak V}$};\label{MaxCond1}\\
         &\mathcal H^1(\bm\varrho_t, \lambda)=\mathcal H^0(\bm\varrho_t)\quad \mbox{ for $\d{\overline{\mathfrak V}}^{ac}$-a.a. $t \in \spt {\rm d}\overline{\mathfrak V}^{ac}$};\label{MaxCond2}\\
        & \mathcal H^1(\bm\omega_\varsigma, \lambda)\leq \mathcal H^0(\bm\omega_\varsigma)\quad\mbox{ for $\mathcal{L}^n$-a.a. $\varsigma \in [0,T_\tau]$ and all $\tau \in \Delta_{\overline{\mathfrak V}}$}.\label{MaxCond3}
    \end{align}
Above, functions $\mathcal H^{0,1}$ are defined as in Theorem~\eqref{thm:pmp}, while ${\rm d}\overline{\mathfrak V}$ and ${\rm d}\overline{\mathfrak V}^{ac}$ are the signed measures produced by the functions $\overline{\mathfrak V}$ and $\overline{\mathfrak V}^{ac}$, respectively.
\end{theorem}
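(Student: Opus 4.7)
The plan is to derive Theorem~\ref{thm:pmp-imp} from Theorem~\ref{thm:pmp} by pulling back the extremal structure of the reduced problem $(\hat P)$ through the discontinuous time reparametrization $t=\overline{\xi}(s)$. This mirrors, on the costate side, the derivation of the impulsive equation~\eqref{MCE} from the reduced equation~\eqref{red-conteq} carried out in Proposition~\ref{MDEinSM}. First, by Theorems~\ref{th-relax-inverse} and~\ref{thm:solexist}, the reduced process $(\overline{\bm\nu},\overline{\alpha},\overline{\beta})$ associated to $(\overline{\mathfrak U},\overline{\mathfrak V},\{\overline u^\tau\})$ via~\eqref{limit-relations-PMP} is optimal for $(\hat P)$. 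Theorem~\ref{thm:pmp} then supplies $\lambda\in\mathbb R$ and a regular solution $s\mapsto\bm\gamma_s$ of~\eqref{eq:hsystem},~\eqref{eq:hboundary} satisfying the pointwise maximum condition $\mathcal H\bigl(\bm\gamma_s,\lambda,\overline{\alpha}(s),\overline{\beta}(s)\bigr)=\max\bigl\{\mathcal H^1(\bm\gamma_s,\lambda),\mathcal H^0(\bm\gamma_s)\bigr\}$ for $\mathcal L^1$-a.e.\ $s\in[0,S]$.

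Next, I would define $\bm\varrho_t\doteq\bm\gamma_{\overline\xi^\leftarrow(t)}$ and $\bm\omega^\tau_\varsigma\doteq\bm\gamma_{\overline\Upsilon^\tau(\varsigma)}$ as in~\eqref{varrho}. The $\mathbf{BV}_+$ regularity of $\bm\varrho$, the endpoint matching~\eqref{Ham-limit-endpoint}, and the boundary condition~\eqref{eq:hboundary-imp} are immediate from the construction of the pseudo-inverse and from~\eqref{eq:hboundary}. The integral relation~\eqref{Ham-IntRepr} is obtained by splitting the weak form of~\eqref{eq:hsystem} into a contribution over $\{\overline\alpha>0\}$ and contributions on the constancy intervals $[\overline{\xi}^\leftarrow(\tau^-),\overline{\xi}^\leftarrow(\tau)]$, $\tau\in\Delta_{\overline{\mathfrak V}}$, then applying the changes of variable $t=\overline\xi(s)$ and $\varsigma=\overline\upsilon(s)-\overline{\mathfrak V}(\tau^-)$ together with the identity $\dot{\overline{\mathfrak U}}^{ac}=(\overline{\beta}\,\overline{\alpha}^{\oplus})\circ\overline\xi^\leftarrow$ from~\eqref{dot-U}. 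This step is a verbatim repetition of the computations in the proof of Proposition~\ref{MDEinSM}, applied to the lifted curve $\bm\gamma$ on $\mathbb R^{2n}$ in place of $\bm\nu$ on $\mathbb R^n$; the test-function concatenation that enforces the matching conditions at jump points is likewise transplanted unchanged.

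The substantive step is the translation of the reduced maximum condition into~\eqref{MaxCond1}--\eqref{MaxCond3}. Since $\mathcal H$ is linear on $A$ as $(a,b)\mapsto a\mathcal H^1(\gamma,\lambda)+\sum_k b_k\!\int\! p f_k(y)\,\d{\gamma(y,p)}$ and $\mathcal H^0(\gamma)=\max_i|\!\int\! pf_i\,\d{\gamma}|$, a pointwise analysis yields three exclusive regimes for a.e.\ $s$: (i) if $\overline\alpha(s)>0$ and $\overline\beta(s)=0$, optimality at the corner $(1,0)$ forces $\mathcal H^1(\bm\gamma_s,\lambda)\geq\mathcal H^0(\bm\gamma_s)$; (ii) if $\overline\alpha(s)>0$ and $\overline\beta(s)\neq 0$, attainment of the max at a strict convex combination of the extreme points of $A$ forces $\mathcal H^1(\bm\gamma_s,\lambda)=\mathcal H^0(\bm\gamma_s)$; (iii) if $\overline\alpha(s)=0$ (whence $|\overline\beta(s)|=1$), optimality on the face $\{a=0\}$ forces $\mathcal H^0(\bm\gamma_s)\geq\mathcal H^1(\bm\gamma_s,\lambda)$. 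Using the identity $\dot{\overline{\mathfrak V}}^{ac}=|\overline\beta\,\overline\alpha^{\oplus}|\circ\overline\xi^\leftarrow$, regime (i) transports under $t=\overline\xi(s)$ to $[0,T]\setminus\spt\d{\overline{\mathfrak V}}$, regime (ii) to $\spt\d{\overline{\mathfrak V}^{ac}}$, and regime (iii) to $[0,T_\tau]$ for each $\tau\in\Delta_{\overline{\mathfrak V}}$ via $\varsigma=\overline\upsilon(s)-\overline{\mathfrak V}(\tau^-)$, yielding~\eqref{MaxCond1}--\eqref{MaxCond3} respectively.

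The main obstacle I anticipate is the careful bookkeeping of null sets under these pull-backs, so that the single ``$\mathcal L^1$-a.e.\ on $[0,S]$'' statement of Theorem~\ref{thm:pmp} transfers to the three distinct almost-everywhere statements of~\eqref{MaxCond1}--\eqref{MaxCond3} with respect to three different reference measures ($\mathcal L^1$ on $[0,T]$, $\d{\overline{\mathfrak V}^{ac}}$, and $\mathcal L^1$ on each $[0,T_\tau]$). The hypothesis $\overline{\mathfrak V}^{sc}=0$ is used precisely to exclude an intermediate singular-continuous regime that does not fit into the discrete-continuous impulsive description, guaranteeing that the three regimes above exhaust $[0,S]$ up to an $\mathcal L^1$-null set.
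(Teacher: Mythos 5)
Your proposal is correct and follows essentially the same route as the paper: associate the optimal reduced process via Theorems~\ref{th-relax-inverse} and~\ref{thm:solexist}, invoke Theorem~\ref{thm:pmp}, transplant the computation of Proposition~\ref{MDEinSM} to the lifted curve to get \eqref{Ham-IntRepr}, and read off \eqref{MaxCond1}--\eqref{MaxCond3} from the maximizer structure of the linear-in-$(a,b)$ Hamiltonian under the time changes $s=\overline{\xi}^{\leftarrow}(t)$ and $\varsigma=\overline{\upsilon}(s)-\overline{\mathfrak V}(\tau^-)$. The only inessential slip is the parenthetical ``whence $|\overline\beta(s)|=1$'' in regime (iii): a weak-$*$ limit control in $\hat{\mathcal U}$ need not satisfy $\overline\alpha+\sum_j|\overline\beta_j|=1$ a.e., but the conclusion $\mathcal H^1\leq\mathcal H^0$ on the face $\{a=0\}$ is unaffected.
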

With representation (\ref{Ham-IntRepr}), (\ref{eq:hboundary-imp}) at hand, the proof of this result is an almost literal repetition of \cite[proof of Theorem 6.2]{Miller2003}, based on the correspondence between problems $(\overline P)$ and $(\hat P)$ (Theorems~\ref{th-relax} and \ref{th-relax-inverse}), and  the explicit structure of maximizers in the optimization problem
$\max_{(a,b)\in A}\mathcal H(\bm\gamma_s,\lambda,a,b)$ from \eqref{maximum-cond-limit}.  To derive the maximum conditions \eqref{MaxCond1}--\eqref{MaxCond3}, one can consider an optimal process $(\overline{\bm \nu}, \overline \alpha, \overline \beta)$ of $(\hat P)$, associated to $(\overline{\mathfrak U},\overline{\mathfrak V}, \{\overline u^\tau\})$ in the sense, discussed above. As is simply checked, $\overline \alpha$ enjoys the following representation
\[
\overline{\alpha}(s) \in
\left\{
\begin{array}{ll}
\{1\}, & \mathcal H^1(\bm\gamma_s, \lambda)\geq \mathcal H^0(\bm\gamma_s),\cr
\{0\}, & \mathcal H^1(\bm\gamma_s, \lambda)\leq \mathcal H^0(\bm\gamma_s),\cr
[0,1], & \mathcal H^1(\bm\gamma_s, \lambda)= \mathcal H^0(\bm\gamma_s).
\end{array}
\right.
\]
Thanks to definitions \eqref{limit-relations-PMP} and \eqref{varrho}, conditions \eqref{MaxCond1} and \eqref{MaxCond2} are obtained by the time change $s=\overline{\xi}^\leftarrow(t)$, if we notice that $\overline \alpha=1$ $\mathcal{L}^n$-a.e. on $\big\{s \in [0,S]: \, \overline{\xi}(s) \in [0,T]\setminus \spt {\rm d}\overline{\mathfrak V}\big\}$ and $\overline \alpha\in (0,1)$ $\mathcal{L}^n$-a.e. over $\big\{s \in [0,S]: \, \overline{\xi}(s) \in \spt {\rm d}\overline{\mathfrak V}^{ac}\big\}$. Conditions \eqref{MaxCond3} follow from the fact that $\overline \alpha=0$ $\mathcal{L}^n$-a.e. on each interval $[\overline{\xi}^{\leftarrow}(\tau^-), \overline{\xi}^{\leftarrow}(\tau)]$, $\tau \in \Delta_{\overline{\mathfrak V}}$, by virtue of the respective reparametrization $\varsigma=\overline\upsilon(s)-\overline{\mathfrak V}(\tau^-)$.

\medskip

Theorem~\ref{thm:pmp-imp} grants us no new information about optimal solutions of  $(\overline P)$, compared to Theorem~\ref{thm:pmp}. At the same time, 
we see from it
that the impulsive behavior of an optimal process is characterized by the interplay of two ``partial Hamiltonians'' $\mathcal H^{0,1}$, calculated along the solution $\bm\varrho$ of the discrete-continuous Hamiltonian system \eqref{Ham-IntRepr}, \eqref{eq:hboundary-imp} with some admissible graph completion.
Given a reference process to be checked for optimality and the associated solution of the Hamiltonian system, we are thus to compare the values of $\mathcal H^{1}$ and $\mathcal H^{0}$:  $\mathcal H^{1}>\mathcal H^{0}$ says that no impulses should appear at the current time moment, $\mathcal H^{1}=\mathcal H^{0}$ leaves us an option, and  $\mathcal H^{1}<\mathcal H^{0}$ advises that the trajectory $\bm \mu$ should jump.


\section{Conclusion}\label{sec:conclusion}

Together with \cite{Star}, this article presents a set 
of very basic results for optimal impulsive control of distributed multi-agent systems.


A natural direction of the future work is the development of a numeric algorithm for optimal control of system $(\overline S)$ based on the proved necessary optimality condition. As a motivation, we shall point out that the direct approach (reduction to a mathematical programming problem through discretization in time and space) seems to be totally unworkable for such models, even in the case of local vector fields. At the same time, certain positive experience in applying necessary optimality conditions for numeric analysis of optimal control problems involving the linear transport equation~\cite{Pogodaev2017} gives us a portion of hope. Here, a crucial difficulty is due to ``fast'' (numerically efficient) computation of solutions to nonlocal transport equations.

As a final note we stress that, by now, our consideration had been landed on models of homotypic crowds. A very challenging generalization then is to study the case of non-uniform population. Towards this, one can deal with a system of nonlocal transport equations, involving the cross-interaction of different ``species'' \cite{Colombo2012}, or follow a pretty novel approach based on the concept of ``graphone'' \cite{Caines2018}.

\paragraph{Acknowledgements.}
Nikolay Pogodaev thanks the Russian Science Foundation (project No. 17-11-01093), Maxim Staritsyn thanks the 
Russian Foundation for Basic Research (project No.~18-31-20030) for partial financial support.

\appendix

\section{Proofs of Theorems~\ref{thm:wellposed},~\ref{thm:wellposedc}}\label{Append1}

The proof is based on the following version \cite[Theorem~A.2.1]{BressanPiccoliBook} of the Banach contraction mapping theorem 
\begin{theorem}
  \label{thm:contraction}
  Let $(\mathcal X,d)$ be a complete metric space, $\Lambda$ a metrizable topological
  space, and ${F}$ a function $\Lambda\times \mathcal X\to \mathcal X$. Assume that 
  \begin{enumerate}
      \item the map $\lambda \mapsto F(\lambda, x)$ is continuous for all $x \in \mathcal X$, and
      
      \item the map $x \mapsto F(\lambda, x)$ is uniformly contractive, i.e., there exists $0\leq\kappa<1$ such that
  \begin{displaymath}
   d\left({F}(\lambda,x),{F}(\lambda,y)\right)\leq \kappa\, d(x,y)\quad\forall\,\lambda \in \Lambda, \ x,y \in \mathcal X.
 \end{displaymath}
  \end{enumerate}
 Then, for each $\lambda\in\Lambda$, there exists a unique point $x(\lambda)\in \mathcal X$ with the property:
 \begin{displaymath}
   x(\lambda) = {F}\big(\lambda,x(\lambda)\big).
 \end{displaymath}
Furthermore, the map $\lambda\mapsto x(\lambda)$ is a continuous function $\Lambda \mapsto \mathcal X$.
\end{theorem}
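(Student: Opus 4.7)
The statement is a parametric version of the Banach contraction principle, so the plan splits into two independent pieces: the pointwise existence/uniqueness of the fixed point, and the continuous dependence on the parameter.

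For the first part, I would fix an arbitrary $\lambda\in\Lambda$ and apply the classical Banach fixed point theorem to the self-map $x\mapsto F(\lambda,x)$ on the complete metric space $(\mathcal X,d)$. Hypothesis (2) says this map is a strict contraction with ratio $\kappa<1$ that does \emph{not} depend on $\lambda$, so the classical theorem yields a unique fixed point, which we call $x(\lambda)$. This step is entirely routine and produces a well-defined map $\lambda\mapsto x(\lambda)$.

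For continuous dependence, since $\Lambda$ is metrizable, it suffices to prove sequential continuity. Given $\lambda_n\to\lambda$, I would estimate
\begin{align*}
d\bigl(x(\lambda_n),x(\lambda)\bigr)
&= d\bigl(F(\lambda_n,x(\lambda_n)),F(\lambda,x(\lambda))\bigr)\\
&\le d\bigl(F(\lambda_n,x(\lambda_n)),F(\lambda_n,x(\lambda))\bigr)
+ d\bigl(F(\lambda_n,x(\lambda)),F(\lambda,x(\lambda))\bigr)\\
&\le \kappa\, d\bigl(x(\lambda_n),x(\lambda)\bigr)
+ d\bigl(F(\lambda_n,x(\lambda)),F(\lambda,x(\lambda))\bigr),
\end{align*}
where the second inequality uses uniform contractivity. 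Rearranging gives
\[
d\bigl(x(\lambda_n),x(\lambda)\bigr)
\le \frac{1}{1-\kappa}\,
d\bigl(F(\lambda_n,x(\lambda)),F(\lambda,x(\lambda))\bigr),
\]
and the right-hand side tends to $0$ thanks to hypothesis (1) applied at the fixed argument $x=x(\lambda)$. Hence $x(\lambda_n)\to x(\lambda)$, proving continuity.

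There is no real obstacle here; the only thing to be slightly careful about is that hypothesis (1) gives continuity of $F(\cdot,x)$ for each \emph{fixed} $x$, not joint continuity, so the splitting above is essential --- one must evaluate $F(\lambda_n,\cdot)$ and $F(\lambda,\cdot)$ at the \emph{same} point $x(\lambda)$ to invoke (1), and absorb the moving argument $x(\lambda_n)$ through the uniform contraction constant $\kappa$. The uniformity of $\kappa$ in $\lambda$ is what makes the factor $1/(1-\kappa)$ independent of $n$ and closes the argument.
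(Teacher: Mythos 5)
Your proof is correct: the existence and uniqueness part is the classical Banach fixed point theorem applied for each fixed $\lambda$, and the continuity estimate
\[
d\bigl(x(\lambda_n),x(\lambda)\bigr)\le \frac{1}{1-\kappa}\,d\bigl(F(\lambda_n,x(\lambda)),F(\lambda,x(\lambda))\bigr)
\]
is exactly the standard argument, with the key points (evaluating both maps at the fixed argument $x(\lambda)$ so that hypothesis (1) applies, and absorbing the moving argument via the $\lambda$-uniform constant $\kappa$) correctly identified. Note that the paper itself gives no proof of this statement --- it is quoted from \cite[Theorem~A.2.1]{BressanPiccoliBook} --- so there is nothing to compare against; your argument is the standard one and is complete.
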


\begin{proofof}{Theorem~\ref{thm:wellposed}}
\textbf{1.} We are going to apply Theorem~\ref{thm:contraction} to the metric space \( (\mathcal X, d) = \big(\C{0}([0,T];\mathcal{P}_1), \|\cdot\|_\gamma\big) \) and the parameter space $\Lambda = \tilde{\mathcal{U}}\times \mathcal{P}_1$. The norm $\|\cdot\|_\gamma$, given by
\begin{equation*}
  \|\bm\mu\|_\gamma = \max_{t\in[0,T]}e^{-\gamma t}\|\bm\mu_t\|_K,
\end{equation*}
is equivalent to the usual supremum norm. The set $\tilde{\mathcal{U}}$ equipped with the topology \( \sigma(\L{\infty},\L{1}) \) is metrizable, because $\tilde{\mathcal{U}}$ belongs to a closed ball of $\L\infty$.

\textbf{2.} Consider two auxiliary operators
$\mathscr{V}$ and
\( \mathscr{T}\). The first one maps a control $u\in\tilde{\mathcal U}$ and a curve $\bm\mu\in\C0([0,T];\mathcal P_1)$ into a time dependent vector field $v$ by the rule 
$v_t = V[u,\bm\mu_t]$. The second one 
maps a probability measure \( \vartheta \) and a 
time dependent vector field \( v \) to the solution 
\( \bm\mu \) of the continuity equation
\begin{displaymath}
  \partial_t\bm\mu_t +\nabla \cdot (v_t\bm\mu_t) = 0,\quad \bm\mu_0 = \vartheta.
\end{displaymath}
Recall that $\bm\mu_t = \mathscr{T}(\vartheta,v)_t = \Phi_{0,t\sharp}\vartheta$, where
$\Phi$ is the flow of $v$.
We are able now to construct the final ingredient for Theorem~\ref{thm:contraction}~ --- an appropriate map $\Lambda\times \mathcal X\to \mathcal X$~--- by formula
\begin{equation*}
  \mathcal F\big(u,\vartheta,\bm\mu\big)\doteq \mathscr {T}\left(\vartheta,\mathscr {V}
  \big(u,\bm\mu\big)\right).
\label{eq-con}
\end{equation*} 

\textbf{3.} First, let us ensure the contraction property. Take a control $u\in\tilde{\mathcal{U}}$, a measure $\vartheta\in\mathcal{P}_1$, and
two continuous curves \(\bm\mu^1,\bm\mu^2\colon [0,T]\mapsto\mathcal P_1\) with $\bm\mu^1_0=\bm\mu^2_0=\vartheta$.  Consider the associated vector fields 
$v^1 = \mathscr{V}[u,\bm\mu^1]$, $v^2 = \mathscr{V}[u,\bm\mu^2]$ and 
the corresponding flows \( \Phi^1 \), \( \Phi^2 \). We shall estimate the norm
\begin{displaymath}
 \|\mathcal{F}(u,\vartheta,\bm\mu^1) -\mathcal{F}(u,\vartheta,\bm\mu^2) \|_\gamma =  \max_{t\in [0,T]}e^{-\gamma t}\|\Phi^1_{0,t\sharp}\vartheta - \Phi^2_{0,t\sharp}\vartheta\|_K.
\end{displaymath}
Note that 
\begin{align}
  \|\Phi^1_{0,t\sharp}\vartheta - \Phi^2_{0,t\sharp}\vartheta\|_K 
  &= \sup_{\phi\in\Lip_1(\mathbb R^n)}\Big\{\int \phi\d{(\Phi^1_{0,t\sharp}\vartheta - \Phi^2_{0,t\sharp}\vartheta)}
  \Big\}\notag
  \\
  &= \sup_{\phi\in\Lip_1(\mathbb R^n)}
  \Big\{\int\left(\phi\left(\Phi^1_{0,t}(\xi)\right) - \phi\left(\Phi^2_{0,t}(\xi)\right)\!\right)\!\d\vartheta(\xi)
  \Big\}\notag
  \\
  &\leq \int\left|\Phi^1_{0,t}(\xi) - \Phi^2_{0,t}(\xi)\right|\d\vartheta(\xi),
    \label{eq:Kestim}
\end{align}
and set
\begin{displaymath}
  x^i(t)\doteq \Phi^i_{0,t}(\xi) = \xi + \int_0^t v_s^i\left(x^i(s)\right)\d s,
  \quad i=1,2.
\end{displaymath}
Thanks to assumption $(b)$, we derive
\begin{eqnarray*}
\displaystyle |x^1(t)-x^2(t)|&\leq&\int_0^t \big|v_s^1(x^1(s))-v_s^2(x^2(s))\big|\d s\\&=&\int_0^t \big|V[\bm\mu^1_s,u(s)](x^1(s))-V[\bm\mu^2_s,u(s)](x^2(s))\big|\d s\\
\displaystyle  &\leq& L\int_0^t |x^1(s)-x^2(s)|\d s + L \int_0^t\|\bm\mu^1_s - \bm\mu^2_s\|_{K}\d s.
\end{eqnarray*}
The Gr\"{o}nwall inequality then gives
\begin{displaymath}
  \left|\Phi^1_{0,t}(\xi) - \Phi^2_{0,t}(\xi)\right| =  |x^1(t)-x^2(t)|\leq L e^{Lt} \int_0^t\|\bm\mu^1_s - \bm\mu^2_s\|_{K}\d s.
\end{displaymath}
Since the right-hand side of the latter inequality does not depend on $\xi$, we conclude that
\begin{equation*}
\label{eq:templip}
  \|\Phi^1_{0,t\sharp}\vartheta - \Phi^2_{0,t\sharp}\vartheta\|_K\leq \int|\Phi^1_{0,t}(\xi)-\Phi^2_{0,t}(\xi)|\d\vartheta(\xi)
  \leq Le^{Lt} \int_0^t\|\bm\mu^1_s - \bm\mu^2_s\|_{K}\d s.
\end{equation*}
Hence,
\begin{align*}
  \|\Phi^1_{0,t\sharp}\vartheta - \Phi^2_{0,t\sharp}\vartheta\|_{K}   
  &\leq L e^{Lt} \int_0^te^{\gamma s}e^{-\gamma s}\|\bm\mu^1_s - \bm\mu^2_s\|_{K}\d s\\
  &\leq L e^{Lt} \int_0^te^{\gamma s}\|\bm\mu^1 - \bm\mu^2\|_{\gamma}\d s\\
  &\leq \frac{L}{\gamma} e^{Lt}(e^{\gamma t}-1) \|\bm\mu^1 - \bm\mu^2\|_{\gamma}\\
  &\leq \frac{L}{\gamma} e^{(L+\gamma)t} \|\bm\mu^1 - \bm\mu^2\|_{\gamma}.
\end{align*}
Multiplying both sides by $e^{-\gamma t}$ and taking maximum over $t\in [0,T]$,
we obtain
\[
\|\Phi^1_{0,\cdot\sharp}\vartheta - \Phi^2_{0,\cdot\sharp}\vartheta\|_\gamma
\leq \frac{L}{\gamma} e^{LT}  \|\bm\mu^1 - \bm\mu^2\|_{\gamma}.
\]
Thus, the choice $\gamma > Le^{LT}$ ensures the desired estimate:
\begin{displaymath}
  \|\mathcal{F}(u,\vartheta,\bm\mu^1) -\mathcal{F}(u,\vartheta,\bm\mu^2) \|_\gamma\leq \kappa \|\bm\mu^1-\bm\mu^2\|_\gamma,\quad \kappa<1.
\end{displaymath}

\textbf{4.}
Before passing to the next property, let us look closely at the image of
$\mathcal{F}$.
Given $\bm\mu\in \C0([0,T];\mathcal{P}_1)$ and $u\in \tilde{\mathcal{U}}$, consider
the vector field $v = \mathscr{V}(u,\bm\mu)$ and its flow $\Phi$.
Assumptions $(a)$ and $(c)$ imply that $v$ is measurable in $t$, Lipschitz continuous in $x$, and sublinear, that is
\[
|v_t(x)|\leq C\left(1+|x|+\mathfrak m_{1}(\bm\mu_{t})\right),\quad |v_t(x) - v_t(y)|\leq L|x-y|\quad\text{for all } t,x,y.
\]
Since \( \bm\mu\colon [0,T]\to \mathcal P_{1} \) is continuous, so is the map \( t\mapsto
\mathfrak m_{1}(\bm\mu_{t}) \) (see~\eqref{eq:weakconv}).
In particular, the latter map achieves its maximum \( m_{\bm\mu} \) on \( [0,T] \), hence
\begin{displaymath}
 |v_t(x)|\leq C\left(1+m_{\bm\mu}+|x|\right),\quad\text{for all } t,x.
\end{displaymath}

Now, by the standard arguments from ODE theory, one can easily deduce that, for all $s,t\in [0,T]$ and $x,y\in \mathbb R^n$,
\begin{equation}
  \label{eq:phiprop}
  \begin{cases}
  |\Phi_{0,s}(x) - \Phi_{0,t}(x)| \leq C\left(1+m_{\bm\mu}+\left(|x|
      +C(1+m_{\bm\mu})T\right)e^{CT}\right)|s-t|,\\
  |\Phi_{0,t}(x) - \Phi_{0,t}(y)| \leq e^{Lt}|x-y|.
  \end{cases}
\end{equation}
The computations similar to~\eqref{eq:Kestim} together with the first inequality in~\eqref{eq:phiprop} give 
\[
\norma{\Phi_{0,s\sharp}\vartheta-\Phi_{0,t\sharp}\vartheta}_K\leq
\int |\Phi_{0,s}(x) - \Phi_{0,t}(x)|\d\vartheta(x) 
\leq M_{\vartheta}|s-t|,
\]
where
\[
M_{\vartheta} \doteq C\left[(1+m_{\bm\mu})(1+CTe^{CT}) + e^{CT}\int|x|\d{\vartheta}\right].
\]
In other words, each curve $t\mapsto \mathcal{F}(u,\vartheta,\bm\mu)_t$ is Lipschitz with constant 
$M_{\vartheta}$.


\textbf{5.} It remains to establish the continuity of the mapping
$(u,\vartheta)\mapsto\mathcal{F}(u,\vartheta,\bm\mu)$. Fix a curve $\bm\mu\in\C0([0,T];\mathcal{P}_1)$ and a sequence $(u^j,\vartheta^j)\in \tilde{\mathcal{U}}\times \mathcal{P}_1$, $j\in\mathbb{N}$,
converging to some  $(u,\vartheta)\in \tilde{\mathcal{U}}\times \mathcal{P}_1$. As before, we construct the  corresponding vector fields $v = \mathscr{V}(u,\bm\mu)$, $v^j = \mathscr{V}(u^j,\bm\mu)$, $j\in\mathbb{N}$, and their flows
$\Phi$, $\Phi^j$, $j\in\mathbb{N}$. Let us show that
\begin{equation}
\label{eq:pointwise}
    \lim_{j\to \infty} \|\Phi^j_{0,t\sharp}\vartheta^j-\Phi_{0,t\sharp}\vartheta\|_K=0 \quad
    \forall t\in [0,T].
\end{equation}
To that end, observe that, for any $t\in [0,T]$, 
\[
\|\Phi^j_{0,t\sharp}\vartheta^j - \Phi_{0,t\sharp}\vartheta\|_K 
\leq
\|\Phi^j_{0,t\sharp}\vartheta^j - \Phi^j_{0,t\sharp}\vartheta\|_K 
+
\|\Phi^j_{0,t\sharp}\vartheta - \Phi_{0,t\sharp}\vartheta\|_K.
\]
By~\eqref{eq:phiprop}, the first term from the right-hand side is majorated by the quantity $e^{Lt}\|\vartheta^j-\vartheta\|_K$, and therefore,
converges to $0$.
It follows from $(c)$ that $v^j_{(\cdot)}(x)\wto v_{(\cdot)}(x)$, for each $x\in\mathbb{R}^n$. Hence, by Lemma
2.8~\cite{Pogodaev2016}, 
the second term also vanishes. Thus, \eqref{eq:pointwise} does hold, and it says that $t\mapsto \mathcal{F}(u,\vartheta,\bm\mu)_t$ is a pointwise limit of $t\mapsto \mathcal{F}(u^j,\vartheta^j,\bm\mu)_t$. 
Recall that $\vartheta^j\to\vartheta$ implies $\int|x|\d\vartheta^j(x)\to
\int|x|\d\vartheta(x)$.
In particular, the corresponding sequence \( \{M_{\vartheta^{j}}\} \) is bounded.
Now, it follows from step 4 that there exists a common Lipschitz constant for all curves
from $\left\{\mathcal{F}(u^j,\vartheta^j,\bm\mu)\right\}$. As a result, the latter
sequence forms a relatively compact subset of $\C0([0,T];\mathcal P_1)$, and thus converges uniformly.
\end{proofof}

The following lemma provides a key ingredient of the proof of Theorem~\ref{thm:wellposedc}.

\begin{lemma}
  \label{lem:compspt}
  Let the assumption \( (a) \), \( (b') \), \( (c) \) hold.
  If for some \( u\in \tilde{\mathcal{U}} \) and \( \vartheta\in \mathcal{P}_{c} \) there is a curve \( \bm\mu\in
  \C0\left([0,T];\mathcal{P}_{1}\right) \) satisfying~\eqref{eq:genpde} and \(\bm\mu_{0}=\vartheta \) then
\begin{displaymath}
   \spt \vartheta \subset B_{r}(0)\quad\Rightarrow\quad \spt\bm\mu_{t}\subset  B_{R}(0)\quad \forall t\in [0,T],
\end{displaymath}
where \( R= re^{CT} \exp\left(Ce^{CT}T\right)\).
\end{lemma}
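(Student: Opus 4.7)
\bigskip

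\noindent\textbf{Plan.} The strategy is the classical one: represent $\bm\mu_{t}$ as the push-forward of $\vartheta$ through the flow of the (now, frozen) time-dependent vector field $v_{t}(x)\doteq V[\bm\mu_{t},u(t)](x)$, and estimate the flow by two successive applications of Gr\"onwall. Since $\bm\mu$ is assumed to lie in $\C0([0,T];\mathcal P_{1})$, the map $t\mapsto \mathfrak m_{1}(\bm\mu_{t})$ is continuous and therefore bounded on $[0,T]$. Together with $(a)$ this yields the uniform sublinear estimate $|v_{t}(x)|\leq C(1+|x|+\mathfrak m_{1}(\bm\mu_{t}))$, and together with $(b')$ (applied to a large enough compact set containing the relevant trajectories) it yields local Lipschitz continuity in $x$. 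Hence $v_{t}$ generates a unique flow $\Phi_{0,t}$ on $\mathbb R^{n}$, and classical uniqueness for the linear continuity equation driven by such a vector field (see, e.g., the superposition principle in~\cite{AGS}) gives $\bm\mu_{t}=\Phi_{0,t\sharp}\vartheta$. Consequently $\spt\bm\mu_{t}=\Phi_{0,t}(\spt\vartheta)$, and bounding $\spt\bm\mu_{t}$ reduces to bounding $|\Phi_{0,t}(\xi)|$ uniformly in $\xi\in\spt\vartheta\subset B_{r}(0)$.

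\smallskip

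\noindent The first Gr\"onwall step controls the first moment. Writing $m(t)=\mathfrak m_{1}(\bm\mu_{t})=\int|\Phi_{0,t}(\xi)|\d\vartheta(\xi)$ and using $(a)$ under the integral, one obtains a differential inequality of the form $\dot m(t)\leq C\bigl(1+m(t)+m(t)\bigr)$, whence $m(t)\leq (r+\mathrm{const})e^{CT}$ on $[0,T]$ for a suitable constant. The second Gr\"onwall step uses this bound as an input: for any $\xi\in\spt\vartheta$,
\begin{equation*}
|\Phi_{0,t}(\xi)|\leq |\xi|+\int_{0}^{t} C\bigl(1+|\Phi_{0,s}(\xi)|+m(s)\bigr)\d s\leq r+C\bigl(1+\sup_{[0,T]}m\bigr)t+C\int_{0}^{t}|\Phi_{0,s}(\xi)|\d s,
\end{equation*}
which, after a final Gr\"onwall, produces a bound of exactly the nested form $R=re^{CT}\exp\!\bigl(Ce^{CT}T\bigr)$ once the explicit bound on $m$ is plugged in.

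\smallskip

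\noindent The main obstacle is not the Gr\"onwall bookkeeping but the preliminary step of ensuring that the flow representation $\bm\mu_{t}=\Phi_{0,t\sharp}\vartheta$ is available under $(b')$ only. The difficulty is that $V$ is only \emph{locally} Lipschitz, so one must first demonstrate that the relevant trajectories remain in a fixed compact set in order to invoke the Lipschitz estimate, which is a priori circular. This is resolved by truncation: replace $V$ outside a large ball $B_{\rho}(0)\supset K$ by a cut-off $V^{\rho}$ that is globally Lipschitz and sublinear, apply Theorem~\ref{thm:wellposed} to $V^{\rho}$ to get a flow $\Phi^{\rho}$ and an associated solution $\bm\mu^{\rho}_{t}=\Phi^{\rho}_{0,t\sharp}\vartheta$; carry out the two Gr\"onwall estimates above for $\Phi^{\rho}$ to see that $\Phi^{\rho}_{0,t}(\spt\vartheta)\subset B_{R}(0)$ provided $\rho$ is chosen larger than $R$; conclude that $\bm\mu^{\rho}=\bm\mu$ by the local uniqueness for the continuity equation with locally Lipschitz sublinear vector field on the set where the two coincide. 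Once this truncation argument is in place, the quantitative support bound follows, and the lemma is proved.
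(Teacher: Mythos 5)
Your proposal follows essentially the same route as the paper's proof: freeze the field $v_t=V[\bm\mu_t,u(t)]$, identify $\bm\mu_t$ with $\Phi_{0,t\sharp}\vartheta$, and apply Gr\"onwall twice. Two points of divergence are worth noting. First, the order of your two Gr\"onwall steps is not the paper's, and this affects the constant: the paper first derives $|\Phi_{0,t}(\xi)|\leq\bigl(|\xi|+C\int_0^t\mathfrak m_1(\bm\mu_s)\d s\bigr)e^{Ct}$, reads off $\spt\bm\mu_t\subset B_{\rho(t)}(0)$ with $\rho(t)$ equal to this bound at $|\xi|=r$, and only then closes the loop through the elementary observation $\mathfrak m_1(\bm\mu_s)\leq\rho(s)$, running the second Gr\"onwall on $\rho$ itself; that bootstrap is precisely what yields $R=re^{CT}\exp\left(Ce^{CT}T\right)$. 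Your version (bound $m(t)=\mathfrak m_1(\bm\mu_t)$ first by its own Gr\"onwall, then the trajectories) gives a valid uniform bound of the form $\bigl(r+CT(1+\sup m)\bigr)e^{CT}$, but your claim that it reproduces ``exactly the nested form'' of $R$ is not accurate; since only the dependence of $R$ on $r$, $C$, $T$ is ever used, this is a cosmetic rather than substantive defect. Second, your truncation argument for justifying $\bm\mu_t=\Phi_{0,t\sharp}\vartheta$ is circular as written: to conclude $\bm\mu^\rho=\bm\mu$ by uniqueness ``on the set where the two fields coincide'' you would already need to know that $\spt\bm\mu_t\subset B_\rho(0)$, which is what the lemma is trying to prove. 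The clean statement to invoke is uniqueness (via the superposition principle) for the linear continuity equation with a locally Lipschitz, sublinear, frozen field in the class $\C0([0,T];\mathcal P_1)$, which needs no truncation; the paper uses this tacitly. Neither issue undermines the substance of the argument.
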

\begin{proof}
  Fix any \( u\in\tilde{\mathcal U} \) and set \( v_{t} = V[\bm\mu_{t},u(t)] \).
  From \( (a)\), we obtain
  \[
    |v_{t}(x)|\leq C\left(1+|x|+\mathfrak{m}_{1}(\bm\mu_{t})\right)\quad\forall t\in [0,T].
  \]
  Hence the flow \( \Phi \) of \( v_{t} \), being well-defined by the
  assumptions \( (b') \) and \( (c) \), can be estimated as follows:
  \begin{displaymath}
    \left|\Phi_{0,t}(x)\right| \leq |x| +
    C\int_{0}^{t}\left(1+\left|\Phi_{0,s}(x)\right|+\mathfrak{m}_{1}(\bm \mu_{s})\right)\d s.
  \end{displaymath}
  Now the Gronwall's lemma yields
  \begin{displaymath}
    |\Phi_{0,t}(x)|\leq \left(|x|+ C \int_{0}^{s}\mathfrak{m}_{1}(\bm \mu_{s})\d
      s\right)e^{Ct}.
  \end{displaymath}
  This means that \( \spt\vartheta\subset B_{r}(0) \) implies \( \spt \bm\mu_{t}\subset
  B(0,\rho(t)) \) for
  \begin{displaymath}
    \rho(t)\doteq \left(r+ C \int_{0}^{s}\mathfrak{m}_{1}(\bm \mu_{s})\d
      s\right)e^{Ct}.
  \end{displaymath}
  In particular, we have
  \begin{displaymath}
    \mathfrak m_{1}(\bm\mu_{s}) = \int|x|\d{\bm\mu_{s}} \leq \int \rho(s)\d{\bm\mu_{s}} = \rho(s).
  \end{displaymath}
  Therefore,
  \begin{displaymath}
    \rho(t)\leq \left(r+ C \int_{0}^{s}\rho(s)\d
      s\right)e^{CT}.
  \end{displaymath}
  Applying Gronwall's lemma one more time gives
  \begin{displaymath}
    \rho(t)\leq re^{CT} \exp\left(Ce^{CT}t\right),
  \end{displaymath}
  which completes the proof.
\end{proof}

\begin{proofof}{Theorem~\ref{thm:wellposedc}}
Given a compact set \( K\subset \mathbb{R}^{n}  \), we choose \( r \) so that \( K\subset
B_{r}(0) \). Now, it suffices to repeat the proof of Theorem~\ref{thm:wellposed}
with \( \Lambda = \tilde{\mathcal U}\times \mathcal P\left(K\right) \) and \( \mathcal X =
\mathcal P\left(B_{R}(0)\right) \), where \( R \) is given as in
Lemma~\ref{lem:compspt}.
\end{proofof}

\begin{proofof}{Proposition~\ref{prop:discrete}}
Consider the curve $\bm\mu_t^N=\frac{1}{N}\sum_{k=1}^N\delta_{x_k(t)}$, which generates the vector field
\begin{displaymath}
  \mathscr{V}
  \left[u,\bm\mu^N\right](t,x) =
  f_N\left(x, x_1(t),\ldots x_N(t),u(t)\right) \doteq v_t(x).
\end{displaymath}
Since $\mathscr{T}[\vartheta^N,v]_t$, defined in the proof of Theorem~\ref{thm:wellposed}, is the pushforward of $\vartheta^N$ through
the flow $\Phi_{0,t}$ of $v$, we conclude that
\begin{displaymath}
  \mathcal{F}(u,\vartheta^N,\bm\mu^N)_t \doteq \mathscr{T}[\vartheta^N,v]_t = \Phi_{0,t\sharp}\vartheta^{N}= \frac{1}{N}\sum_{k=1}^N\delta_{x_k(t)} = \bm\mu^N_t,
\end{displaymath}
which completes the proof.
\end{proofof}

\begin{lemma}
  \label{lem:aux}
  Let \( \psi\colon\mathbb{R}^{n}\to\mathbb{R}^{n} \) be \( \kappa \)-Lipschitz on a
  compact set \( K\subset \mathbb{R}^{n} \). Then
  \begin{align*}
&    \left|\psi*\mu(x) - \psi*\mu(x')\right|\leq \kappa|x-x'|,\\
&    \left|\psi*\mu(x)-\psi*\mu'(x)\right|\leq \kappa\|\mu-\mu'\|_{K},
  \end{align*}
  for all \( x,x'\in K \) and \( \mu,\mu'\in \mathcal P(K) \).
\end{lemma}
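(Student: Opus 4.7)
The lemma asserts two estimates for the vector-valued convolution $(\psi*\mu)(x)=\int\psi(x-y)\d{\mu(y)}$. My plan is to handle them separately: the spatial estimate by a pointwise Lipschitz argument under the integral sign, and the measure estimate by reducing vector integrals to scalar ones and invoking the Kantorovich dual definition of $\|\cdot\|_K$.

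For the first inequality I would write
$$(\psi*\mu)(x) - (\psi*\mu)(x') = \int\bigl[\psi(x-y) - \psi(x'-y)\bigr]\d{\mu(y)},$$
take the Manhattan norm under the integral, and apply the Lipschitz bound $|\psi(x-y) - \psi(x'-y)|\leq\kappa|x-x'|$ pointwise in $y$. Since $\mu$ has unit mass, this integrates to $\kappa|x-x'|$.

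For the second inequality an additional ingredient is needed because $\psi$ is vector-valued, so the Kantorovich duality (stated in the paper for scalar test functions) does not apply directly to $\int\psi(x-y)\d{(\mu-\mu')(y)}$. My approach is to use the dual representation $|v| = \sup\{\xi\cdot v:\ \xi\in\mathbb{R}^n,\ |\xi|_\infty\leq 1\}$ of the Manhattan norm. For each admissible $\xi$, the scalar function $y\mapsto\xi\cdot\psi(x-y)$ is $\kappa$-Lipschitz, since $|\xi\cdot(\psi(x-y) - \psi(x-y'))|\leq |\xi|_\infty\,|\psi(x-y)-\psi(x-y')|\leq\kappa|y-y'|$. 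Plugging $\kappa^{-1}\,\xi\cdot\psi(x-\cdot)\in\Lip_1$ into the definition $\|\mu-\mu'\|_K = \sup_{\phi\in\Lip_1}\int\phi\d{(\mu-\mu')}$ yields $\bigl|\int\xi\cdot\psi(x-y)\d{(\mu-\mu')(y)}\bigr|\leq\kappa\|\mu-\mu'\|_K$, and taking the supremum over $\xi$ delivers the bound on $|(\psi*\mu)(x)-(\psi*\mu')(x)|$.

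The only delicate point is the domain of the Lipschitz hypothesis: $\psi$ is assumed $\kappa$-Lipschitz on $K$, whereas the convolution evaluates it at $x-y\in K-K$. In the intended application (Lemma~\ref{lem:regularity}) this is harmless because the relevant maps are already Lipschitz on every bounded set thanks to $(\mathbf{A}_1)$--$(\mathbf{A}_2)$; more generally, $\psi$ may be extended to a globally $\kappa$-Lipschitz map by a Kirszbraun-type construction without loss of generality. Hence the main effort lies in setting up the Manhattan/Kantorovich duality correctly, after which both estimates are routine.
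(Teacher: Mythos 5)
Your proof is correct and follows essentially the same route as the paper's: the first inequality is obtained identically, by moving the norm under the integral and using the pointwise Lipschitz bound together with $\mu(K)=1$. For the second inequality the paper simply remarks that $y\mapsto\psi(x-y)/\kappa$ is $1$-Lipschitz and ``recalls the definition of $\|\cdot\|_K$'', silently applying the Kantorovich duality to a \emph{vector-valued} integrand; your scalarization via the $\ell^1$--$\ell^\infty$ duality $|v|=\sup\{\xi\cdot v\colon|\xi|_\infty\leq 1\}$ makes that step rigorous, since it reduces matters to genuine scalar test functions in $\Lip_1$ (and scalar McShane extension preserves the constant, so membership in $\Lip_1(\mathbb R^n)$ is not an issue). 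The caveat you raise about the Lipschitz hypothesis holding on $K$ while the convolution evaluates $\psi$ on $K-K$ is a real imprecision in the lemma's statement, present equally in the paper's own proof, and as you note it is harmless in the only place the lemma is used (Lemma~\ref{lem:regularity}, where the relevant maps are Lipschitz on all bounded sets); the one small quibble is that a Kirszbraun-type extension need not preserve the constant $\kappa$ exactly for maps between $\ell^1$ spaces, though a componentwise McShane extension suffices up to a dimensional factor, which is irrelevant here.
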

\begin{proof}
  The first inequality follows from
  \begin{multline*}
    \left|\int\psi(x-y)\d{\mu(y)}-\int\psi(x'-y)\d{\mu(y)}\right|\leq
    \int\left|\psi(x-y)-\psi(x'-y)\right|\d{\mu(y)}\\
    \leq \int \kappa|x-x'|\d\mu(y) = \kappa |x-x'|.
  \end{multline*}
  To prove the second inequality, it suffices to note that \( y\mapsto \psi(x-y)/\kappa \) is
  \( 1 \)-Lipschitz on \( K \) for any \( x\in K \) and recall the definition of
 $\|\cdot\|_K$.
\end{proof}

\section{Proof of Theorem~\ref{thm:ekeland}}\label{Append2}

The $\epsilon$-Maximium Principle, formulated as Theorem~\ref{thm:ekeland}, is a corollary of the famous Ekeland variational principle \cite{Ekeland}, 
proved by I. Ekeland for free-endpoint optimal control problems. Below, the general scheme of Ekeland's proof remains intact, while the integral constraint $\int_0^S \alpha \, \d t=T$ is treated via the following corollary of the classical Kuhn-Tucker theorem.
\begin{lemma}
\label{lem:kt}
Let $\mathcal K$ be a convex subset of a real vector space, $\phi\colon\mathcal K\mapsto\mathbb{R}$ be convex, and $\psi\colon\mathcal K\to \mathbb{R}$ be affine. 
Suppose that $\inf_{\mathcal K}\phi$ and $\sup_{\mathcal K}\phi$ are finite and there are $x_1,x_2\in \mathcal K$ with $\psi(x_1)>0$ and $\psi(x_2)<0$.
If $x_*$ solves the minimization problem
\[
\min\left\{\phi(x)\;\colon\;\psi(x)=0,\; x\in \mathcal K\right\}
\]
then there exists 
\begin{equation}
   \label{eq:lbounds} 
    \frac{\inf_{\mathcal K} \phi(x)-\sup_{\mathcal K}\phi(x)}{\psi(x_1)}\leq \lambda\leq \frac{\inf_{\mathcal K} \phi(x)-\sup_{\mathcal K}\phi(x)}{\psi(x_2)}
\end{equation}
such that
\begin{equation}
\phi(x) + \lambda\psi(x) \geq \phi(x_*)\quad \forall x\in \mathcal K. 
    \label{eq:kt}
\end{equation}
\end{lemma}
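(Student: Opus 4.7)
\medskip
\noindent\textbf{Proof proposal.} The plan is to reduce the statement to the one-dimensional convex analysis of the \emph{value function}
\[
V(s)\doteq \inf\left\{\phi(x)\;\colon\; x\in \mathcal K,\ \psi(x)=s\right\},\qquad s\in\mathbb{R},
\]
with the convention $V(s)=+\infty$ if the constraint set is empty. First I would verify that $V$ is convex: given $s,s'\in \mathbb R$ in the effective domain of $V$, pick minimizing sequences $\{x_n\}$ and $\{x_n'\}$ in $\mathcal K$ with $\psi(x_n)=s$, $\psi(x_n')=s'$; for any $\theta\in[0,1]$ the point $\theta x_n+(1-\theta)x_n'$ belongs to $\mathcal K$ (convexity of $\mathcal K$) and has $\psi$-value $\theta s+(1-\theta)s'$ (affinity of $\psi$), so $V(\theta s+(1-\theta)s')\leq \theta\phi(x_n)+(1-\theta)\phi(x_n')$ by convexity of $\phi$; passing to the limit yields convexity of $V$. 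Next, putting $s_i\doteq\psi(x_i)$, $i=1,2$, the hypotheses give $V(s_i)\leq\phi(x_i)<\infty$ and $V(s)\geq\inf_{\mathcal K}\phi>-\infty$, so $V$ is real-valued on the interval $[s_2,s_1]\ni 0$ (with $s_2<0<s_1$) and bounded there.

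Now I invoke the standard fact that a convex function on $\mathbb R$ which is finite on an open interval around a point admits a subgradient at that point. Hence there exists $\mu\in\mathbb R$ with $V(s)\geq V(0)+\mu s$ for all $s\in\mathbb R$. Setting $\lambda\doteq -\mu$ and unpacking: for any $x\in\mathcal K$ one has $\phi(x)\geq V(\psi(x))\geq V(0)-\lambda\psi(x)=\phi(x_*)-\lambda\psi(x)$, which is exactly inequality~\eqref{eq:kt}.

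It remains to derive the explicit two-sided bound~\eqref{eq:lbounds}. Apply the subgradient inequality at $s_1$ and $s_2$:
\[
\phi(x_*)=V(0)\leq V(s_1)+\lambda s_1\leq \phi(x_1)+\lambda s_1,\qquad
\phi(x_*)=V(0)\leq V(s_2)+\lambda s_2\leq \phi(x_2)+\lambda s_2.
\]
Bounding $\phi(x_i)\leq \sup_{\mathcal K}\phi$ and $\phi(x_*)\geq \inf_{\mathcal K}\phi$, the first inequality rearranges (dividing by $s_1>0$) to
\[
\lambda\geq \frac{\inf_{\mathcal K}\phi-\sup_{\mathcal K}\phi}{\psi(x_1)},
\]
and the second (dividing by $s_2<0$, which flips the inequality) to
\[
\lambda\leq \frac{\inf_{\mathcal K}\phi-\sup_{\mathcal K}\phi}{\psi(x_2)},
\]
as claimed.

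The only substantive step is the existence of a subgradient of $V$ at $0$; the potential obstacle is that $V$ could a priori take the value $-\infty$, but the uniform lower bound $V\geq \inf_{\mathcal K}\phi>-\infty$ rules this out, and the Slater-type assumption supplies finiteness in a neighborhood of $0$, which is all that is needed. Everything else is bookkeeping with the one-dimensional convex inequalities above.
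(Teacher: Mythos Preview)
Your proof is correct. The convexity of the value function $V$, its finiteness on the interval $[s_2,s_1]$ (by convexity, finiteness at the endpoints propagates to the whole segment), the lower bound $V\geq\inf_{\mathcal K}\phi>-\infty$, and hence the existence of a subgradient at the interior point $0$ are all justified as you state them. The derivation of the two-sided estimate \eqref{eq:lbounds} from the subgradient inequality at $s_1$ and $s_2$ is the same computation the paper carries out.

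The paper takes a different route to \eqref{eq:kt}: it invokes a Kuhn--Tucker theorem (Clarke, Theorem~9.4) as a black box, which produces a possibly abnormal multiplier pair $(\eta,\lambda)$ with $\eta\in\{0,1\}$, and then rules out $\eta=0$ using the Slater-type hypothesis $\psi(x_1)>0>\psi(x_2)$. Your argument is more self-contained: by passing to the one-dimensional value function you replace the abstract multiplier rule with the elementary fact that a real convex function has a subgradient at any interior point of its domain, and the Slater condition enters only to ensure $0$ is interior. In effect you are reproving the normal case of Kuhn--Tucker for a single affine constraint. The paper's approach is shorter if one is willing to cite the multiplier rule; yours makes the dependence on the hypotheses more transparent and avoids the external reference.
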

\begin{proof}
By the Kuhn-Tucker theorem~\cite[Theorem 9.4]{ClarkeBook}, there exists a nontrivial pair $(\eta,\lambda)\in \mathbb{R}^2$ such that $\eta\in\{0,1\}$ and
\[
\left(\eta\,\phi + \lambda\,\psi\right)(x) \geq 
\left(\eta\,\phi + \lambda\,\psi\right)(x_*)\geq
\eta\,\phi(x_*)\quad \forall x\in \mathcal K.
\]
Note that $\eta$ cannot vanish. Indeed, $\eta=0$ implies $\lambda\ne 0$ and
\[
\lambda\,\psi(x)\geq 0 \quad \forall x\in \mathcal K. 
\]
Since $\lambda\psi(x_1)$ and $\lambda\psi(x_2)$ have different signs, we come to a contradiction.

Taken $\eta=1$, we immediately come to~\eqref{eq:kt}. Putting $x=x_1$ and $x=x_2$ in~\eqref{eq:kt}, one has
\[
\lambda \geq \frac{\phi(x_*)-\phi(x_1)}{\psi(x_1)},\quad
\lambda \leq \frac{\phi(x_*)-\phi(x_2)}{\psi(x_2)}.
\]
Thus,~\eqref{eq:lbounds} does  hold, as desired.
\end{proof}


\begin{proofof}{Theorem~\ref{thm:ekeland}}
\textbf{1.} First, let us note that the quantity  $H\left(x^\epsilon(t),\kappa^{-1}p^\epsilon(t),\lambda^\epsilon,a,b\right)$ does not depend on $\kappa$. This follows from the fact that the second equation in the Hamiltonian system is linear, while $H$ is affine in $p$. Thus, it suffices to prove our theorem for $\kappa=1$.

\textbf{2.} We are going to apply the Ekeland variational principle to the map $\mathcal{I}: \, \hat{\mathcal{U}}\mapsto \mathbb{R}$. To that end, we equip $\hat{\mathcal{U}}$ with the topology of $\L\infty$. This turns $\hat{\mathcal{U}}$ into a complete metric space, and guarantees the continuity of $\mathcal{I}$.  Ekeland's principle \cite{Ekeland} says that, if $(\bar\alpha,\bar\beta)$ is $\epsilon$-optimal, then there exists $(\alpha^\epsilon,\beta^{\epsilon})$
such that
\begin{gather}
\mathcal{I}(\alpha^\epsilon,\beta^\epsilon)\leq \mathcal{I}(\bar\alpha,\bar\beta),
\qquad 
\|\alpha^\epsilon-\bar\alpha\|_\infty + \sum_{i=1}^m\|\beta^\epsilon_i-\bar\beta_i\|_\infty
\leq \sqrt{\epsilon},\notag\\
\mathcal{I}(\alpha^\epsilon,\beta^\epsilon) - \mathcal{I}(\alpha,\beta)
\leq \sqrt{\epsilon}\Big(\|\alpha^\epsilon-\alpha\|_\infty + \sum_{i=1}^m\|\beta^\epsilon_i-\beta_i\|_\infty,
\Big)\quad \forall (\alpha,\beta)\in\hat{\mathcal{U}}.
\label{eq:ekeland}
\end{gather}
Thus, assumptions (1) and (2) of Theorem~\ref{thm:ekeland} are fulfilled, and it remains to show that~\eqref{eq:ekeland} implies assumption (3).

\textbf{3.} Given $(\alpha,\beta)\in \hat{\mathcal{U}}$, consider the following variation of $(\alpha^\epsilon,\beta^\epsilon)$:
\begin{displaymath}
  (\alpha^\epsilon_\tau, \beta^\epsilon_\tau) \doteq  (\alpha^\epsilon,\beta^\epsilon)  + \tau \, (\alpha - \alpha^\epsilon, \beta-\beta^\epsilon),\quad \tau\in [0,1].
\end{displaymath}
Note that $(\alpha^\epsilon_\tau, \beta^\epsilon_\tau)\in\hat{\mathcal{U}}$ for any $\tau\in [0,1]$. By inserting this variation into~\eqref{eq:ekeland}, we derive
\[
\mathcal{I}(\alpha^\epsilon,\beta^\epsilon) - \mathcal{I}(\alpha_\tau^\epsilon,\beta_\tau^\epsilon)
\leq \tau\sqrt{\epsilon}
\Big(\|\alpha^\epsilon-\alpha\|_\infty + \sum_{i=1}^m\|\beta^\epsilon_i-\beta_i\|_\infty
\Big)\leq 2\tau\sqrt{\epsilon}.
\]
Thus,
\[
 \frac{{\rm d}}{{\rm d}\tau} \mathcal{I}(\alpha^\epsilon_\tau,\beta^\epsilon_\tau)\Big|_{\tau=0}\geq -2\sqrt{\epsilon}.
\]
It is well-known~(see, e.g., \cite[Proof of Theorem~6.1.1]{BressanPiccoliBook}) that the derivative in the left-hand side exists and equals
\begin{equation*}
  \int_0^S p^\epsilon(t)\cdot\Big(f\big(x^\epsilon(t),\alpha^\epsilon(t),\beta^\epsilon(t)\big) -
  f\big(x^\epsilon(t),\alpha(t),\beta(t)\big)\Big) \d t,
\end{equation*}
where $(x^\epsilon,p^\epsilon)$ satisfies the Hamiltonian system.
Thus, for all $(\alpha,\beta)\in \hat{\mathcal{U}}$, we have
\begin{align}
\label{eq:ekel-eps}
  \int_0^S 
  p^\epsilon(t)\cdot\Big(
   f\big(x^\epsilon(t),\alpha(t), \beta(t)\big)-
  f\big(x^\epsilon(t),\alpha^\epsilon(t),\beta^\epsilon(t)\big)\Big)
  \d t 
  \leq
  2\sqrt{\epsilon}.
\end{align}

\textbf{4.} 
Since the latter inequality holds for any $(\alpha,\beta) \in \hat{\mathcal U}$ it should be also satisfied for maximizers of the following extremal problem
\begin{multline}
  \max_{(\alpha,\beta)\in\hat{\mathcal{U}}}\int_0^S
  \! p^\epsilon(t)\cdot f\big(x^\epsilon(t),\alpha(t),\beta(t)\big)\d t\!=\!
  \max_{\alpha\in\mathcal{A}}\max_{\beta\in\mathcal{B}(\alpha)}\int_0^S\!
  p^\epsilon(t)\cdot f\big(x^\epsilon(t),\alpha(t),\beta(t)\big)\d t,\label{MP-3}
\end{multline}
where
\begin{gather*}
\alpha \in \mathcal{A} \doteq \Big\{\alpha\in \L\infty([0,S]; \mathbb R)\;\colon\; \alpha(t)\in [0,1],\;\int_0^S \alpha(t)\d t = T\Big\},\\
 \beta \in \mathcal B(\alpha) \doteq \Big\{\beta\in \L\infty([0,S]; \mathbb{R}^m)\;\colon\;
  \sum_{i=1}^m|\beta_i (t)| \leq 1-\alpha(t) \Big\}.
\end{gather*}
By Filippov's lemma \cite{Filippov62}, problem~\eqref{MP-3} can be rewritten as
\begin{displaymath}
  \max_{\alpha\in \mathcal A}\int_0^S \, \max_{\|b\|_1\leq 1-\alpha(t)} \, 
  p^\epsilon(t)\cdot f\big(x^\epsilon(t),\alpha(t),b\big)\d t,
\end{displaymath}
or briefly 
\begin{equation}
  \max_{\alpha\in \mathcal A}\int_0^S \sigma\big(t,\alpha(t)\big)\d t,\label{MP-2}
\end{equation}
where
\begin{gather*}
 \sigma(t,\alpha) =
 \alpha \, h_0(t) + (1-\alpha)\max_{1\leq i\leq m} |h_i(t)|,\quad
  h_i(t) \doteq p^\epsilon(t)\cdot g_i(x^\epsilon(t)), \quad j=\overline{0,m}.
\end{gather*}

\textbf{5.} It is easy to see that~\eqref{MP-2} is equivalent to the minimization problem
\begin{gather*}
 \text{Minimize}\quad\phi(\alpha)\doteq-\int_0^S\sigma\big(t,\alpha(t)\big) \d t\quad\text{subject to}\\
  \psi(\alpha)\doteq\int_0^S \alpha(t) \d t - T = 0, \quad \alpha\in\tilde{\mathcal{A}} \doteq\L\infty\left([0,S];[0,1]\right).
\end{gather*}
Let us show that this problem satisfies all the assumptions of Lemma~\ref{lem:kt}. Indeed, for $\alpha_1\equiv 1$ and $\alpha_2\equiv 0$, we have $\psi(\alpha_2)=S-T>0$ and $\psi(\alpha_1)=-T < 0$. Finally,~\eqref{eq:ek-assumptions} implies that $\|h_i\|_\infty$, $i=0,1,\ldots,n$, can be bounded from above by certain constant depending only on $M$. Thus, there exist $\lambda_-$ and $\lambda_+$, depending only on $M$, $T$ and $S$, such that
\[
\lambda_-\leq \frac{\inf_{\tilde{\mathcal{A}}}\phi -\sup_{\tilde{\mathcal{A}}}\phi }{\psi(\alpha_1)},\quad
\frac{\inf_{\tilde{\mathcal{A}}}\phi -\sup_{\tilde{\mathcal{A}}}\phi }{\psi(\alpha_2)}\leq \lambda_+.
\]

\textbf{6.} By Lemma~\ref{lem:kt}, there exists  \( \lambda^\epsilon \in \mathbb R \) such that
\begin{displaymath}
  \max_{\alpha\in \mathcal{A}}\int_0^S\sigma\big(t,\alpha(t)\big) \d t = \max_{\alpha\in \tilde{\mathcal{A}}}
  \left[\int_0^S\sigma\big(t,\alpha(t)\big) \d t + \lambda^\epsilon \, \left(\int_0^S \alpha(t) \d t-T\right)\right].
\end{displaymath}
Since $\displaystyle T=\int_0^S \alpha^\epsilon(t) \d t$, by Filippov's lemma again, we conclude that the right-hand side of the latter identity takes the form
\begin{displaymath}
  \int_0^S \, \max_{a\in [0,1]}\Big(\sigma(s,a) + \lambda^\epsilon\big(a-\alpha^\epsilon(s)\big)\Big) \, {\rm d} s.
\end{displaymath}
Here, the integrand is calculated as follows:
\begin{gather*}
\max_{a\in [0,1]} \Big(a \, \big( h_0(t)+\lambda^\epsilon\big)+(1-a) \, 
\max_{1\leq j\leq m}|h_j(t)|
\Big)-\lambda^\epsilon\, \alpha^\epsilon(t) = \\
\max\big\{
  h_0(t) + \lambda^\epsilon, \max_{1\leq j\leq m}|h_j^\epsilon(t)|\big\} -\lambda^\epsilon\, \alpha^\epsilon(s).
\end{gather*}
Finally, plugging the latter expression into~\eqref{eq:ekel-eps}, we obtain
\begin{equation*}
  \int_0^S \Big[ 
   \max\big\{h_0(t) + \lambda^\epsilon, \max_{1\leq j\leq m}|h_j^\epsilon(t)|\big\} - 
  \alpha^\epsilon(t)\left(h_0(t)+\lambda^\epsilon\right) + \sum_{j=1}^m \beta^\epsilon_j(t)h_j(t)\Big]
  \d t \leq 2\sqrt{\epsilon},
\end{equation*}
as desired.
\end{proofof}

\small{

  \bibliography{references}

  \bibliographystyle{abbrv}

}

\end{document}